\newlist{mylist}{enumerate}{1} 
\setlist[mylist,1]{ 
  label=(\arabic*), 
  ref=(\arabic*)    
}
\newlist{myotherlist}{enumerate}{1}
\setlist[myotherlist,1]{
  label=(\Alph*),   
  ref=(\Alph*)      
}
\def\makeautorefname#1#2{\expandafter\def\csname#1autorefname\endcsname{#2}}
\def\equationautorefname~#1\null{(#1)\null}
\newtheorem{thm}{Theorem}[section]
\newtheorem{cor}{Corollary}[section]
\newtheorem{prop}{Proposition}[section]
\newtheorem{lem}{Lemma}[section]
\theoremstyle{definition}
\newtheorem{defn}{Definition}[section]
\newtheorem{exmp}{Example}[section]
\newtheorem{rem}{Remark}[section]
\let\c@cor=\c@thm
\let\c@prop=\c@thm
\let\c@lem=\c@thm
\let\c@defn=\c@thm
\let\c@exmp=\c@thm
\let\c@ctex=\c@thm
\let\c@rem=\c@thm
\numberwithin{equation}{section}
\title{Conditions for Equivalence of Random Interlacements and Random Walk Reflected off of Infinity}
\author{
   Yao Yu\\
   Peking University
}
\begin{document}

\maketitle
\begin{abstract}
On a transient weighted graph, there are two models of random walk which continue after reaching infinity: random interlacements, and random walk reflected off of infinity, recently introduced in \cite{gs-reflected-rw}. 
We prove these two models are equivalent if and only if all harmonic functions of the underlying graph with finite Dirichlet energy are constant functions, or equivalently, the free and wired spanning forests coincide. In particular, examples where the models are equivalent include $\mathbb{Z}^d$, cartesian products, and many Cayley graphs, while examples that fail the condition include all transient trees. 
\end{abstract}

\section{Introduction}

Consider a locally-finite, connected, transient graph $\mathcal{G}=(\mathcal{VG},\mathcal{EG})$  equipped with conductance function $\mathfrak{c}:\mathcal{EG}\to(0,\infty)$ on the (unoriented) edges of $\mathcal{G}$. For $ x, y \in \mathcal{VG} $, $ x \sim y $ denotes adjacency. The associated transient random walk has a stationary measure $\pi(x) = \sum_{y \sim x} \mathfrak{c}(x, y)<\infty$ and transition kernel $p(x,y)=\mathfrak{c}(x, y)/\pi(x)$.

Two models can be defined based on this transient random walk. The first is \textbf{random interlacements} (abbreviated RI henceforth), originally introduced in \cite{sznitman-interlacement} and extended to transient weighted graphs in \cite{teixeira-interlacement}. Generally speaking, RI is a Poissonian collection of excursions away from $\infty$ with time order, and the corresponding Poisson Point Process measure is defined using the probability measure of transient random walk on $\mathcal{G}$ (see Definition \ref{RI def}). 

The second model is \textbf{random walk reflected off of infinity} (abbreviated RW$^\infty$ henceforth), introduced in \cite{gs-reflected-rw}. This model is a continuous-time process $X^\infty: [0,\infty)\rightarrow \mathcal{VG}\cup\{\infty\}$ that starts from a vertex in $\mathcal{G}$ and  waits at each $x$ for an exponential time with parameter $\mathfrak{m}(x)$, where $\mathfrak{m}:\mathcal{VG}\to(0,\infty)$ is a sufficiently large function (see Theorem \ref{RW^infty props}). The constraints on $\mathfrak{m}$ accelerates RW$^\infty$ as it approaches $\infty$, causing it to hit and bounce off $\infty$ infinitely often, which leads to its recurrence. Let $\tau_\infty$ be the first hitting time of $\infty$ and $\mathbb{P}^\infty_x$ the law of $\{X^\infty_t\}$ starting from $x\in\mathcal{VG}$, then $\mathbb{P}^\infty_x$-a.s. $\{X_t^\infty\}_{t\ge\tau_\infty}$ comprises infinitely many time-ordered excursions away from $\infty$.

These two models are not equivalent in general, because they treat infinity differently. Note that a graph can have multiple ``points at infinity''; a tree, for instance. When facing this situation, the behavior of RW$^\infty$ after it hits $\infty$ depends on which ``point at $\infty$'' it hits (see Proposition 5.3 in \cite{gs-reflected-rw}). Whereas for RI, different ``points at infinity'' are viewed as wired to one point (see Definition \ref{RI def}). Due to this fundamental difference, one only expects them to be equivalent if there is only ``one point at infinity'' in an appropriate sense. Note that the notion of ends in graph theory is not a sufficiently refined notion of ``points at infinity'' (see Remark \ref{end(x)}). 

To bridge the two models, we construct a stochastic process $\{Z_t\}$, called the \textbf{process version of random interlacements} (see Definition~\ref{Z_t def}). Heuristically, $\{Z_t\}$ is obtained by assigning an exponentially distributed holding time with parameter $\mathfrak{m}(x)$ at each visit to vertex $x$ (see Remark~\ref{remark m_* m_**}), and then concatenating all such segments in the (vertex version of) interlacement ordering introduced in \cite{hutchcroft2018interlacements} (see Definition \ref{T(m,n) def} for details). Let $\mathbb{P}^{\mathrm{RI}}$ be the law of $\{Z_t\}$.

\begin{defn}
    Let $(\mathcal{G}, \mathfrak{c})$ be a weighted graph and $\mathfrak{m}: \mathcal{VG}\to(0,\infty)$ be a function such that RI and RW$^\infty$ on this graph can be defined. We say that these two models are equivalent if
    \begin{align*}
        \forall x \in\mathcal{VG},\quad \left\{X_t^\infty\right\}_{t\ge\tau_\infty} \text{ under } \mathbb{P}^\infty_x \text{ and } \left\{Z_t\right\}_{t\ge0} \text{ under } \mathbb{P}^\mathrm{RI} \text{ are identically distributed}.
    \end{align*}
\end{defn}
\begin{rem}\label{equivalence no m}
    The equivalence of $\{X^\infty_t\}_{t\ge\tau_\infty}$ and $\{Z_t\}_{t\ge0}$ depends only on the underlying weighted graph $(\mathcal{G},\mathfrak{c})$, and not on the specific choice of holding time parameter $\mathfrak{m}$. This follows because, given any valid $\mathfrak{m}_0$ establishing equivalence, the property holds for all such $\mathfrak{m}$ under which RI and RW$^\infty$ can be defined (see Remark \ref{remark m_* m_**}). Consequently, we treat model equivalence as a property of $(\mathcal{G},\mathfrak{c})$.
\end{rem}

In this paper we establish necessary and sufficient conditions for equivalence of the two models. Hereafter, the $\mathfrak{c}$-free spanning forest and the $\mathfrak{c}$-wired spanning forest are abbreviated to $\mathfrak{c}$-FSF and $\mathfrak{c}$-WSF, respectively. The $\mathfrak{c}$-FSF (resp.\ $\mathfrak{c}$-WSF) is the limit of $\mathfrak{c}$-weighted spanning forests on large finite subgraphs of $\mathcal{G}$ with free (resp.\ wired) boundary condition (see Definition \ref{FSF WSF def}), which can be constructed from RW$^\infty$ (resp.\ RI) via the Aldous-Broder algorithm (see Theorem 1.14 in \cite{gs-reflected-rw} and Theorem 1.1 in \cite{hutchcroft2018interlacements}).

\begin{defn}
    A function $f:\mathcal{VG}\to\mathbb{R}$ is called discrete harmonic if $f(x)=\sum_{y\sim x}\mathfrak{c}(x,y)f(y)$ for all $x$, and its Dirichlet energy is defined as $\sum_{(u,v)\in\mathcal{EG}} \mathfrak{c}(u,v)[f(u)-f(v)]^2$. The space $\mathbf{HD}(\mathcal{G})$ denotes the set of all harmonic functions with finite Dirichlet energy, while $\mathbb{R}$ denotes the constant functions. 
\end{defn}

\begin{thm}\label{main}
    For a graph $(\mathcal{G},\mathfrak{c})$, the following are equivalent.
    \begin{myotherlist}[leftmargin=*, labelwidth=1em, itemindent=0pt, align=left]
        \item $\mathbf{HD}(\mathcal{G})=\mathbb{R}$. \label{HD = R}
        \item RI and RW$^\infty$ are equivalent.\label{EQ}
        \item $\mathfrak{c}$-FSF$\ \overset{d}{=}\ \mathfrak{c}$-WSF. \label{F = W} 
    \end{myotherlist}
\end{thm}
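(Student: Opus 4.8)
The plan is to establish the cycle of implications \ref{HD = R} $\Rightarrow$ \ref{EQ} $\Rightarrow$ \ref{F = W} $\Rightarrow$ \ref{HD = R}. The last step is the easy half of the classical equivalence \ref{HD = R} $\Leftrightarrow$ \ref{F = W} of Benjamini, Lyons, Peres and Schramm: the free and wired spanning forests of a transient network coincide if and only if every harmonic function of finite Dirichlet energy is constant. Granting this input, it remains to prove \ref{EQ} $\Rightarrow$ \ref{F = W} and the main implication \ref{HD = R} $\Rightarrow$ \ref{EQ}, after which all three conditions are linked.

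The implication \ref{EQ} $\Rightarrow$ \ref{F = W} follows at once from the Aldous--Broder correspondence. Writing $\Phi$ for the measurable functional of a trajectory that records, for each vertex, the oriented edge through which it is first entered, Theorem 1.14 of \cite{gs-reflected-rw} identifies $\Phi(\{X^\infty_t\}_{t\ge\tau_\infty})$ with the $\mathfrak{c}$-FSF and Theorem 1.1 of \cite{hutchcroft2018interlacements} identifies $\Phi(\{Z_t\}_{t\ge0})$ with the $\mathfrak{c}$-WSF. If the two trajectories are identically distributed then so are their images under $\Phi$, giving $\mathfrak{c}$-FSF $\overset{d}{=}\mathfrak{c}$-WSF.

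For \ref{HD = R} $\Rightarrow$ \ref{EQ} I would reduce the comparison of the two processes to their entrance behaviour at infinity. Both $\{X^\infty_t\}_{t\ge\tau_\infty}$ and $\{Z_t\}$ decompose into excursions away from $\infty$, and inside $\mathcal{VG}$ each excursion evolves as the underlying transient walk with the common holding-time parameter $\mathfrak{m}$; hence the only possible discrepancy is in the law by which successive excursions are launched from $\infty$ and glued together. Concretely I would fix an arbitrary finite $K\subset\mathcal{VG}$ and show that the trace on $K$ of the two processes --- the chain on $K$ obtained by collapsing excursions that leave $K$ --- has the same law, and then upgrade this to equality of the full processes by a consistency argument along an exhaustion $K\uparrow\mathcal{VG}$. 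On the interlacement side the description of \cite{hutchcroft2018interlacements} gives that excursions enter $K$ independently according to the normalized equilibrium (wired harmonic) measure $\bar e_K=e_K/\mathrm{cap}(K)$; on the RW$^\infty$ side the entrance from $\infty$ instead reflects the finer boundary structure, so that both the entrance measure and the correlations between consecutive excursions are controlled by the different directions to infinity.

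The crux, and the step I expect to be the main obstacle, is to show that \ref{HD = R} forces these two entrance mechanisms to coincide. The directions to infinity to which RW$^\infty$ is sensitive are exactly parametrized by the harmonic functions of finite Dirichlet energy, and the gap between the free RW$^\infty$ entrance behaviour and the wired interlacement one can be expressed through the difference of the free and wired equilibrium potentials of $K$, itself a harmonic function of finite Dirichlet energy vanishing on $K$. When $\mathbf{HD}(\mathcal{G})=\mathbb{R}$ this difference is constant, hence identically zero, the RW$^\infty$ entrance measure collapses to $\bar e_K$, and its excursions decorrelate, so the trace on $K$ coincides with that of the interlacement process. Making this precise requires a potential-theoretic identification of the RW$^\infty$ entrance law from the construction in \cite{gs-reflected-rw} together with a quantitative link between the free--wired discrepancy and $\mathbf{HD}(\mathcal{G})$; by Remark \ref{equivalence no m} it suffices to carry this out for a single convenient choice of $\mathfrak{m}$.
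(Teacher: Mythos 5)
Your overall architecture is the same as the paper's: \ref{EQ}$\Rightarrow$\ref{F = W} via the Aldous--Broder functional (the paper needs one extra patch you elide, its Lemma \ref{FSF}, because Theorem 1.14 of \cite{gs-reflected-rw} identifies the forest generated by $X^\infty|_{[0,\infty)}$ started at a \emph{vertex} with the FSF, and transferring this to $X^\infty|_{[\tau_\infty,\infty)}$ uses the strong Markov property at the first return to a large finite set); the equivalence \ref{HD = R}$\Leftrightarrow$\ref{F = W} cited from \cite{lyons-peres}; and \ref{HD = R}$\Rightarrow$\ref{EQ} through excursion decomposition, identification of the entrance law on a finite $K$ with $\tilde{\mathrm{e}}_K$, excursion independence, and a limit along an exhaustion. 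However, at the step you yourself flag as the crux, your stated justification is flawed. The difference of the free and wired equilibrium potentials of $K$ is harmonic only on $\mathcal{VG}\setminus K$ and vanishes on $K$, so it does \emph{not} lie in $\mathbf{HD}(\mathcal{G})$, and \ref{HD = R} cannot be applied to it to conclude it is constant. More importantly, the quantity that must actually be computed is the boundary value at infinity of the energy-minimizing extension, namely that $\lim_{n\to\infty} h^y_K(X_n)=\tilde{\mathrm{e}}_K(y)$ a.s.\ (Proposition \ref{HD to lim}). Uniqueness of currents (Theorem \ref{equis for F=W}) alone does not hand you this identity: the paper extracts it through a genuine electrical computation --- symmetry of the Green function and the inversion $G_K\Delta=\mathrm{id}$ on net currents supported in $K$ (Lemma \ref{G symm}), the normalization $\sum_j G(k_t,k_j)\mathrm{e}_K(k_j)=1$, the identification $h^{k_1}_K=\psi$ via equality of free and wired effective conductances, and the vanishing of the total net current $\sum_t\Delta\psi(k_t)=0$. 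Nothing in your sketch supplies a substitute for this computation, and your ``difference of potentials is in $\mathbf{HD}$, hence zero'' shortcut would fail as written.

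A second, smaller gap: ``the entrance measure collapses to $\bar{e}_K$, and its excursions decorrelate'' is a conclusion, not an argument. A priori RW$^\infty$ is not Markov at $\infty$ (Proposition 5.3 in \cite{gs-reflected-rw}), so independence of successive excursions does not follow merely from the unconditional entrance law being $\tilde{\mathrm{e}}_K$; one must show the entrance law \emph{conditioned on the entire pre-$\tau_\infty$ path} is a.s.\ the constant $\tilde{\mathrm{e}}_K(y)$. The paper proves exactly this (Lemma \ref{strong Markov}) by running the uniformly integrable martingale $\phi(\hat{X}_M)$ along the discrete skeleton and invoking Proposition \ref{HD to lim}, and it separately needs the strong Markov property of the interlacement process (Lemma \ref{Z SM}, built on Lemma \ref{R SM}). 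Finally, your trace-on-$K$ consistency scheme is a workable variant of the paper's truncations $X^n,Z^n$, but note that the passage to the limit is not free of content either: the paper's Lemma \ref{d(Z,Z^n)} requires an absolute-continuity argument to show $Z$ is a.s.\ locally constant at each fixed time before the Prokhorov-distance argument closes the proof.
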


Equivalence between \ref{F = W} and \ref{HD = R} has been proven in \cite{lyons-peres} (see Theorem \ref{equis for F=W}), so the proof of this theorem primarily lies in establishing $\ref{HD = R}\Rightarrow\ref{EQ}\Rightarrow\ref{F = W}$. Regarding the three conditions, positive examples include $\mathbb{Z}^d$, cartesian products, and many Cayley graphs, while negative examples include transient trees. 

\begin{rem}\label{P^inf_inf rmk}
    By Theorem \ref{RW^infty props} (iv) and Lemma \ref{VG SM} (Lemma 3.14 in \cite{gs-reflected-rw}), RW$^\infty$ has the Markov and the strong Markov properties at any stopping time $\tau$ for which $X_\tau^\infty$ is a.s. a vertex of $\mathcal{G}$. But in general it is not Markov at $\infty$ (see Proposition 5.3 in \cite{gs-reflected-rw}). In this paper, under assumption \ref{HD = R} we prove a restricted version of the strong Markov property of RW$^\infty$ at $\infty$ (see Lemma \ref{strong Markov}), so we can rigorously define $\{X^\infty_t\}$ started from $\infty$. Intuitively speaking, it says that the behavior of the walk after first hitting $\infty$ is independent of the starting point and the behavior of the process before it hits $\infty$. Let the corresponding probability measure be $\mathbb{P}^\infty_\infty$, and then \ref{EQ} can be restated as: 
   \begin{align*}
        \left\{X_t^\infty\right\}_{t\ge0} \text{ under } \mathbb{P}^\infty_\infty \text{ and } \left\{Z_t\right\}_{t\ge0} \text{ under } \mathbb{P}^\mathrm{RI} \text{ are identically distributed}.
    \end{align*}
\end{rem}

Since we can prove that the process version of RI is always strong Markov (Lemma \ref{Z SM}), a direct application of Theorem \ref{main} will be:
\begin{cor}\label{SM}
    For a graph $(\mathcal{G},\mathfrak{c})$ that satisfies any (and hence all) of the conditions in Theorem \ref{main}, RW$^\infty$ on this graph has the strong Markov property. 
\end{cor}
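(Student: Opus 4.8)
The plan is to reduce the strong Markov property of RW$^\infty$ to that of the process version of random interlacements $\{Z_t\}$, and then transfer it across the equivalence furnished by Theorem \ref{main}. The key observation driving the argument is that the strong Markov property is a statement about the \emph{law} of a process, and hence is preserved under equality in distribution; this is what lets us import strong Markovianity from $\{Z_t\}$ to RW$^\infty$.

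First I would fix an arbitrary stopping time $\tau$ for $\{X^\infty_t\}$ and split the analysis according to the value of $X^\infty_\tau$. On the event $\{X^\infty_\tau\in\mathcal{VG}\}$ the strong Markov property is already available from Theorem \ref{RW^infty props} (iv) together with Lemma \ref{VG SM}, so no new input is required there. The entire content of the corollary therefore concentrates on the event $\{X^\infty_\tau=\infty\}$, which is precisely the regime in which RW$^\infty$ fails to be Markov in general.

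On this event I would first invoke the restricted strong Markov property at $\infty$ (Lemma \ref{strong Markov}), valid under hypothesis \ref{HD = R}, to make sense of the process restarted from $\infty$ and to identify its law with $\mathbb{P}^\infty_\infty$; this also shows that the post-$\tau_\infty$ trajectory is independent of the pre-$\tau_\infty$ history and of the starting vertex. Next I would use the restated form of equivalence in Remark \ref{P^inf_inf rmk}: under condition \ref{EQ} (equivalently \ref{HD = R}), the processes $\{X^\infty_t\}_{t\ge0}$ under $\mathbb{P}^\infty_\infty$ and $\{Z_t\}_{t\ge0}$ under $\mathbb{P}^{\mathrm{RI}}$ are identically distributed. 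Since $\{Z_t\}$ is strong Markov (Lemma \ref{Z SM}) and strong Markovianity passes through equality in law, the process $\{X^\infty_t\}$ under $\mathbb{P}^\infty_\infty$ is strong Markov as well; in particular it is strong Markov at stopping times that land at $\infty$. Gluing this with the vertex-time strong Markov property from the first case yields the claim for all stopping times, and hence for RW$^\infty$ started from any vertex.

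The main obstacle I anticipate is the bookkeeping at $\infty$: verifying that the equality in distribution of the post-$\tau_\infty$ processes is strong enough to transfer the strong Markov property at exactly those stopping times where $X^\infty_\tau=\infty$, and reconciling the natural filtration of $\{X^\infty_t\}$ with that of $\{Z_t\}$ so that the two notions of ``stopping time'' and ``conditioning on the past'' align. Once Lemma \ref{strong Markov} secures a clean started-from-$\infty$ law $\mathbb{P}^\infty_\infty$, this reconciliation should be routine, and the remaining steps are formal.
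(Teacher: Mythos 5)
Your proposal is correct and takes essentially the same route as the paper: the paper's proof likewise transfers the strong Markov property from $\{Z_t\}$ (Lemma \ref{Z SM}) to $\{X_t^\infty\}$ under $\mathbb{P}^\infty_\infty$ via the equivalence of Theorem \ref{main} (in the form of Remark \ref{P^inf_inf rmk}), and then combines Lemma \ref{strong Markov} (independence of the post-$\tau_\infty$ trajectory from the past and the starting vertex) with Lemma \ref{VG SM} to handle walks started from a vertex. Your explicit case split on whether $X^\infty_\tau$ is a vertex or $\infty$ is simply a spelled-out version of the same two-step argument.
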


The rest of the paper is organized as follows. 
Section 2 provides detailed descriptions of the two models and give a rigorous definition of the process version of RI. 
Section 3 is devoted to prove $\ref{EQ}\Rightarrow\ref{F = W}$. The key argument applies the Aldous–Broder algorithm: since RI and RW$^\infty$ are equivalent under \ref{EQ}, they must generate the same spanning forest law, which gives \ref{F = W}. 
Section 4 cites previous work on \ref{F = W}$\Rightarrow$\ref{HD = R} and gives proof of \ref{HD = R}$\Rightarrow$\ref{EQ}. The latter consists of two key steps: in Section 4.2, we use an electrical network construction to show that, under assumption \ref{HD = R}, both models induce the same hitting distribution on any finite set $K\subset\mathcal{VG}$; in Section 4.3 we establish Markov-type properties to obtain excursion independence. The electrical network construction is the most conceptually novel part of the proof. In Section 4.4 we combine this independence with the excursion similarity to deduce the equivalence of the truncated versions of the two models. The final result is then obtained by passing to the limit. In Section 4.5 we prove Corollary \ref{SM}. 
Finally, Section 5 illustrates Theorem \ref{main} with several examples.

\section{Preliminaries}

Let $\left\{X_n\right\}_{n\ge 0}$ be the random walk on $(\mathcal{G},\mathfrak{c})$, with transition kernel $p(x,y)=\mathfrak{c}(x, y)/\pi(x) = \mathfrak{c}(y, x)/\pi(x)$. Recall $(\mathcal{G},\mathfrak{c})$ satisfies that random walk on $\mathcal{G}$ is transient. We use $\mathbb{P}^{\mathrm{RW}}_x$ for the probability measure of this random walk starting from $x$, and we define $\tau_y=\min\{n\ge 0\mid X_n=y\}$.

\subsection{Random interlacements on \texorpdfstring{$\mathcal{G}$}{G}}
We refer to \cite{drs-interlacement-book} for more indepth introduction. 

Let $W_+$ and $W$ be the sets of infinite and doubly infinite paths in $\mathcal{VG}$, endowed with Borel sigma-algebra $\mathcal{W}$ and $\mathcal{W}_+$, respectively. 
Define an equivalence relation on $W$ by: 
\[
    \forall w,w'\in W, w\sim_{\mathrm{time}} w' \iff \exists k\in \mathbb{Z}, w(n)=w'(n+k) \text{ for all } n\in\mathbb{Z}.
\]
Let $W^*=W/\sim_{\mathrm{time}}$ be the set of doubly infinite trajectories modulo time shift, interpreted as excursions that comes from $\infty$ and goes to $\infty$. Denote by $\pi^*:W\to W^*$ the canonical projection.
The sigma-algebra $\mathcal{W}^*$ on $W^*$ is defined as:
\begin{equation}\label{W^* def}
A \in \mathcal{W}^* \iff (\pi^*)^{-1}(A) \in \mathcal{W}.
\end{equation}
In particular, for any finite $K\subset \mathcal{VG}$, define: 
\begin{align}\label{W_K^* def}
    W_K = \{ w \in W : X_n(w) \in K \text{ for some } n \in \mathbb{Z} \} \in\mathcal{W}, 
    \quad W^*_K=\pi^*(W_K).
\end{align}

\begin{defn}\label{capacity def}
    For any finite $K\subset \mathcal{VG}$ and $y\in K$, the equilibrium measure on $K$ is
    \[
        \mathrm{e}_K(y)=\pi(y)\mathbb{P}^{\mathrm{RW}}_y[\widetilde{H}_K=\infty], \quad \forall y\in K,
    \]where stopping time $\widetilde{H}_K=\inf\left\{n\ge 1\mid X_n\in K\right\}$. And the normalized equilibrium measure is
    \[
        \tilde{\mathrm{e}}_K(y)=\frac{\mathrm{e}_K(y)}{\mathrm{cap}(K)},\ \text{where }\mathrm{cap}(K)=\sum_{y\in K} \mathrm{e}_K(y).
    \]
\end{defn}

Now we can define a measure $\mathbb{Q}_K$ on $(W, \mathcal{W})$. For any $A, B \in \mathcal{W}_+$ and $y \in K$,
\[
    \mathbb{Q}_K[(X_{-n})_{n \geq 0} \in A, \, X_0 = y, \, (X_n)_{n \geq 0} \in B] = \mathbb{P}^{\mathrm{RW}}_y[A\mid\widetilde{H}_K = \infty ]\cdot \mathrm{e}_K(y) \cdot \mathbb{P}^{\mathrm{RW}}_x[B].
\]

\begin{thm}[Theorem 2.1 in \cite{teixeira-interlacement}]
    There exists a unique sigma-finite measure $\nu$ on $(W^*, \mathcal{W}^*)$ which satisfies for all finite $K \subset \mathcal{VG}$,
    \[
        \forall A \in \mathcal{W}^*, A \subset W^*_K: \quad \nu(A) = \mathbb{Q}_K[(\pi^*)^{-1}(A)].
    \]
    Here $\mathcal{W}^*$ and $W^*_K$ are defined in \eqref{W^* def} and \eqref{W_K^* def}.
\end{thm}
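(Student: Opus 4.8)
The plan is to prove existence via a compatibility-and-extension argument and to obtain uniqueness essentially for free. First I would record the two structural facts behind uniqueness: every class $w^*\in W^*$ meets some finite set, so for any exhaustion $K_1\subseteq K_2\subseteq\cdots$ of $\mathcal{VG}$ by finite sets we have $W^*=\bigcup_n W^*_{K_n}$; and $\mathbb{Q}_{K}[W]=\sum_{y\in K}\mathrm{e}_K(y)=\mathrm{cap}(K)<\infty$, so any $\nu$ with the stated property assigns finite mass to each $W^*_{K_n}$ and is therefore $\sigma$-finite. Two measures satisfying the defining identity agree on $\mathcal{W}^*\cap W^*_{K_n}$ for every $n$, hence on all of $\mathcal{W}^*$ by taking increasing limits; this settles uniqueness as soon as existence is established.

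Next I would check that each $\mathbb{Q}_K$ descends to a genuine measure on $W^*_K$. Because the backward trajectory is conditioned on $\{\widetilde{H}_K=\infty\}$, the defining formula forces $X_{-n}\notin K$ for all $n\ge 1$, so $X_0$ is the first entrance time into $K$. Consequently $\pi^*$ is injective on the support of $\mathbb{Q}_K$, and $\nu_K:=\mathbb{Q}_K\circ(\pi^*)^{-1}$ is a well-defined finite measure on $(W^*_K,\mathcal{W}^*\cap W^*_K)$ with total mass $\mathrm{cap}(K)$.

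The heart of the argument is the \emph{compatibility} relation: for finite $K\subseteq K'$, the restriction of $\nu_{K'}$ to $W^*_K$ equals $\nu_K$. I would prove this by re-centering. Take a trajectory distributed as $\mathbb{Q}_{K'}$ and restrict to the event that it hits $K$; since its time-$0$ point is the first entrance into $K'$ and $K\subseteq K'$, the trajectory avoids $K$ at all negative times, so it has a well-defined first entrance time $D\ge 0$ into $K$. Shifting time by $D$ produces a canonical representative with first $K$-entrance at $0$, so it suffices to show this shifted law equals $\mathbb{Q}_K$. The forward part from the $K$-entrance is ordinary random walk on both sides by the strong Markov property, and so matches automatically; all the content is in the joint law of the entrance vertex $y$ and the backward excursion, which one checks avoids $K$ for all strictly negative indices. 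Here I would invoke reversibility, $\mathfrak{c}(x,y)=\pi(x)p(x,y)=\pi(y)p(y,x)$, to time-reverse the backward segment: reversing the $\mathbb{Q}_{K'}$ backward path together with the finite piece run from the first $K'$-hit to the first $K$-hit, and reweighting by $\pi$, I expect the $\mathrm{e}_{K'}$-weights to telescope through a last-exit decomposition into exactly $\mathrm{e}_K(y)\,\mathbb{P}^{\mathrm{RW}}_y[\,\cdot\mid\widetilde{H}_K=\infty]$. This reversibility and last-exit bookkeeping, summed over the intermediate vertex $z$ and over the value of $D$, is the main obstacle; everything else is formal.

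Finally, with compatibility in hand I would define $\nu$ by $\nu(A):=\lim_n\nu_{K_n}(A\cap W^*_{K_n})$ along any exhaustion. Compatibility makes the sequence nondecreasing and the limit independent of the chosen exhaustion, and it yields countable additivity of $\nu$ via the increasing sets $W^*_{K_n}\uparrow W^*$. One further application of compatibility shows that the resulting $\nu$ satisfies $\nu(A)=\mathbb{Q}_K[(\pi^*)^{-1}(A)]$ for \emph{every} finite $K$ and every $A\subseteq W^*_K$, not merely along the exhaustion. Combined with the uniqueness step, this gives the asserted existence and uniqueness of $\nu$.
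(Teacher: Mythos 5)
Your proposal is correct and follows essentially the same route as the source the paper cites for this statement (the paper itself gives no proof, quoting Theorem~2.1 of \cite{teixeira-interlacement}, whose argument goes back to Sznitman): uniqueness via the exhaustion $W^*=\bigcup_n W^*_{K_n}$ with $\nu(W^*_{K_n})=\mathbb{Q}_{K_n}[W]=\mathrm{cap}(K_n)<\infty$, and existence via the push-forwards $\nu_K=\mathbb{Q}_K\circ(\pi^*)^{-1}$ (using that $\mathbb{Q}_K$ charges only trajectories whose first entrance to $K$ is at time $0$, so $\pi^*$ is injective there) together with the compatibility $\nu_{K'}|_{W^*_K}=\nu_K$ for $K\subseteq K'$, proved by re-centering at the first entrance into $K$ and time-reversing with $\pi(x)p(x,y)=\pi(y)p(y,x)$ so that the $\mathrm{e}_{K'}$-weights collapse to $\mathrm{e}_K(y)\,\mathbb{P}^{\mathrm{RW}}_y[\,\cdot\mid\widetilde{H}_K=\infty]$. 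The one step you leave as a sketch --- the reversibility/last-exit bookkeeping in the compatibility relation --- is precisely the computational heart of the cited proof, and you have correctly identified both its location and its mechanism.
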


\begin{defn}\label{RI def}
    Let $\lambda$ be the Lebesgue measure on $\mathbb{R}$. Let 
    $(\Omega, \mathcal{A}, \mathbb{P}^\mathrm{RI})$ be the probability space where:
    \begin{itemize}
        \item $\Omega \text{ is the set of locally finite point measures where } \boldsymbol{\omega} := \sum_{n \geq 0} \delta_{(w_n^*, u_n)} \text{ on } W^* \times \mathbb{R}_+$,  
        such that $\boldsymbol{\omega}(W_K^* \times [0, u]) < \infty 
        \text{ for any finite } K \subset \mathcal{VG}, u \geq 0$;
        \item $\mathcal{A}$ is the sigma-algebra generated by the evaluation maps $\boldsymbol{\omega} \mapsto \boldsymbol{\omega}(D), \  D \in \mathcal{W}^* \otimes \mathcal{B}(\mathbb{R}_+)$; 
        \item $\mathbb{P}^\mathrm{RI}$ is the probability measure under which $\boldsymbol{\omega}$ is the Poisson Point Process with intensity $\nu \otimes \lambda$.
    \end{itemize}
    The random element of $(\Omega, \mathcal{A}, \mathbb{P}^\mathrm{RI})$ is called the \textbf{random interlacement point process}. 
\end{defn}

In what follows, we will consider the probability measure of RI and conditionally independent Exp$(1)$ random variables $\left\{\tilde{t}(m,n)\right\}_{m\ge 0, n\in\mathbb{Z}}$ conditioned on RI (see Definition \ref{T(m,n) def}). We slightly abuse the notation by using $\mathbb{P}^\mathrm{RI}$ for this probability measure as well.

\subsection{Process version of RI}
We now define a continuous-time stochastic process $\{Z_t\}$ based on RI. 
Heuristically, we assign an exponential waiting time (governed by $\mathfrak{m}$) to each vertex visited in every excursion. These time segments are then concatenated according to a certain order, which is decided by the sequential order of vertices within each single excursion and the temporal order between different excursions. In \cite{hutchcroft2018interlacements}, this order is denoted as the interlacement ordering. 

Since $\mathcal{G}$ is locally finite, let $\{\mathcal{G}_n\}_{n\ge 1}$ be an increasing family of finite, connected subgraphs such that $\mathcal{G}=\cup_{n \geq 1}\mathcal{G}_n$. We first show that RI viewed as a stochastic process has the strong Markov property. With a slight abuse of notation, we also use $\mathbb{P}^\mathrm{RI}$ to denote the probability governing the process $R$ defined below. 

\begin{lem}\label{R SM}
    For RI, consider $\boldsymbol{\omega} = \sum_{m \geq 0} \delta_{(w_m^*, u_m)}\in \Omega$. Define a stochastic process by $R_t=\sum_{u_m\le t}\delta_{(w_m^*,u_m)}$ and define $\mathcal{F}_t$ as the sigma-algebra generated by $\{R_s\}_{s\le t}$. Then $R$ has the strong Markov property, i.e., for any finite stopping time $\eta$, $\{\mathbb{E}^\mathrm{RI}[R_{t+\eta}-R_\eta\mid\mathcal{F}_\eta]\}_{t\ge 0}$ and $\{R_t\}_{t\ge 0}$ coincide in law. 
\end{lem}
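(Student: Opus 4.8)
The plan is to view $\{R_t\}_{t\ge 0}$ as the running accumulation of a Poisson point process on $W^*\times\mathbb{R}_+$ in which the second coordinate is time, and to read off the strong Markov property from two structural facts about such a process: independence of its restrictions to disjoint time-slabs, and translation invariance of the intensity $\nu\otimes\lambda$ in the time coordinate (Lebesgue measure being translation invariant). Concretely, I read the stated property as the assertion that, conditionally on $\mathcal{F}_\eta$, the shifted increment process $\{R_{t+\eta}-R_\eta\}_{t\ge 0}$ is independent of $\mathcal{F}_\eta$ and has the same law as $\{R_t\}_{t\ge 0}$. I would prove this first for deterministic times and then bootstrap to stopping times by dyadic approximation.

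First, I establish the plain Markov property at a fixed time $s$. Because $\boldsymbol{\omega}$ is Poisson with intensity $\nu\otimes\lambda$, its restrictions to the disjoint Borel sets $W^*\times[0,s]$ and $W^*\times(s,\infty)$ are independent Poisson point processes. The restriction to $W^*\times[0,s]$ generates $\mathcal{F}_s$, since the atoms with time-label $\le s$ together with their labels are exactly recoverable from $\{R_u\}_{u\le s}$. Under the time shift $(w^*,u)\mapsto(w^*,u-s)$, the restriction to $W^*\times(s,\infty)$ is again Poisson with intensity $\nu\otimes\lambda$, by translation invariance of $\lambda$. Writing $R_{s+t}-R_s=\sum_{s<u_m\le s+t}\delta_{(w_m^*,u_m)}$ and applying this shift, I obtain a process in $t$ that is independent of $\mathcal{F}_s$ and satisfies $\{R_{s+t}-R_s\}_{t\ge 0}\overset{d}{=}\{R_t\}_{t\ge 0}$. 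This is the desired statement with $\eta$ replaced by $s$.

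Next I pass to a finite stopping time $\eta$. The map $t\mapsto R_t$ is right-continuous: on any region $W_K^*\times[0,T]$ there are a.s.\ only finitely many atoms (the local finiteness in Definition \ref{RI def}), so for $u$ decreasing to $v$ the slab $W^*\times(v,u]$ eventually contains no atom of that region and hence $R_u\to R_v$. Put $\eta_k=\lceil 2^k\eta\rceil\,2^{-k}\downarrow\eta$; each $\eta_k$ is a stopping time valued in $\{j2^{-k}:j\ge 0\}$ with $\{\eta_k=j2^{-k}\}\in\mathcal{F}_{j2^{-k}}$. Decomposing over these events and applying the fixed-time Markov property on each, the process $\{R_{t+\eta_k}-R_{\eta_k}\}_{t\ge 0}$ is independent of $\mathcal{F}_{\eta_k}$ and equal in law to $\{R_t\}_{t\ge 0}$; since $\mathcal{F}_\eta\subset\mathcal{F}_{\eta_k}$, it is in particular independent of $\mathcal{F}_\eta$. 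Letting $k\to\infty$, right-continuity gives $R_{t+\eta_k}-R_{\eta_k}\to R_{t+\eta}-R_\eta$ for every $t$, and both the equality in law and the independence of $\mathcal{F}_\eta$ persist under this a.s.\ limit, yielding the claim for $\eta$.

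The main difficulty is measure-theoretic rather than probabilistic: since $\nu$ is only $\sigma$-finite, $R$ does not live in a Polish space of finite point measures, so the ``restriction to disjoint slabs'' and the limiting arguments cannot be run globally. I would make them rigorous region by region, working with the finite point processes obtained by restricting each $R_t$ to $W_K^*\times\mathbb{R}_+$ for an exhausting sequence of finite sets $K$; there every statement above is genuinely finite-dimensional, and the local finiteness condition guarantees the family is consistent, so that one may take the projective limit over $K$. A minor additional point is that conditioning on $\mathcal{F}_\eta$ rather than $\mathcal{F}_{\eta+}$ is harmless, because the independent-increment structure forces the two to yield the same conditional law of the post-$\eta$ process.
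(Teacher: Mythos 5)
Your proof is correct, but it reaches the conclusion by a genuinely different route than the paper. The paper also exhausts $W^*$ by the finite-capacity regions $W^*_{\mathcal{VG}_n}$, but at the core it treats each restricted process $R^n$ as a compound Poisson process and \emph{cites} the strong Markov property of such right-continuous processes as a black box, then recovers the statement for $R$ by an almost-sure limiting argument using an ad hoc metric $d_{\mathcal{M}}$ on point-measure-valued paths (incidentally, that metric is misstated in the paper --- the indicator should be $\mathbf{1}_{\mu_1^n\neq\mu_2^n}$ rather than $\mathbf{1}_{\mu_1^n=\mu_2^n}$). You instead derive the strong Markov property from first principles: independence of the Poisson process over disjoint time-slabs plus translation invariance of $\lambda$ gives the fixed-time Markov property, and the classical dyadic discretization $\eta_k=\lceil 2^k\eta\rceil 2^{-k}\downarrow\eta$ combined with right-continuity (valid region by region thanks to local finiteness) upgrades it to arbitrary finite stopping times; $\sigma$-finiteness of $\nu$ is then handled by the same exhaustion, but via consistency of the restricted laws and the fact that the evaluation $\sigma$-algebra $\mathcal{A}$ is generated by the sets $W^*_{\mathcal{VG}_n}\times B$, rather than via a metric. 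What each approach buys: the paper's argument is shorter modulo the cited black box, while yours is self-contained and sidesteps a subtlety the paper's first display glosses over --- namely that $\eta$ is a stopping time for the full filtration $\{\mathcal{F}_t\}$ but not obviously for the restricted filtration $\{\mathcal{F}^n_t\}$, so conditioning on $\mathcal{F}_\eta$ versus $\mathcal{F}^n_\eta$ in \eqref{R^n} requires exactly the kind of independence bookkeeping you make explicit by decomposing over $\{\eta_k=j2^{-k}\}\in\mathcal{F}_{j2^{-k}}$ and noting $\mathcal{F}_\eta\subset\mathcal{F}_{\eta_k}$. Your closing remark that independence and equality in law persist under the a.s.\ limits, and that conditioning on $\mathcal{F}_\eta$ rather than $\mathcal{F}_{\eta+}$ is harmless, correctly disposes of the remaining boundary issues, so the proposal stands as a complete alternative proof.
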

\begin{proof}
Recall the definition of $W_{\mathcal{VG}_n}^*$ in \eqref{W^* def}. Let $R^n_t=\sum_{u_m\le t, w_m^*\in W_{\mathcal{VG}_n}}\delta_{(w_m^*,u_m)}$ and let $\mathcal{F}^n_\eta$ be the sigma-algebra generated by $\{R_s^n\}_{s\le t}$. 
Since $\mathrm{cap}(\mathcal{VG}_n)<\infty$, $R^n_t$ is a compound Poisson process and hence right-continuous; therefore, it has the strong Markov property. Additionally, $R-R^n$ corresponds to $W^*\setminus W^*_{\mathcal{VG}_n}\times [0,\infty)$ means it is independent of $R^n$, so we have

\begin{equation}\label{R^n}
    \{\mathbb{E}^\mathrm{RI}[R^n_{\eta+t}-R^n_\eta\mid\mathcal{F}_\eta]\}_{t\ge0}
    \overset{d}{=}
    \{\mathbb{E}^\mathrm{RI}[R^n_{\eta+t}-R^n_\eta\mid\mathcal{F}^n_\eta]\}_{t\ge0}
    \overset{d}{=}\{R^n_t\}_{t\ge0}. 
\end{equation}

Let $\mathcal{M}$ be the space of all at most countable sums of Dirac measures on $W^*\times\mathbb{R}_+$. For any $\mu\in\mathcal{M}$, define the induced measure on $\mathcal{VG}_n$ as $\mu^n(dx)=\mathbf{1}_{x\in W^*_{\mathcal{VG}_n}\times\mathbb{R}_+}\mu(dx)$, and then we can define a metric on $\mathcal{M}$ as:
\[
    d_{\mathcal{M}}(\mu_1,\mu_2):=\sum_n 2^{-n} \cdot\mathbf{1}_{\mu_1^n=\mu^n_2}\le 1. 
\]
It is well-defined, nonnegative, symmetric, and has triangular inequality. 
Then for all $t$, almost surely
\begin{equation}\label{d(Rn,R)=0}
    d_{\mathcal{M}}(R^n_t,R_t)\le \sum_{k>n} 2^{-k}=2^{-n}\to 0,
    \quad n\to\infty,
\end{equation}
\begin{equation}\label{d(ERn,ER)=0}
    d_{\mathcal{M}}\big(\ \mathbb{E}^\mathrm{RI}[R^n_{\eta+t}-R^n_\eta\mid\mathcal{F}_\eta], \ \ 
    \mathbb{E}^\mathrm{RI}[R_{\eta+t}-R_\eta\mid\mathcal{F}_\eta]\ \big)
    \le \sum_{k>n} 2^{-k}=2^{-n}\to 0, \quad n\to\infty.
\end{equation}
We further define a metric on all Lebesgue measurable functions $f:[0,\infty)\to \mathcal{M}$ as 
\[
    d_{R}(f,g)=\int_0^\infty e^{-t}d_{\mathcal{M}}(f(t),g(t))dt\le \int_0^\infty e^{-t}dt<\infty.
\]
By Fubini's Theorem, \eqref{d(Rn,R)=0} and \eqref{d(ERn,ER)=0}, as $n\to\infty$, we have that almost surely 
\[
    d_{R}(R^n,R)\to 0, \quad 
    d_{R}\big(\ \big\{\mathbb{E}^\mathrm{RI}[R^n_{\eta+t}-R^n_\eta\mid\mathcal{F}_\eta]\big\}_{t\ge0}, \ \ \big\{\mathbb{E}^\mathrm{RI}[R_{\eta+t}-R_\eta\mid\mathcal{F}_\eta]\big\}_{t\ge0}\ \big) \to 0.
\]
%almost sure convergence givies convergence in distribution
Therefore, by taking $n\to\infty$ in \eqref{R^n}, 
\[
   \{\mathbb{E}^\mathrm{RI}[R_{\eta+t}-R_\eta\mid\mathcal{F}_\eta]\}_{t\ge0}\overset{d}{=}\{R_t\}_{t\ge0}. 
\]
This gives the result.
\end{proof}

Next we start constructing the process version of RI by defining jump times $T(m,n)$ and $t(m,n)$.
\begin{defn}\label{T(m,n) def}
    For RI, consider $\boldsymbol{\omega} = \sum_{m \geq 0} \delta_{(w_m^*, u_m)}\in \Omega$. 
    For any $m$, choose an equivalence class representative $w_m\in \pi^{-1}(w_m^*)$ in a manner which depends only on $w_m^*$.    \footnote{Here is a canonical way of choosing $w_m$: Since $\mathcal{G}$ is countable, we can list all vertices as $v_1,v_2,v_3,..$. For any $w_m^*$, take $j_m=\min\left\{j\mid v_j \text{ is on } w_m^*\right\}$, and then choose $w_m$ such that $w_m(0)$ is the first time of hitting $v_{j_m}$.}
    Recall that $\left\{\tilde{t}(m,n)\right\}_{m\ge 0, n\in\mathbb{Z}}$ are conditionally independent Exp$(1)$ random variables conditioned on RI. Consider the vertex version of interlacement ordering in \cite{hutchcroft2018interlacements} and define the index set of all points visited before $w_m(n)$ by:
    \begin{equation}\label{I(k,j)}
        t(m,n)=[\mathfrak{m}(w_m(n))]^{-1}\tilde{t}(m,n),
        \quad I(m,n)=\left\{(k,j)\mid u_k<u_m\right\}\cup\left\{(m,j)\mid j<n\right\}.
    \end{equation}
    We define time function $T(m,n)=\sum_{(k,j)\in I(m,n)} t(k,j)$. 
\end{defn}

The next lemma, parallel to Lemma 3.5 in \cite{gs-reflected-rw}, shows that this is a reasonable choice of time. 

\begin{lem}\label{m^**}
    There exists a function $\mathfrak{m}_{**} : \mathcal{VG} \to (0,\infty)$ such that if $\mathfrak{m}(x) > \mathfrak{m}_{**}(x)$ for all but finitely many $x \in \mathcal{VG}$, the following statements hold $\mathbb{P}^\mathrm{RI}$-a.s.:
    \begin{mylist}
        \item $T(m,n)<\infty, \quad \forall m\ge 0, n \in \mathbb{Z}$, \label{prop1}
        \item $\sup_{m\ge 0,n\in\mathbb{Z}}\ T(m,n)=\infty$. \label{prop2}
    \end{mylist}
\end{lem}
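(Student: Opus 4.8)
The plan is to route everything through the expected occupation field of RI. First I would record the key identity that for each $x\in\mathcal{VG}$,
\[
    \int_{W^*} N_x(w^*)\,d\nu(w^*)=\pi(x),
    \qquad N_x(w^*):=\#\{n\in\mathbb{Z}: w(n)=x\},
\]
where $N_x$ is well defined on $W^*$ by shift-invariance. This follows from the disintegration of $\nu$ over $W_{\{x\}}^*$ as the pushforward of $\mathbb{Q}_{\{x\}}$ under $\pi^*$, together with a short computation: under $\mathbb{Q}_{\{x\}}$ the backward part is conditioned on $\widetilde H_{\{x\}}=\infty$, so the trajectory enters $\{x\}$ for the first time at time $0$ and $N_x=1+\#\{n\ge1:X_n=x\}$; taking expectations gives $\mathrm{e}_x(x)\,\mathbb{P}^{\mathrm{RW}}_x[\widetilde H_x=\infty]^{-1}=\pi(x)$ by Definition \ref{capacity def}. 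By Campbell's formula for the Poisson process $\boldsymbol{\omega}$, the total number of visits to $x$ among excursions with label at most $u$, namely $L_x^u:=\sum_{k:u_k\le u}N_x(w_k^*)$, then satisfies $\mathbb{E}^{\mathrm{RI}}[L_x^u]=u\,\pi(x)$. I would define $\mathfrak{m}_{**}$ by fixing an enumeration $v_1,v_2,\dots$ of $\mathcal{VG}$ and setting $\mathfrak{m}_{**}(v_i)=2^i\pi(v_i)$, so that $\sum_x \pi(x)/\mathfrak{m}_{**}(x)=1<\infty$; under the hypothesis $\mathfrak{m}>\mathfrak{m}_{**}$ off a finite set, this yields $\sum_x \pi(x)/\mathfrak{m}(x)<\infty$.

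For \ref{prop1}, the point is that for any integer $u>u_m$ the index set $I(m,n)$ is contained in $\{(k,j): u_k\le u\}$, which absorbs both the earlier excursions and the entire doubly-infinite excursion $m$. Writing $\hat L_x^u:=\sum_{k:u_k\le u}\sum_{j:w_k(j)=x}\tilde t(k,j)$ and using $t(k,j)=[\mathfrak{m}(w_k(j))]^{-1}\tilde t(k,j)$, we have $T(m,n)\le \sum_x [\mathfrak{m}(x)]^{-1}\hat L_x^u$. Conditioning on $\boldsymbol{\omega}$ and using $\mathbb{E}^{\mathrm{RI}}[\tilde t(k,j)\mid\boldsymbol{\omega}]=1$ gives $\mathbb{E}^{\mathrm{RI}}[\hat L_x^u]=\mathbb{E}^{\mathrm{RI}}[L_x^u]=u\pi(x)$, hence by Tonelli
\[
    \mathbb{E}^{\mathrm{RI}}\Big[\sum_x [\mathfrak{m}(x)]^{-1}\hat L_x^u\Big]=u\sum_x \frac{\pi(x)}{\mathfrak{m}(x)}<\infty.
\]
Thus $\sum_x [\mathfrak{m}(x)]^{-1}\hat L_x^u<\infty$ a.s.\ for each integer $u$, and intersecting over $u\in\mathbb{N}$ gives this simultaneously for all integer $u$; choosing for each $(m,n)$ an integer $u>u_m$ yields $T(m,n)<\infty$ a.s.

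For \ref{prop2}, fix a vertex $x_0$. Since $\mathrm{cap}(\{x_0\})=\mathrm{e}_{x_0}(x_0)>0$ by transience, the number of excursions hitting $x_0$ with label at most $u$ equals $\boldsymbol{\omega}(W_{\{x_0\}}^*\times[0,u])$, which is Poisson with mean $u\,\mathrm{cap}(\{x_0\})\to\infty$; so a.s.\ infinitely many excursions visit $x_0$. List their labels as $u_{k_1}<u_{k_2}<\cdots$ and measurably pick positions $j_i$ with $w_{k_i}(j_i)=x_0$. For $\ell<i$ we have $u_{k_\ell}<u_{k_i}$, so $(k_\ell,j_\ell)\in I(k_i,j_i+1)$ and therefore
\[
    T(k_i,j_i+1)\ge \sum_{\ell<i} t(k_\ell,j_\ell)=[\mathfrak{m}(x_0)]^{-1}\sum_{\ell=1}^{i-1}\tilde t(k_\ell,j_\ell).
\]
Conditionally on $\boldsymbol{\omega}$ the variables $\tilde t(k_\ell,j_\ell)$ are i.i.d.\ $\mathrm{Exp}(1)$, so by the strong law of large numbers the right-hand side tends to $\infty$ as $i\to\infty$, giving $\sup_{m,n}T(m,n)=\infty$ a.s.

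The main obstacle is Part \ref{prop1}. The delicate point is that each excursion is doubly infinite, so the in-excursion contribution $\sum_{j<n}t(m,j)$ runs all the way to $j=-\infty$; one must not bound it term by term but rather aggregate by the visited vertex and invoke the occupation identity $\mathbb{E}^{\mathrm{RI}}[L_x^u]=u\pi(x)$. Extracting the correct summable choice of $\mathfrak{m}_{**}$ from this identity, and rigorously justifying the conditional-expectation and Tonelli interchanges (measurability of the selections $w_m$ and $j_i$, sigma-finiteness of $\nu$, and validity of Campbell's formula for $\boldsymbol{\omega}$), is where the care is needed.
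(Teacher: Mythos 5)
Your proof is correct, but for part \ref{prop1} it takes a genuinely different route from the paper. Part \ref{prop2} is essentially the paper's argument: both of you fix a vertex, note that a.s.\ infinitely many trajectories hit it, and lower-bound $\sup T(m,n)$ by a divergent sum of i.i.d.\ $\mathrm{Exp}(\mathfrak{m}(x_0))$ variables (the paper states the divergence directly; your SLLN step, applied conditionally on $\boldsymbol{\omega}$ where the selected indices $(k_\ell,j_\ell)$ are deterministic, is fine). For part \ref{prop1}, however, the paper does \emph{not} use the occupation identity $\int N_x\,d\nu=\pi(x)$: it fixes a reference vertex $o$, decomposes the time between the first hits of $o$ by two consecutive $o$-excursions over the annuli $\mathcal{VG}_n\setminus\mathcal{VG}_{n-1}$, chooses $c_n$ per annulus so that $\mathbb{P}^\mathrm{RI}[S_n<2^{-n}]>1-2^{-n}$, applies Borel--Cantelli, and then needs the strong Markov property of the interlacement point process (Lemma \ref{R SM}) together with the distributional identity $T(m_1,a_1)\overset{d}{=}T(m_2,a_2)-T_\infty$ to bootstrap from finiteness of increments to finiteness of each $T(m_j,a_j)$. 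Your first-moment argument --- the computation $\int_{W^*}N_x\,d\nu=\mathrm{e}_{\{x\}}(x)\,\mathbb{P}^{\mathrm{RW}}_x[\widetilde H_x=\infty]^{-1}=\pi(x)$, which is a correct use of Definition \ref{capacity def}, plus Campbell's formula and Tonelli --- bypasses all of this machinery, yields an explicit $\mathfrak{m}_{**}$ with the transparent sufficient condition $\sum_x \pi(x)/\mathfrak{m}(x)<\infty$ (robust to finitely many exceptions, matching the lemma's hypothesis), and in fact proves the stronger statement that the total duration $\sum_x[\mathfrak{m}(x)]^{-1}\hat L_x^u$ of all trajectories with labels at most $u$ has finite mean $u\sum_x\pi(x)/\mathfrak{m}(x)$, hence is a.s.\ finite simultaneously for all $u\in\mathbb{N}$; your containment $I(m,n)\subset\{(k,j):u_k\le u\}$ for integer $u>u_m$ correctly absorbs both the earlier trajectories and the doubly infinite backward tail of trajectory $m$. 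The trade-off is that your $\mathfrak{m}_{**}$ is pinned to a summability condition on first moments, whereas the paper's annulus construction only demands per-annulus smallness in probability and so could in principle certify smaller $\mathfrak{m}_{**}$; but since the lemma is a pure existence statement, your version is, if anything, a cleaner and more quantitative proof.
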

\begin{proof}
    For \ref{prop2}, take $o\in\mathcal{VG}$ and recall the definition of $W^*_K$ from \eqref{W_K^* def}. 
    
    Consider $\#\left\{k\mid u_k\le u, w_k^*\in W^*_{\{o\}}\right\}\sim\mathrm{Poisson}\ (u\cdot \mathrm{cap}(\{o\}))$, we have $\#\left\{k\mid w_k^*\in W^*_{\{o\}}\right\}=\infty$ $\mathbb{P}^\mathrm{RI}$-a.s. Let $\xi_1,\xi_2,...$ be i.i.d. exponential variables of parameter $\mathfrak{m}(o)$. Then
    \begin{equation*}
        \sup_{m\ge 0,n\in\mathbb{Z}}\ T(m,n)
        \ge \sum_{k=0}^{\infty}\sum_{n\in\mathbb{Z},w_k(n)=o}\ t(k,n)
        \ge \sum_{i=1}^{\#\left\{k\mid w_k^*\in W^*_{\{o\}}\right\}} \xi_i \ =\    \infty\quad a.s.
    \end{equation*}

    For \ref{prop1}, let $m_j$ be the index whose corresponding $u_{m_j}$ is the $j$-th smallest among all corresponding $u_m$ that satisfies $w_{m}^*$ passes through $o\in\mathcal{VG}$. Assume $w_{m_j}$ first hits $o$ at $w_{m_j}(a_j)$. 
    
    Next, we prove $T(m_2,a_2)-T(m_1,a_1)<\infty$ a.s.
    Let 
    \[
        B = \left\{(m,a)\mid m_1<m<m_2,\text{ or } m=m_1 \text{ and } a\ge a_1, \text{ or } m=m_{2} \text{ and } a<a_{2}\right\},
    \]
    \[
        B_n = \left\{(m,a)\in B\mid w_m(a)\in \mathcal{VG}_{n}\setminus\mathcal{VG}_{n-1}\right\}, 
        \ \text{ and }
        S_n=\sum_{(m,a)\in B_n} t(m,a).
    \]    
    Since $S_n$ is a finite sum of independent exponential variables a.s., we can take $c_n>0$ such that if $\min_{x\in \mathcal{VG}_{n}\setminus\mathcal{VG}_{n-1}}\mathfrak{m}(x)=c_n$, we have $\mathbb{P}^\mathrm{RI}[S_n<2^{-n}]>1-2^{-n}$. Now we define $\mathfrak{m}_{**}$ by
    \[
        \mathfrak{m}_{**}(x)=c_n,\ \forall x\in \mathcal{VG}_{n}\setminus\mathcal{VG}_{n-1}.
    \]
    By the Borel-Cantelli Lemma, we have $T(m_2,a_2)-T(m_1,a_1)=\sum_{n=1}^\infty S_n \lesssim\sum_{n=1}^\infty 2^{-n}<\infty$ a.s. Since $\inf\{t\mid R_t(W^*_{\{o\}}\times\mathbb{R}_+)>0\}$ is a stopping time for $R$, by Lemma \ref{R SM}, $\left\{T(m_{j+1}, a_{j+1})-T(m_j,a_j)\right\}_{j=1}^\infty$ is i.i.d. Moreover, let $T_\infty=\lim_{a\to\infty} T(m_1,a)$, and then 
    \[
        T(m_1,a_1)\overset{d}{=}T(m_2,a_2)-T_\infty\le T(m_2,a_2)-T(m_1,a_1)<\infty\quad\text{ a.s}.
    \]
    Hence almost surely $T(m_j,a_j)<\infty$ for any $j$. Now for any $(m,n)$, there exists $j$ such that $u_m<u_{m_j}$, so $T(m,n)<T(m_j,a_j)<\infty$ $\mathbb{P}^\mathrm{RI}$-a.s. This gives \ref{prop1}.
\end{proof}

\begin{defn} (process version of RI)\label{Z_t def}
    Let $\mathfrak{m}_{**}$ be as defined in Lemma \ref{m^**}. For any $\mathfrak{m}$ such that $\mathfrak{m}(x) > \mathfrak{m}_{**}(x)$ for all but finitely many $x \in \mathcal{VG}$, the process version of RI is defined as
    \[
        \forall t\ge0, \quad Z_t:=
        \begin{cases}
             w_m(n),&\quad \text{ if } T(m,n)\le t< T(m,n)+t(m,n)\\
             \infty,& \quad \text{else}.
        \end{cases}
    \]
    We denote the law of $\{Z_t\}$ as $\mathbb{P}^\mathrm{RI}$.
\end{defn}
\begin{rem}
In \cite{eisenbaum2022isomorphism}, the authors introduce the extended Markov process $\{Y_t\}_{t\ge 0}$ for transient Markov process $\{X_t\}_{t\ge 0}$ that satisfies certain conditions, and they prove that the local time process of $\{Y_t\}_{t\ge 0}$ can be related to the occupation times of RI and the Gaussian free field. Here $\{Y_t\}_{t\ge\tau_\infty}$ is the same as $\{Z_t\}_{t\ge0}$ in Definition \ref{Z_t def}. 
However, we do not expect that RW$^\infty$ fits into the framework of \cite{eisenbaum2022isomorphism}, because \cite{eisenbaum2022isomorphism} wires the boundary at infinity to a single point.
\end{rem}

At last, we prove a simple corollary to prepare for our proof of Theorem \ref{main}.
\begin{cor}\label{hat tau infty a.s.}
    Almost surely, for any $x\in\mathcal{VG}\cup\{\infty\}$, there exist arbitrarily large values of $t \geq 0$ for which $Z_t = x$.
\end{cor}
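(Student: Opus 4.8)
\section*{Proof proposal}

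The plan is to read the trajectory of $\{Z_t\}$ directly off the time labels $T(m,n)$ and holding times $t(m,n)$ of Definition~\ref{T(m,n) def}, treating the cases $x\in\mathcal{VG}$ and $x=\infty$ separately. A preliminary observation organizes both. Since $I(m,n+1)\setminus I(m,n)=\{(m,n)\}$, we have $T(m,n+1)=T(m,n)+t(m,n)$, so the holding intervals within a single excursion are contiguous. Writing $S_m=\lim_{n\to-\infty}T(m,n)$, $E_m=\lim_{n\to+\infty}T(m,n)$, and $L_m=\sum_{j\in\mathbb{Z}}t(m,j)$ (all finite by Lemma~\ref{m^**}\,\ref{prop1}), excursion $m$ occupies exactly the interval $(S_m,E_m)$ with $E_m=S_m+L_m$. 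Hence $Z_t\in\mathcal{VG}$ precisely when $t\in\bigcup_m(S_m,E_m)$, and $Z_t=\infty$ otherwise.

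For a fixed vertex $o\in\mathcal{VG}$, list the excursions through $o$ as $m_1,m_2,\dots$ in increasing order of $u_{m_j}$; since $\#\{k:u_k\le u,\ w_k^*\in W^*_{\{o\}}\}\sim\mathrm{Poisson}(u\,\mathrm{cap}(\{o\}))$ is finite while the total count is infinite (as in the proof of Lemma~\ref{m^**}), this is a genuine sequence with $u_{m_j}\to\infty$. Let $a_j$ be the first index with $w_{m_j}(a_j)=o$ and $\tau_j=T(m_j,a_j)$, so $Z_{\tau_j}=o$. The proof of Lemma~\ref{m^**} already shows, via the strong Markov property of $R$ (Lemma~\ref{R SM}), that the increments $\{\tau_{j+1}-\tau_j\}_{j\ge1}$ are i.i.d.; each is bounded below by the a.s.\ strictly positive holding time $t(m_j,a_j)$, so the partial sums of these i.i.d.\ summands diverge, giving $\tau_j\to\infty$ a.s. Thus $Z_t=o$ for arbitrarily large $t$, and since $\mathcal{VG}$ is countable this holds simultaneously for all $o$ off a single null set.

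For $x=\infty$, I will show that every left endpoint $S_m$ is an $\infty$-time and that $\sup_m S_m=\infty$. The key is that the intervals $(S_m,E_m)$ are pairwise disjoint: if $u_{m'}>u_m$, then the index set $\{(k,j):u_k<u_{m'}\}$ defining $S_{m'}$ contains both $\{(k,j):u_k<u_m\}$ and all of $\{(m,j):j\in\mathbb{Z}\}$, whence $S_{m'}\ge S_m+L_m=E_m$. Given this, $S_m$ lies in no interval $(S_{m''},E_{m''})$: for $u_{m''}<u_m$ one has $E_{m''}\le S_m$, and for $u_{m''}>u_m$ one has $S_{m''}\ge E_m>S_m$; hence $Z_{S_m}=\infty$. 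Finally, $\sup_{m,n}T(m,n)=\infty$ by Lemma~\ref{m^**}\,\ref{prop2}, i.e.\ $\sup_m E_m=\infty$; choosing excursions with $E_{m}\to\infty$ and, for each, a further excursion of larger $u$-label (possible since the $\{u_m\}$ are unbounded, as noted above for the excursions through $o$), the inequality $S_{m'}\ge E_m$ yields $\sup_m S_m=\infty$. Therefore $Z_t=\infty$ for arbitrarily large $t$, completing the proof.

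I expect the main obstacle to be the case $x=\infty$. The naive picture---that excursions are processed one after another, tiling $[0,\infty)$ and leaving no room for $\infty$-times---fails precisely because $\nu(W^*)=\infty$ forces infinitely many excursions to carry arbitrarily small labels $u_m$, so there is no ``first'' excursion and the time structure is a dense interleaving of short intervals. The disjointness inequality $S_{m'}\ge E_m$ is what tames this: it shows the occupied intervals are genuinely disjoint, that each endpoint $S_m$ remains uncovered, and that these endpoints inherit the unboundedness of the $E_m$ guaranteed by Lemma~\ref{m^**}\,\ref{prop2}.
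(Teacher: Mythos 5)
Your proposal is correct and takes essentially the same route as the paper: both cases rest on Lemma~\ref{m^**} and the interval structure of the times $T(m,n)$, and your key disjointness inequality $S_{m'}\ge E_m$ for $u_{m'}>u_m$ is precisely the inequality $\inf_{i}T(k,i)\ge\lim_{n}[T(m,n)+t(m,n)]$ that drives the paper's contradiction argument. The only cosmetic differences are that you exhibit the left endpoints $S_m$ as $\infty$-times where the paper uses the right endpoints $E_m$, and that for the vertex case you derive unboundedness of the return times from the i.i.d.\ increments established in the proof of Lemma~\ref{m^**} rather than appealing to Lemma~\ref{m^**}\,\ref{prop2} directly.
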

\begin{proof}
    If $x\in \mathcal{VG}$, since $\boldsymbol{\omega}(W^*_{\{x\}}\times[n,n+1])\sim \mathrm{Poisson}(\mathrm{cap}(\{x\}))$ and these random variables are mutually independent for different $n$, we have that $\boldsymbol{\omega}(W^*_{\{x\}}\times[n,n+1])>0$ for infinitely many $n\ \mathbb{P}^\mathrm{RI}$-a.s. By Lemma \ref{m^**}, this gives the proof.
    
    If $x=\infty$, consider $\lim_{n\rightarrow\infty}[T(m,n)+t(m,n)]$ for any $m$. By Lemma \ref{m^**}, this value is finite a.s.     Assume $\lim_{n\rightarrow\infty}[T(m,n)+t(m,n)]\in[T(k,j),T(k,j)+t(k,j))$ for some $(k,j)$, so $k>m$. 
    Thus, 
    \[
        T(k,j)\le\lim_{n\rightarrow\infty}[T(m,n)+t(m,n)]\le \inf_{i\in\mathbb{Z}}T(k,i)<T(k,j), 
    \]
    a contradiction. Therefore, $Z_{\lim_{n\rightarrow\infty}[T(m,n)+t(m,n)]}=\infty$. By Lemma \ref{m^**}, this gives the proof.
\end{proof}

\subsection{Random walk reflected off of infinity on \texorpdfstring{$\mathcal{G}$}{G}}

The following proposition defines a useful class of functions, which we use to describe the RW$^\infty$ model.
\begin{prop}[Proposition 1.2 in \cite{gs-reflected-rw}]\label{h^y def}
Let $A\subset \mathcal{VG}$ be a non-empty finite set. For each $\phi: A \to \mathbb{R}$, there exists a unique function $h^\phi : \mathcal{VG} \to \mathbb{R}$ such that:
\begin{mylist}
    \item $h^\phi|_A = \phi$ (boundary condition),
    \item $\mathrm{Energy}(h^\phi)$ is minimal among all functions $f : VG \to R$ with $f|_A = \phi$.
\end{mylist}
This $h^\phi$ is discrete harmonic on $VG \setminus A$, has finite Dirichlet energy, and the mapping $\phi \mapsto h^\phi$ is linear.
\end{prop}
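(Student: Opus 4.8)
The plan is to treat this as a standard Dirichlet-problem / orthogonal-projection statement in the Hilbert space of finite-energy functions. Write $\mathcal{E}(f,g)=\sum_{(u,v)\in\mathcal{EG}}\mathfrak{c}(u,v)(f(u)-f(v))(g(u)-g(v))$, so that $\mathrm{Energy}(f)=\mathcal{E}(f,f)$. Fix a basepoint $o\in A$ and let $H=\{g:\mathcal{VG}\to\mathbb{R}\mid \mathrm{Energy}(g)<\infty,\ g(o)=0\}$. First I would record that $(H,\mathcal{E})$ is a genuine Hilbert space: $\mathcal{E}(g,g)=0$ forces $g$ constant on the connected graph $\mathcal{G}$, hence $g\equiv g(o)=0$, so $\mathcal{E}$ is an inner product; and completeness follows because the gradient map $g\mapsto(g(u)-g(v))_{(u,v)\in\mathcal{EG}}$ is an isometry of $H$ onto a closed subspace of the weighted space $\ell^2(\mathcal{EG},\mathfrak{c})$ (its image is exactly the forms with zero sum around every cycle, a family of closed linear conditions). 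I would also note that each point evaluation $g\mapsto g(x)$ is $\mathcal{E}$-continuous on $H$, since $|g(x)|=|g(x)-g(o)|$ and $|g(x)-g(o)|^2\le R_{\mathrm{eff}}(o,x)\,\mathcal{E}(g)$ with $R_{\mathrm{eff}}(o,x)<\infty$ because $o$ and $x$ lie at finite graph distance.

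With this in hand, existence and uniqueness reduce to the projection theorem. After subtracting the constant $\phi(o)$, the feasible set becomes $C=\{g\in H\mid g|_A=\phi-\phi(o)\}$ (the value $g(o)=0$ is consistent with $\phi(o)-\phi(o)=0$), and $f\mapsto f-\phi(o)$ is an energy-preserving bijection from $\{f\mid f|_A=\phi\}$ onto $C$. The set $C$ is nonempty—take the reference function equal to $\phi-\phi(o)$ on $A$ and $0$ off $A$, which has finite energy because $A$ is finite—and by continuity of point evaluations it is a closed affine subspace of $H$. Letting $g^\ast$ be the unique element of $C$ of minimal $\mathcal{E}$-norm and setting $h^\phi:=g^\ast+\phi(o)$ yields the unique energy minimizer with $h^\phi|_A=\phi$; it has finite energy by construction.

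Next I would extract the remaining properties from the first-order (Euler--Lagrange) condition $\mathcal{E}(h^\phi,h)=0$ for every finite-energy $h$ with $h|_A=0$. Testing against $h=\mathbf{1}_x$ for $x\in\mathcal{VG}\setminus A$ (which lies in $H$ and has finite energy) and using the summation-by-parts identity $\mathcal{E}(h^\phi,\mathbf{1}_x)=\sum_{y\sim x}\mathfrak{c}(x,y)(h^\phi(x)-h^\phi(y))$ gives $\sum_{y\sim x}\mathfrak{c}(x,y)(h^\phi(x)-h^\phi(y))=0$, i.e.\ the discrete mean-value property, so $h^\phi$ is discrete harmonic on $\mathcal{VG}\setminus A$. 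For uniqueness and linearity I would use that the pair of conditions ``$f|_A=\phi$ and $\mathcal{E}(f,h)=0$ for all finite-energy $h$ with $h|_A=0$'' characterizes $h^\phi$: if $f,f'$ both satisfy them, then $f-f'$ vanishes on $A$ and is $\mathcal{E}$-orthogonal to itself, so $\mathcal{E}(f-f',f-f')=0$, forcing $f-f'$ constant and hence $0$. Because these conditions are linear in the pair $(\phi,f)$, the combination $a\,h^{\phi_1}+b\,h^{\phi_2}$ satisfies them for the datum $a\phi_1+b\phi_2$, whence $h^{a\phi_1+b\phi_2}=a\,h^{\phi_1}+b\,h^{\phi_2}$ by uniqueness.

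The main obstacle—really the only nonroutine point—is the functional-analytic foundation in the first paragraph: completeness of the finite-energy space and continuity of point evaluations, which together license the projection theorem. Both are classical facts of discrete potential theory; the $\ell^2$-gradient embedding handles completeness in full generality, while finite effective resistance between vertices at finite distance handles the point evaluations (transience, a standing assumption here, is what makes this theory and its probabilistic reading via the random walk well behaved, though it is not strictly needed for the bare existence--uniqueness statement). An alternative, more hands-on route would construct $h^\phi$ as a limit of the finite-network harmonic extensions along the exhaustion $\{\mathcal{G}_n\}$, controlling the limit by monotonicity of energies and a diagonal argument; I expect the Hilbert-space projection argument above to be the cleanest.
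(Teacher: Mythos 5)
Your proof is correct: the Hilbert-space projection argument (completeness of the finite-energy space via the gradient isometry onto the closed cycle-law subspace of $\ell^2(\mathcal{EG},\mathfrak{c})$, continuity of point evaluations via finite effective resistance, existence/uniqueness by projection onto the closed affine constraint set, harmonicity from the Euler--Lagrange condition tested against $\mathbf{1}_x$, and linearity from the uniqueness of the orthogonality characterization) goes through as written. Note that the paper itself gives no proof of this statement --- it is imported verbatim as Proposition 1.2 of \cite{gs-reflected-rw} --- and your argument is the standard one for this Dirichlet problem; the only implicit ingredients worth making explicit are local finiteness of $\mathcal{G}$ (needed so that $\mathbf{1}_x$ and the extension of $\phi-\phi(o)$ by zero have finite energy) and the requirement that the test functions $h$ in your characterization have finite energy so that $\mathcal{E}(f,h)$ is defined, both of which hold under the paper's standing assumptions.
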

For $y \in A$, let $\mathds{1}_y : A \to \mathbb{R}$ be the indicator function which equals 1 at $y$ and zero elsewhere. We write $h^{\mathds{1}_y}_A(x)$ as $h^y_A(x)$ for short. 

With the previous proposition, we are ready to introduce RW$^\infty$. 
\begin{thm}[Theorem 1.5 in \cite{gs-reflected-rw}]\label{RW^infty props}
There exists a function $\mathfrak{m}_* : \mathcal{VG} \to (0, \infty)$ such that for each function $\mathfrak{m} : \mathcal{VG} \to (0, \infty)$ with $\mathfrak{m}(x) \geq  \mathfrak{m}_*(x)$, $\forall x \in \mathcal{VG}$ and for each possible starting vertex $z \in \mathcal{VG}$, the following is true. There is a unique (in law) continuous-time stochastic process $X^\infty : [0, \infty) \to \mathcal{VG} \cup \{\infty\}$ with $X^\infty_0 = z$, called the \textbf{continuous time random walk on $\mathcal{G}$ reflected off of infinity}, which satisfies the following properties.
\begin{enumerate}
    \item[(i)] (Almost everywhere defined). For each $t \geq 0$, a.s. $X^\infty_t \in \mathcal{VG}$ and there exists $\varepsilon > 0$ such that $X^\infty_s = X^\infty_t$ for each $s \in (t - \varepsilon, t + \varepsilon)$.
    
    \item[(ii)] (Right continuity). Almost surely, for every $t \geq 0$ such that $X^\infty_t \in \mathcal{VG}$, there exists $\varepsilon > 0$ such that $X^\infty_s = X^\infty_t$ for each $s \in [t, t + \varepsilon]$.
    
    \item[(iii)] (Continuous time random walk). If we let $\tau := \min\{t > 0 : X^\infty_t \neq z\}$, then $\tau$ and $X^\infty_\tau$ are independent, $\tau$ has the exponential distribution with rate $ \mathfrak{m}(z)$, and $X^\infty_\tau$ has the law of a step of the random walk on $\mathcal{G}$ started at $z$ (i.e., for each $y \sim z$, $\mathbb{P}_z^\infty[X^\infty_\tau = y] = \mathfrak{c}(z, y)/\pi(z)$).
    
    \item[(iv)] (Markov property). For every $t \geq 0$ and $x \in \mathcal{VG}$, on the event $\{X^\infty_t = x\}$, the conditional law of $\{X^\infty_{s+t}\}_{s \geq 0}$ given $\{X^\infty_s\}_{s \leq t}$ is the same as the law of $\{X^\infty_s\}_{s \geq 0}$ started from $X^\infty_0 = x$.
    
    \item[(v)] (Recurrence). Almost surely there exist arbitrarily large values of $t \geq 0$ for which $X^\infty_t = z$.
    
    \item[(vi)] (Relation to harmonic functions). Let $A \subset \mathcal{VG}$ be a non-empty finite set and let $\phi : A \to \mathbb{R}$. Let $h^\phi : \mathcal{VG}\to \mathbb{R}$ be the energy-minimizing discrete harmonic function as in Proposition 1.2. If we let $\tau := \min\{t \geq 0 : X^\infty_t \in A\}$, then for each choice of starting vertex $z \in \mathcal{VG}$,
    \[
        h^\phi(z) = \mathbb{E}_z[\phi(X^\infty_\tau)].
    \]
\end{enumerate}
\end{thm}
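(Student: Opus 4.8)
Since this is a construction-and-verification statement (existence, uniqueness in law, and properties (i)--(vi)), the plan is to build $X^\infty$ explicitly and then check each property. I would start from the jump chain: let $\{X_k\}_{k\ge0}$ be the discrete random walk started at $z$ and attach to its $k$-th step an independent holding time $\tilde t_k/\mathfrak{m}(X_k)$ with $\tilde t_k\sim\mathrm{Exp}(1)$. Because $\mathcal{G}$ is transient, $X_k\to\infty$, so the walk exits every finite set; the first task is to choose $\mathfrak{m}_*$ so that the total holding time $\zeta:=\sum_{k}\tilde t_k/\mathfrak{m}(X_k)$ of a single passage to $\infty$ is finite almost surely whenever $\mathfrak{m}\ge\mathfrak{m}_*$. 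This is precisely the Borel--Cantelli estimate carried out in Lemma \ref{m^**} (its RW$^\infty$ analogue being Lemma 3.5 of \cite{gs-reflected-rw}): split the sum over the shells $\mathcal{VG}_n\setminus\mathcal{VG}_{n-1}$ and take $\mathfrak{m}_*$ growing fast enough along the exhaustion $\{\mathcal{G}_n\}$ that each shell contributes a summable amount. Finiteness of $\zeta$ is exactly what makes the walk reach $\infty$ in finite time and is the mechanism behind its recurrence.

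The core of the construction is the re-entry (reflection) law from $\infty$, and this is where I expect the real work to lie. I would obtain it as a limit of finite-volume processes: for each $n$ run the accelerated walk on $\mathcal{G}_n$ with \emph{free} boundary at $\partial\mathcal{G}_n$ (the exterior left unwired), and prove that these processes are consistent and converge, as $n\to\infty$, in finite-dimensional distributions away from the null set of times at which the process visits $\infty$. The free boundary condition is exactly what makes the resulting entrance law agree with the energy-minimizing harmonic measure of Proposition \ref{h^y def} rather than the wired law underlying RI. Granting the limit, I would concatenate the successive $\infty$-to-$\infty$ excursions, each of finite length by the holding-time bound above, to define $X^\infty$ on all of $[0,\infty)$.

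Properties (i)--(iv) should then follow from the construction: positivity of the holding times and local finiteness give the a.e.-definedness and right continuity in (i)--(ii); agreement with the continuous-time random walk at vertices gives (iii); and the strong Markov property of the jump chain together with the independence of successive excursions gives (iv). Recurrence (v) holds because $X^\infty$ returns to $\infty$ infinitely often and each re-entered excursion has positive probability of hitting $z$ before escaping, so by independence $z$ is hit infinitely often. For the harmonic characterization (vi), I would set $u(z):=\mathbb{E}_z[\phi(X^\infty_\tau)]$ with $\tau$ the hitting time of $A$, use (iv) to show $u$ is discrete harmonic on $\mathcal{VG}\setminus A$ with $u|_A=\phi$, and then check that $u$ has finite Dirichlet energy --- the point at which the \emph{free} nature of the reflection is used. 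Uniqueness of the energy minimizer in Proposition \ref{h^y def} then forces $u=h^\phi$. Uniqueness in law of $X^\infty$ follows since (iii), (iv) and (vi) determine all finite-dimensional distributions of the vertex-valued trajectory.

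The hardest part is constructing the entrance law from $\infty$: proving convergence of the free finite-volume approximations and confirming that the limit carries the energy-minimizing (free) boundary condition --- equivalently, the finite-energy property invoked in (vi). The remaining verifications are bookkeeping built on the transience of $\mathcal{G}$ and the Borel--Cantelli choice of $\mathfrak{m}_*$.
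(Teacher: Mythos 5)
You should first note that the paper you are working from does not prove this statement at all: it is quoted verbatim as Theorem 1.5 of \cite{gs-reflected-rw} and used as an imported black box, so the only meaningful comparison is with the construction in that reference. Your first step is fine and does match the cited work: attaching holding times $\tilde t_k/\mathfrak{m}(X_k)$ to the jump chain and choosing $\mathfrak{m}_*$ by a Borel--Cantelli argument along the exhaustion $\{\mathcal{G}_n\}$ so that the total passage time to $\infty$ is a.s.\ finite is precisely Lemma 3.5 of \cite{gs-reflected-rw} (the RI analogue being Lemma \ref{m^**} of the present paper).

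The genuine gap is your second step. You propose to extract a single \emph{entrance law} from $\infty$ and then build $X^\infty$ by concatenating independent $\infty$-to-$\infty$ excursions, with (iv) and (v) deduced from ``independence of successive excursions.'' No such entrance law exists in general: RW$^\infty$ is \emph{not} Markov at $\infty$ (Proposition 5.3 of \cite{gs-reflected-rw}, invoked here in Remark \ref{P^inf_inf rmk}), because on a graph with several ``points at infinity'' --- a transient tree is the standard example --- the conditional law of $\{X^\infty_t\}_{t\ge\tau_\infty}$ given the past depends on which point at infinity the walk escaped through, so successive excursions are dependent. A process glued from i.i.d.\ excursions launched from a fixed law at $\infty$ is essentially the process version of random interlacements, $\{Z_t\}$ of Definition \ref{Z_t def}, and by Theorem \ref{main} that coincides with RW$^\infty$ \emph{only} when $\mathbf{HD}(\mathcal{G})=\mathbb{R}$; on a transient tree your construction would produce the wired object and property (vi) would fail, since hitting distributions would be governed by the equilibrium-measure (wired) harmonic measure rather than the energy-minimizing $h^\phi$. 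Note also that (iv) as stated asserts the Markov property only at vertices $x\in\mathcal{VG}$, so it neither requires nor supports a Markov decomposition at $\infty$, and the restricted Markov property at $\infty$ is recovered in the present paper only under hypothesis \ref{HD = R} (Lemma \ref{strong Markov}). The construction in \cite{gs-reflected-rw} circumvents all of this by taking the limit of the \emph{entire} finite-volume free-boundary processes as trajectory laws, never passing through an excursion decomposition at $\infty$; your free-boundary instinct for (vi) is correct, but the gluing-by-entrance-law step is unsalvageable in general and is, in fact, exactly the distinction this paper is about.
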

\begin{rem}\label{remark m_* m_**}
    Recall the construction of the process version of RI involves $\mathfrak{m_{**}}$ (see Definition \ref{Z_t def}), while the construction of the RW$^\infty$ involves $\mathfrak{m_{*}}$ (see Theorem \ref{RW^infty props}). For $(\mathcal{G},\mathfrak{c})$, we first assume that $\mathfrak{m}(x)>\max\{\mathfrak{m}_*(x),\mathfrak{m}_{**}(x)\}$ for all $x\in\mathcal{VG}$. If \ref{EQ} holds for $\mathfrak{m}$, the discrete chains of both models agree in law. Thus, if one model is defined under some $\mathfrak{m}'$ (which may not satisfy the above assumption), so is the other, and \ref{EQ} still holds under $\mathfrak{m}'$. Therefore, \ref{EQ} is essentially a universal property of $(\mathcal{G},\mathfrak{c})$.
\end{rem}

As a continuous-time stochastic process, it is not surprising that RW$^\infty$ satisfies $(i)(ii)(iii)(iv)$. What makes it special is the constant participation of $\infty$ in the process. Transience of random walk on $\mathcal{G}$ enforces RW$^\infty$ to go to $\infty$. Meanwhile, the condition for $\mathfrak{m}$ enforces RW$^\infty$ to jump faster as it ``approaches $\infty$''. Therefore, it can travel to and come back from $\infty$ in finite time, leading to recurrence as in $(v)$. As for $(vi)$, we cite Remark 1.3 in \cite{gs-reflected-rw} to explain why we consider energy-minimizing discrete harmonic functions when studying RW$^\infty$.

Having defined the process $\{X^\infty_t\}$, we can now give some notations. 
For $x\in\mathcal{VG}$, we use $\mathbb{P}^\infty_x$ for the probability measure under which $\{X^\infty_t\}$ starts from $X^\infty_0=x$. 
For any finite $K\subset\mathcal{VG}$, define $\tau_K$ as the first time of hitting $K$. We slightly abuse the notation here by using $\tau_K$ for both RW$^\infty$ and discrete random walk on $\mathcal{G}$. Also, we define $\tau_\infty$ as the first time of $\{X^\infty_t\}$ hits $\infty$. 

\begin{lem}[the strong Markov property of RW$^\infty$ in $\mathcal{VG}$; Lemma 3.10 in \cite{gs-reflected-rw}]\label{VG SM}
    Let $\tau$ be a stopping time for $\{X_t\}_{t \geq 0}$ and let $x \in \mathcal{VG}$. For $z \in \mathcal{VG}$, on the event $\{\tau < \infty, X_\tau = x\}$, the $P_z$-conditional law of $\{X_{s+\tau}\}_{s \geq 0}$ given $\{X_s\}_{s \leq \tau}$ is the same as the $P_x$-law of $\{X_s\}_{s \geq 0}$.
\end{lem}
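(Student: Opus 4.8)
The plan is to deduce the strong Markov property at vertex-valued stopping times from the ordinary Markov property (Theorem \ref{RW^infty props}(iv)) by the classical dyadic-approximation argument, with right-continuity at vertices (Theorem \ref{RW^infty props}(ii)) supplying the passage to the limit. Throughout, write $\mathcal{F}_\tau$ for the $\sigma$-algebra generated by the stopped path $\{X_s\}_{s\le\tau}$. It suffices to show that for every bounded $\mathcal{F}_\tau$-measurable $G\ge0$ and every bounded functional $F$ of the post-$\tau$ trajectory depending on finitely many coordinates $X_{\tau+s_1},\dots,X_{\tau+s_k}$,
\[
  \mathbb{E}_z\big[G\, F(\{X_{s+\tau}\}_{s\ge0});\, \tau<\infty,\, X_\tau=x\big]
  = \mathbb{E}_z\big[G;\, \tau<\infty,\, X_\tau=x\big]\cdot \mathbb{E}_x\big[F(\{X_s\}_{s\ge0})\big].
\]

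First I would treat stopping times $\tau$ taking values in a countable set $\{t_1,t_2,\dots\}\subset[0,\infty]$. On the event $\{\tau=t_k\}\cap\{X_{t_k}=x\}\in\mathcal{F}_{t_k}$ one may apply the ordinary Markov property at the deterministic time $t_k$: conditionally on $\{X_s\}_{s\le t_k}$ the law of $\{X_{s+t_k}\}_{s\ge0}$ is the $\mathbb{P}^\infty_x$-law. Decomposing over $k$ and summing yields the displayed identity for discrete $\tau$, since the restriction of $G\mathbf{1}_{X_\tau=x}$ to $\{\tau=t_k\}$ is $\mathcal{F}_{t_k}$-measurable.

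For a general stopping time $\tau$, I would use the dyadic approximations $\tau_n=(\lfloor 2^n\tau\rfloor+1)2^{-n}$, which are stopping times taking countably many values with $\tau_n\downarrow\tau$ and $\tau\le\tau_n$, so $\mathcal{F}_\tau\subseteq\mathcal{F}_{\tau_n}$. Applying the discrete case to $\tau_n$ with the test function $G':=G\mathbf{1}_{X_\tau=x}$ (which is $\mathcal{F}_\tau$- hence $\mathcal{F}_{\tau_n}$-measurable) gives the identity with $\tau_n$ in place of $\tau$ and $\{X_{\tau_n}=x\}$ in place of $\{X_\tau=x\}$. The crux is then the passage $n\to\infty$: on the event $\{X_\tau=x\}$, right-continuity at vertices (Theorem \ref{RW^infty props}(ii)) guarantees an $\varepsilon>0$ with $X_s=x$ for $s\in[\tau,\tau+\varepsilon]$, so $X_{\tau_n}=x$ for all large $n$; similarly $X_{s_i+\tau_n}\to X_{s_i+\tau}$ whenever $X_{s_i+\tau}$ is a vertex at which the path is right-continuous. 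Dominated convergence then turns the $\tau_n$-identity into the desired identity for $\tau$.

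The main obstacle will be making this limiting step fully rigorous given that RW$^\infty$ is only right-continuous at vertex-times and only ``almost everywhere defined'' (Theorem \ref{RW^infty props}(i)): one must ensure that each evaluation time $s_i+\tau$ is, almost surely on $\{X_\tau=x\}$, a point where the path takes a vertex value and is right-continuous, so that $X_{s_i+\tau_n}\to X_{s_i+\tau}$, the troublesome null event being $\{X_{s_i+\tau}=\infty\}$. I expect this to be controlled by combining (i) and (ii) — for instance by first proving the identity for functionals whose evaluation times avoid the (measure-zero) set of times at which the process sits at $\infty$, and then extending — together with a routine monotone-class argument to pass from finite-dimensional functionals to general bounded measurable functionals of the post-$\tau$ path.
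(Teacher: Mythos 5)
The paper never proves this lemma: it is imported verbatim as Lemma 3.10 of \cite{gs-reflected-rw}, so there is no internal proof to compare against, and your proposal must be judged on its own merits. Your skeleton is the standard and correct route: handle countable-valued stopping times by decomposing over the values and applying the deterministic-time Markov property (Theorem \ref{RW^infty props}(iv)), then approximate a general $\tau$ by dyadic $\tau_n\downarrow\tau$ and pass to the limit. The pieces you state are sound as far as they go: $G\mathbf{1}_{X_\tau=x}$ is indeed $\mathcal{F}_{\tau_n}$-measurable, and $\mathbf{1}_{X_{\tau_n}=x}\to 1$ a.s.\ on $\{X_\tau=x\}$, because Theorem \ref{RW^infty props}(ii) is an almost-sure statement quantified over all $t$ simultaneously and hence applies at the random time $\tau$.

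The genuine gap is exactly the one you flag, and your proposed repair does not close it. To get $X_{s_i+\tau_n}\to X_{s_i+\tau}$ you must first show $\mathbb{P}^\infty_z[X^\infty_{s_i+\tau}=\infty,\ X^\infty_\tau=x]=0$. The set of times at which the path sits at $\infty$ is a \emph{random} Lebesgue-null set, so no choice of deterministic evaluation times ``avoids'' it, and a Fubini argument only yields the identity for a.e.\ tuple $(s_1,\dots,s_k)$. You cannot upgrade from a.e.\ tuples to all tuples: the process is right-continuous only at vertex times, so on the a priori positive-probability event $\{X^\infty_{s_i+\tau}=\infty\}$ nothing relates the value at $s_i+\tau$ to values at nearby good times --- invoking right-continuity there presupposes precisely what is to be proved. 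A concrete fix stays within your framework: for $n$ with $2^{-n}<s_i$, write $s_i+\tau=\tau_n+V_n$ where $V_n=s_i-(\tau_n-\tau)\in(0,s_i)$ is $\mathcal{F}_{\tau_n}$-measurable. Your discrete case says that on $\{X^\infty_{\tau_n}=x\}$ the post-$\tau_n$ path is independent of $\mathcal{F}_{\tau_n}$ with law $\mathbb{P}^\infty_x$; a freezing/monotone-class argument (modulo a routine joint-measurability check for the post-$\tau_n$ process, which follows from the piecewise-constant structure in Theorem \ref{RW^infty props}(i)--(ii) and Lemma \ref{2.3}) then gives $\mathbb{P}^\infty_z[X^\infty_{\tau_n+V_n}=\infty,\ X^\infty_{\tau_n}=x]=\mathbb{E}^\infty_z[h(V_n)\mathbf{1}_{X^\infty_{\tau_n}=x}]$ with $h(v)=\mathbb{P}^\infty_x[X^\infty_v=\infty]=0$ for every fixed $v$ by Theorem \ref{RW^infty props}(i). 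Since $\{X^\infty_\tau=x\}\subseteq\liminf_n\{X^\infty_{\tau_n}=x\}$ a.s., a countable union bound yields $\mathbb{P}^\infty_z[X^\infty_{s_i+\tau}=\infty,\ X^\infty_\tau=x]=0$; now (ii) applies at the a.s.\ vertex-valued time $s_i+\tau$, your dominated-convergence step is legitimate, and the monotone-class extension to general bounded functionals of the post-$\tau$ path completes the proof. Alternatively, since the paper treats this lemma as a black box from \cite{gs-reflected-rw}, simply citing it is also acceptable in context.
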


\begin{lem}[Lemma 3.14 in \cite{gs-reflected-rw}]\label{2.3}
Assume that $\mathcal{G}$ is locally finite. Almost surely, the following is true. Let $b > a > 0$ such that $X^\infty_t \neq \infty$ for each $t \in [a,b]$. Then $\{X^\infty_t\}$ jumps to a different vertex only finitely many times during the time interval $[a, b]$.
\end{lem}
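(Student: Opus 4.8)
The plan is to argue by contradiction. Fix an interval $[a,b]$ with $X^\infty_t \neq \infty$ for all $t \in [a,b]$ and suppose that $X^\infty$ makes infinitely many jumps during $[a,b]$. Let $a \le \sigma_1 < \sigma_2 < \cdots \le b$ be the successive jump times in $[a,b]$ and let $Y_k := X^\infty_{\sigma_k}$ be the vertices visited. Since the $\sigma_k$ are increasing and bounded by $b$, they converge to some $c \le b$, and by right continuity (Theorem \ref{RW^infty props}(ii)) we have $X^\infty_t = Y_k$ for $t \in [\sigma_k, \sigma_{k+1})$. I will show that the accumulation of jumps at $c$ forces $X^\infty_c = \infty$, which contradicts the hypothesis since $c \in [a,b]$.

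The key observation is that the embedded vertex chain is a transient random walk. By the continuous-time random-walk description (Theorem \ref{RW^infty props}(iii)) together with the Markov property at the deterministic time $a$ (Theorem \ref{RW^infty props}(iv)) and the strong Markov property in $\mathcal{VG}$ at the first jump time after $a$ (Lemma \ref{VG SM}), on the event $\{X^\infty_a = x\}$ the forward sequence $(Y_k)$ evolves, as long as it avoids $\infty$, according to the transition kernel $p$; that is, $(Y_k)$ is a path of the transient random walk on $(\mathcal{G},\mathfrak{c})$ started from $x$. Since the walk is transient, each vertex is visited only finitely many times almost surely, so any infinite trajectory must eventually leave every finite subset of $\mathcal{VG}$. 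Hence $(Y_k)$ exits every finite set, i.e.\ $Y_k \to \infty$ in the compactification.

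Combining these facts, as $t \uparrow c$ the value $X^\infty_t = Y_{k(t)}$ leaves every finite set while the elapsed time $c - a \le b - a$ remains finite. By the construction of RW$^\infty$ in \cite{gs-reflected-rw} — under which a finite-time escape to the boundary of $\mathcal{G}$ is precisely what it means for the process to hit $\infty$ — this forces $X^\infty_c = \infty$, giving the desired contradiction. Finally, to promote the conclusion from a fixed interval to the almost-sure ``for all $b > a > 0$'' statement, I would observe that it suffices to prove the single almost-sure event that every accumulation point of the (countable) set of jump times is a time at which $X^\infty = \infty$; since an interval avoiding $\infty$ then contains no such accumulation point, it can carry only finitely many jumps, and this removes the need to quantify over uncountably many pairs $(a,b)$.

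The main obstacle is the final implication ``$Y_k \to \infty$ with finite elapsed time $\Rightarrow X^\infty_c = \infty$.'' Unlike the rest of the argument this is not probabilistic but rests on the precise topology in which $\infty$ is adjoined and on the limiting construction of RW$^\infty$ over the exhaustion $\{\mathcal{G}_n\}$; one must verify that a trajectory escaping to the graph boundary in finite time is recorded by the process as equalling $\infty$, rather than as an undefined or merely discontinuous value. A secondary point requiring care is that conditioning on the event ``avoid $\infty$ on $[a,b]$'' does not disturb the transience of the embedded chain, which is why I prefer to phrase the argument through the pathwise almost-sure property that transient walk paths leave every finite set.
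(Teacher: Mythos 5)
First, a structural remark: this paper does not prove the statement at all --- it is imported verbatim as Lemma 3.14 of \cite{gs-reflected-rw} --- so there is no internal proof to compare yours against; your attempt has to be judged as a reconstruction of the source's argument. In outline your reconstruction is reasonable: right-accumulation of jump times at a vertex-valued time is excluded by the simultaneous right-continuity in Theorem \ref{RW^infty props}(ii), the embedded chain between visits to $\infty$ is the transient discrete walk (by Theorem \ref{RW^infty props}(iii),(iv) iterated via Lemma \ref{VG SM} at the jump times, which are vertex-valued stopping times), and your resolution of the conditioning issue --- an almost-sure pathwise property of the walk survives conditioning on any event --- is correct.

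The genuine gap is the one you flag yourself, and it is fatal as written: the step ``the embedded walk leaves every finite set in finite elapsed time, hence $X^\infty_c=\infty$ at the accumulation time $c$'' cannot be derived from the properties available in this paper. Theorem \ref{RW^infty props}(i) constrains $X^\infty_t$ only at a \emph{fixed deterministic} $t$, and since $c$ is random with $\mathbb{P}[c=t]=0$ for every fixed $t$, it says nothing about $X^\infty_c$; property (ii) gives only right-continuity, which is silent about left-accumulation points. Nothing in the axiomatic list (i)--(vi) forbids assigning a vertex value at the Lebesgue-null random set of accumulation times, so the contradiction must come from the explicit limiting construction of $X^\infty$ in \cite{gs-reflected-rw} (where, analogously to Definition \ref{Z_t def} for $Z$, times not covered by holding intervals at vertices are precisely the $\infty$-times). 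Invoking ``the construction'' at exactly the decisive step means your proof reduces to citing the source whose lemma you are reproving. Two smaller unfinished points: (a) your final reduction --- ``every accumulation point of jump times is an $\infty$-time'' --- must cover \emph{all} excursions, not just the first one; since RW$^\infty$ is not Markov at $\infty$, restarting the embedded-chain argument after each visit to $\infty$ requires applying Lemma \ref{VG SM} at a vertex-valued stopping time such as the first entry into some $\mathcal{VG}_n$ after that visit, which you do not carry out; (b) the enumeration ``successive jump times $\sigma_1<\sigma_2<\cdots$'' presupposes the jump set has order type $\omega$, which is part of what is in question --- one should instead extract a monotone sequence from an arbitrary accumulation point and treat the two monotonicity cases separately, as your own argument implicitly does.
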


Here we also give a result which is similar to Corollary \ref{hat tau infty a.s.}.
\begin{cor}\label{tau_infty a.s.}
    For any $z\in\mathcal{VG}$, $\mathbb{P}^\infty_z$-a.s. there exist arbitrarily large values of $t \geq 0$ for which $X^\infty_t = \infty$.
\end{cor}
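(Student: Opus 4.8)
The plan is to exploit the recurrence property (Theorem \ref{RW^infty props}(v)), which guarantees that $X^\infty$ returns to its starting vertex $z$ at arbitrarily large times, in order to set up a regeneration structure anchored at $z$. The key point is that, although RW$^\infty$ need not be Markov at $\infty$, it is strong Markov whenever it sits at a vertex (Lemma \ref{VG SM}); so every time the walk returns to $z$ it restarts afresh, and in particular it hits $\infty$ again after a positive amount of time. Concatenating these regenerations produces an increasing sequence of hitting times of $\infty$, and the substantive task is to prove this sequence is unbounded.

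First I would record that $\tau_\infty<\infty$ holds $\mathbb{P}^\infty_z$-a.s. Indeed, on the event $\{\tau_\infty=\infty\}$ the process never visits $\infty$, so by Theorem \ref{RW^infty props}(iii)--(iv) its embedded sequence of visited vertices is exactly the transient random walk $\{X_n\}$ on $\mathcal{G}$, which visits $z$ only finitely often; this contradicts Theorem \ref{RW^infty props}(v), forcing $\mathbb{P}^\infty_z[\tau_\infty=\infty]=0$. Next I would define stopping times recursively by $\rho_1=\tau_\infty$, $\sigma_n=\inf\{t>\rho_n: X^\infty_t=z\}$, and $\rho_{n+1}=\inf\{t>\sigma_n: X^\infty_t=\infty\}$. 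Recurrence makes each $\sigma_n$ finite a.s. (the visits to $z$ are unbounded, hence extend beyond $\rho_n$), and applying Lemma \ref{VG SM} at the vertex $z$ shows each $\rho_{n+1}$ is finite a.s. as well. The same application of Lemma \ref{VG SM} at the times $\sigma_n$ shows that the increments $Y_{n+1}:=\rho_{n+1}-\sigma_n$ are i.i.d. with the law of $\tau_\infty$ under $\mathbb{P}^\infty_z$.

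It then remains to deduce $\rho_n\to\infty$. Since $\sigma_n\ge\rho_n$, one has $\rho_{n+1}\ge\rho_n+Y_{n+1}$, hence $\rho_n\ge\rho_1+\sum_{k=2}^{n}Y_k$. Because $X^\infty_0=z\in\mathcal{VG}$ persists for a positive exponential time before the first jump, $\tau_\infty$ is strictly positive, so $\mathbb{P}^\infty_z[\tau_\infty>\delta]>0$ for some $\delta>0$; by independence of the $Y_k$ and the second Borel--Cantelli lemma, $Y_k>\delta$ for infinitely many $k$, whence $\sum_k Y_k=\infty$ and $\rho_n\to\infty$ a.s. Thus $X^\infty_t=\infty$ occurs along the unbounded sequence of times $\{\rho_n\}$, which is exactly the claim.

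The step I expect to be most delicate is the rigorous justification of the regeneration: checking that $\rho_n$ and $\sigma_n$ are genuine stopping times for the filtration of $\{X^\infty_t\}$, that $X^\infty_{\sigma_n}=z$ so that Lemma \ref{VG SM} applies verbatim at $\sigma_n$, and that this simultaneously yields finiteness of the next $\rho$ and the i.i.d. structure of the increments. Establishing $\tau_\infty<\infty$ cleanly is the other point requiring care, since the event $\{\tau_\infty=\infty\}$ must be handled through the identification of the embedded jump chain with $\{X_n\}$ together with transience, rather than by conditioning and altering the law.
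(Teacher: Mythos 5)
Your proof is correct, but it takes a genuinely more elaborate route than the paper's. The paper argues by contradiction in a single step: if the conclusion fails, then for some fixed $a>0$ the event $\{X^\infty_t \neq \infty,\ \forall t\in[a,\infty)\}$ has positive probability; on that event Lemma \ref{2.3} shows the walk makes only locally finitely many jumps after time $a$, so its sequence of visited vertices is a transient random-walk trajectory, while recurrence (Theorem \ref{RW^infty props}(v)) forces that trajectory to visit $z$ infinitely often --- and the probability of the latter is bounded by $\sup_{x}\mathbb{P}^{\mathrm{RW}}_x[\text{visits } z \text{ i.o.}]=0$ by transience. Since this rules out avoidance of $\infty$ on every ray $[a,\infty)$ at once, ``arbitrarily large hitting times'' follows directly and no regeneration is needed. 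Your argument isolates exactly the same tension (recurrence of RW$^\infty$ versus transience of the embedded chain) in your first step, which is essentially the paper's whole proof specialized to ruling out $\{\tau_\infty=\infty\}$, and then rebuilds unboundedness via a renewal structure at $z$ plus Borel--Cantelli. That is valid, and the i.i.d.\ cycle structure you construct is more information than the corollary asserts (it is close in spirit to the excursion decomposition used later in Lemma \ref{X^n=Z^n}); but it incurs technical debts the paper's route avoids. Concretely: (a) you must verify $X^\infty_{\sigma_n}=z$, i.e.\ that visits to $z$ cannot accumulate from the right at a time when the process sits at $\infty$ --- this needs Lemma \ref{2.3} or the i.i.d.\ exponential holding times at $z$, and you correctly flag it as the delicate point; (b) you should note $X^\infty_{\rho_{n+1}}=\infty$ follows from right continuity at vertices (Theorem \ref{RW^infty props}(ii)); and (c) in your first step, the embedded vertex sequence is only well defined on $\{\tau_\infty=\infty\}$ once jump accumulation is excluded, which again requires Lemma \ref{2.3} rather than just properties (iii)--(iv) as you cite. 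Your closing remark that the transience argument must proceed by an inclusion of events (namely $\{\tau_\infty=\infty\}\cap\{z\text{ visited i.o.}\}$ is contained in a null event for the embedded walk) rather than by conditioning is exactly right, and with points (a)--(c) patched your proof stands.
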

\begin{proof}
    Assume the contrary. Then there exists $a>0$ such that $X^\infty_t \neq \infty$ on $[a,\infty)$ with positive probability. 
    When this event does happen, thanks to Lemma \ref{2.3}, $\{X^\infty_t\}$ jumps to a different vertex only countably many times during the time interval $[a, \infty)$. By Theorem \ref{RW^infty props} (v), $z$ appears in these countably many vertices for infinitely many times. Therefore,
    \[
        \mathbb{P}^\infty_z[X^\infty_t \neq \infty,\forall t\in[a,\infty)]
        \le \sup_{x\in\mathcal{VG}}\mathbb{P}^{\mathrm{RW}}_x[\text{visits $z$ for infinitely many times}]
        =0.
    \]
    Here we use random walk on $(\mathcal{G},\mathfrak{c})$ is transient to deduce the last equality. Therefore, 
    \[
        \mathbb{P}^\infty_z[X^\infty_t \neq \infty,\forall t\in[a,\infty)]
        =0.
    \]
    A contradiction.
\end{proof}

\section{Proof of \ref{EQ}\texorpdfstring{$\Rightarrow$}{⇒}\ref{F = W}}
This section proves \ref{EQ}$\Rightarrow$\ref{F = W}  via two variants of Aldous–Broder algorithm. If \ref{EQ} holds, the 
Aldous-Broder spanning forest generated by $Z|_{[0,\infty)}$ (which is $\mathfrak{c}$-WSF) and the Aldous-Broder spanning forest generated by $X^\infty|_{[\tau_\infty,\infty)}$ (which is $\mathfrak{c}$-FSF) should yeild the same thing, which gives \ref{F = W}. 

We first give definitions of the spanning forests generated by distinct processes, and then present the proof of $\ref{EQ}\Rightarrow\ref{F = W}$. 

Let $\{\mathcal{G}_n\}_{n\ge 1}$ be an increasing family of finite, connected subgraphs such that $\mathcal{G}=\cup_{n \geq 1}\mathcal{G}_n$. Obtain weighted graph $(\mathcal{G}_n^\mathrm{W},\mathfrak{c}_n^\mathrm{W})$ by identifying all the vertices of $\mathcal{G}\setminus\mathcal{G}_n$ to a single vertex $z_n$. 

\begin{defn}[$\mathfrak{c}$-FSF and $\mathfrak{c}$-WSF]\label{FSF WSF def}
    For any finite graph, its $\mathfrak{c}$-spanning tree is defined as a random variable $\mathcal{T}$, such that for each spanning tree $\mathfrak{t}$ of this graph,  
    \[
        \mathbb{P}[\mathcal{T}=\mathfrak{t}]=\frac{1}{Z}\Pi_{e\in\mathfrak{t}}\ \mathfrak{c}(e), 
    \] where $Z$ is a normalizing constant.
    Let $\mathcal{T}^\mathrm{F}_n$ and $\mathcal{T}^\mathrm{W}_n$ be the $\mathfrak{c}$-spanning tree of $\mathcal{G}_n$ and $\mathcal{G}_n^\mathrm{W}$, respectively. 
    Then $\mathfrak{c}$-FSF and $\mathfrak{c}$-WSF are defined as the weak limits of $\mathcal{T}^\mathrm{F}_n$ and $\mathcal{T}^\mathrm{W}_n$, respectively. That is to say, there exists two random spanning forests of $\mathcal{G}$, $\mathcal{T}^{\mathrm{FSF}}$ and $\mathcal{T}^{\mathrm{WSF}}$, such that for any finite set $E\subset\mathcal{EG}$,
    \[
        \mathcal{T}^\mathrm{F}_n\cap E\to\mathcal{T}^\mathrm{FSF}\cap E\quad
        \text{and}\quad\mathcal{T}^\mathrm{W}_n\cap E\to\mathcal{T}^\mathrm{WSF}\cap E
    \]in the total variation sense. For existence and uniqueness, see Section 10.1 of \cite{lyons-peres}. 
\end{defn}

\begin{defn}
    For any $z\in\mathcal{VG}$, consider RW$^\infty$ started from $z$. 
    Take $\eta_x=\inf\{t\mid X^\infty_t=x\}$. Let $p(x)$ be the vertex visited by $\{X^\infty_t\}$ immediately before time $\eta_x$. \textbf{The Aldous-Broder spanning forest of $\mathcal{G}$ generated by $X^\infty|_{[0,\infty)}$} is defined as 
    \[
        \mathcal{T}^\infty(z)=(\mathcal{VT}^\infty(z),\mathcal{ET}^\infty(z)) \text{, where } \mathcal{VT}^\infty(z)=\mathcal{VG},\mathcal{ET}^\infty(z)=\{(p(x),x)\mid x\in \mathcal{VG}\setminus\{z\}\}.
    \]
    For existence of $p(x)$, we refer to Lemma 3.13 in \cite{gs-reflected-rw}. For more about this spanning forest, see Definition 1.13 in \cite{gs-reflected-rw}.
\end{defn}

\begin{defn}
    For any $z\in\mathcal{VG}$, consider RW$^\infty$ started from $z$. 
    Take $\widehat{\eta}_x=\inf\{t>\tau_\infty\mid X^\infty_t=x\}$. Let $\widehat{p}(x)$ be the vertex visited by $\{X^\infty_t\}$ immediately before time $\widehat{\eta}_x$. \textbf{The Aldous-Broder spanning forest of $\mathcal{G}$ generated by $X^\infty|_{(\tau_\infty,\infty)}$} is defined as 
    \[
        \widehat{\mathcal{T}}^\infty(z)=(\widehat{\mathcal{VT}}^\infty(z),\widehat{\mathcal{ET}}^\infty(z)) \text{, where } \widehat{\mathcal{VT}}^\infty(z)=\mathcal{VG},\widehat{\mathcal{ET}}^\infty(z)=\{(\widehat{p}(x),x)\mid x\in \mathcal{VG}\}.
    \]
    For existence of $\hat{p}(x)$, we refer to Lemma 3.13 in \cite{gs-reflected-rw}. 
\end{defn}

\begin{lem}\label{FSF}
     For any $z\in\mathcal{VG}$, $\widehat{\mathcal{T}}^\infty(z)\overset{d}{=}\mathcal{T}^{\mathrm{FSF}}$.
\end{lem}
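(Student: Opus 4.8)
The plan is to reduce the statement to the corresponding result for the Aldous--Broder forest recorded from time $0$, namely Theorem~1.14 in \cite{gs-reflected-rw}, which gives $\mathcal{T}^\infty(z)\overset{d}{=}\mathcal{T}^{\mathrm{FSF}}$ for every starting vertex $z$. The essential content of Lemma~\ref{FSF} is then that the law of the Aldous--Broder forest is insensitive to whether one begins recording first-entrance edges at time $0$ or at $\tau_\infty$. By the weak-limit characterization in Definition~\ref{FSF WSF def}, it suffices to fix a finite edge set $E\subset\mathcal{EG}$ and show $\widehat{\mathcal{T}}^\infty(z)\cap E\overset{d}{=}\mathcal{T}^{\mathrm{FSF}}\cap E$. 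Since the restriction to $E$ is determined only by the values $\widehat{p}(x)$ for $x$ ranging over the finite vertex set $V_E$ of endpoints of edges in $E$, the whole problem localizes to matching the joint law of finitely many first-entrance-after-$\tau_\infty$ edges with the corresponding free-boundary law.

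The mechanism I would use to transport information from Theorem~1.14 to the post-$\tau_\infty$ regime is a regeneration argument based on the strong Markov property in $\mathcal{VG}$. By recurrence (Theorem~\ref{RW^infty props}(v)) together with Corollary~\ref{tau_infty a.s.}, the return time $\widehat{\eta}_z=\inf\{t>\tau_\infty:X^\infty_t=z\}$ is a.s.\ finite and is a stopping time with $X^\infty_{\widehat{\eta}_z}=z\in\mathcal{VG}$; hence Lemma~\ref{VG SM} shows that $\{X^\infty_{t+\widehat{\eta}_z}\}_{t\ge0}$ is an independent fresh copy governed by $\mathbb{P}^\infty_z$. The from-time-$0$ Aldous--Broder forest of this copy is $\mathcal{T}^{\mathrm{FSF}}$ by Theorem~1.14. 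For any vertex $x\in V_E$ whose first visit after $\tau_\infty$ occurs at or after $\widehat{\eta}_z$, the edge $\widehat{p}(x)$ coincides with the first-entrance edge produced by the regenerated copy, so those edges inherit the correct free-boundary law.

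The crux, and the step I expect to be the main obstacle, is reconciling the initial window $(\tau_\infty,\widehat{\eta}_z)$ and the fact that $\widehat{\mathcal{T}}^\infty(z)$ assigns an edge to \emph{every} vertex (including $z$), whereas $\mathcal{T}^\infty(z)$ leaves $z$ rootless. Because the window can contain the first-after-$\tau_\infty$ visits of infinitely many vertices, a single regeneration does not control all of $V_E$ at once, and one cannot invoke any Markov property at $\infty$ (which is unavailable in this generality). I would resolve this by exploiting that Theorem~1.14 holds for \emph{all} starting vertices, so the forest law is start-independent, and then either iterating the regeneration at successive returns to $z$ or passing through the finite exhaustion $\{\mathcal{G}_n\}$: one checks that the first-entrance-after-$\tau_\infty$ structure seen inside any fixed $\mathcal{G}_m$ is, by recurrence and the excursion decomposition, distributed exactly as the free-boundary first-entrance structure underlying $\mathcal{T}^{\mathrm{F}}_m$. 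Matching these localized laws for every finite $E$ and letting $m\to\infty$ then yields $\widehat{\mathcal{T}}^\infty(z)\cap E\overset{d}{=}\mathcal{T}^{\mathrm{FSF}}\cap E$, and the total-variation limit in Definition~\ref{FSF WSF def} gives the claim.
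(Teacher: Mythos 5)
Your overall strategy is the right one---reduce to a finite edge set $E$, invoke Theorem 1.14 of \cite{gs-reflected-rw} for an \emph{arbitrary} starting vertex, and transfer it to the post-$\tau_\infty$ regime via the strong Markov property in $\mathcal{VG}$ (Lemma \ref{VG SM})---and you correctly identify the crux: the initial window between $\tau_\infty$ and your regeneration time may contain the first post-$\tau_\infty$ visits to vertices of $V_E$, and no Markov property at $\infty$ is available. But your proposed resolutions do not close this gap. Regenerating at $\widehat{\eta}_z$ (or iterating at successive returns to $z$) never removes the window $(\tau_\infty,\widehat{\eta}_z)$: the problematic first-entrance edges are created \emph{before} the first regeneration, and later regenerations are irrelevant to them. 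Your alternative---that ``the first-entrance-after-$\tau_\infty$ structure seen inside any fixed $\mathcal{G}_m$ is distributed exactly as the free-boundary first-entrance structure underlying $\mathcal{T}^{\mathrm{F}}_m$''---is not available either: $\mathcal{T}^{\mathrm{F}}_m$ arises from Aldous--Broder run on the finite graph $\mathcal{G}_m$ itself, whereas $X^\infty$ exits $\mathcal{G}_m$ and returns through $\infty$; the FSF restricted to $E$ equals $\lim_m \mathcal{T}^{\mathrm{F}}_m\cap E$ only in the limit, so asserting the finite-level match is essentially restating the lemma, not proving it.

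The paper's proof closes the gap with one clean choice of stopping time: pick $m$ with $E\subset\mathcal{EG}_m$ and then $M$ so large that every neighbor of every vertex of $\mathcal{VG}_m$ lies in $\mathcal{VG}_M$, and set $\eta_M=\inf\{t>\tau_\infty : X^\infty_t\in\mathcal{VG}_M\}$. By definition of $\eta_M$, no vertex of $\mathcal{VG}_M$ is visited during $(\tau_\infty,\eta_M)$; since each endpoint $x$ of an edge of $E$ lies in $\mathcal{VG}_m\subset\mathcal{VG}_M$ and its predecessor $\widehat{p}(x)$ is a neighbor of $x$, hence also in $\mathcal{VG}_M$, the entire restriction $E\cap\widehat{\mathcal{T}}^\infty(z)$ is a function of $X^\infty|_{[\eta_M,\infty)}$ alone and coincides with $E$ intersected with the from-time-$0$ Aldous--Broder forest of the shifted process. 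Conditioning on $X^\infty_{\eta_M}=x\in\mathcal{VG}_M$ and applying Lemma \ref{VG SM} gives $E\cap\widehat{\mathcal{T}}^\infty(z)\overset{d}{=}E\cap\mathcal{T}^\infty(x)$, which equals $E\cap\mathcal{T}^{\mathrm{FSF}}$ in law by Theorem 1.14 for every $x$; summing over $x$ and using Definition \ref{FSF WSF def} finishes. In short: the fix is to regenerate at the first post-$\tau_\infty$ entrance to a large finite set tailored to $E$ (so the initial window provably contains \emph{no} edge-relevant visits), rather than at a return to a fixed vertex; your write-up is missing exactly this idea.
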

\begin{proof}
    For any finite edge set $E\subset\mathcal{EG}$, assume that $E\subset\mathcal{EG}_m$ and all neighbors of vertices in $\mathcal{VG}_m$ are in $\mathcal{VG}_M$. 
    Let $\eta_k=\inf\{t>\tau_\infty\mid X^\infty_t\in \mathcal{VG}_k\}$ for all $k$, then $\eta_M<\eta_m$. By Lemma \ref{VG SM}, we have the strong Markov property of RW$^\infty$ for $\eta_M$. Hence, conditioned on $X^\infty_{\eta_M}=x$, 
    the conditional law of $X^\infty|_{[\eta_M,\infty)}$ is the same with the law of $\{X^\infty_t\}$ started from $x$,
    so the conditional law of $E\cap \widehat{\mathcal{T}}^\infty(z)$ is the same as the law of $E\cap\mathcal{T}^\infty(x)$. Therefore, 
    \begin{align*}
        \mathbb{P}[E\cap\widehat{\mathcal{T}}^\infty(z)\in \ \cdot\ ]
        =&\sum_{x\in \mathcal{VG}_M}  \mathbb{P}^\infty_z[X^\infty_{\eta_M}=x]\mathbb{P}[E\cap\widehat{\mathcal{T}}^\infty(z)\in \ \cdot\   \mid X^\infty_{\eta_M}=x]\\
        =&\sum_{x\in \mathcal{VG}_M}  \mathbb{P}^\infty_z[X^\infty_{\eta_M}=x]\mathbb{P}[E\cap\mathcal{T}^\infty(x)\in \ \cdot\ ]\\
        =&\sum_{x\in \mathcal{VG}_M}  \mathbb{P}^\infty_z[X^\infty_{\eta_M}=x]\mathbb{P}[E\cap\mathcal{T}^{\mathrm{FSF}}\in \ \cdot\ ]
        =\mathbb{P}[E\cap\mathcal{T}^{\mathrm{FSF}}\in \ \cdot\ ].
    \end{align*}
    Here the third equality holds because for any $z\in\mathcal{VG}$, $\mathbb{P}^\infty(x)$-a.s. $\mathcal{T}^\infty(x)$ is the $\mathfrak{c}$-FSF of $\mathcal{G}$ (see proof of Theorem 1.14 in \cite{gs-reflected-rw}). 
    By definition and uniqueness of $\mathfrak{c}$-FSF, $\widehat{\mathcal{T}}^\infty(z)\overset{d}{=}\mathcal{T}^{\mathrm{FSF}}$. 
\end{proof}

\begin{defn}
    For the process version of RI, take $\eta_x=\inf\{t\mid Z_t=x\}$, which is finite a.s. because of Corollary \ref{hat tau infty a.s.}. Let $p(x)$ be the vertex visited by $Z_t$ immediately before time $\eta_x$. This is well defined because assume $\eta_x=T(m,a)$, then $p(x)=\omega_m(a-1)$. The Aldous-Broder spanning forest of $\mathcal{G}$ generated by $Z|_{(0,\infty)}$ is defined as 
    \[
        \mathcal{T}^{\mathrm{RI}}=(\mathcal{VT}^{\mathrm{RI}},\mathcal{ET}^{\mathrm{RI}}) \text{, where } \mathcal{VT}^{\mathrm{RI}}=\mathcal{VG},\mathcal{ET}^{\mathrm{RI}}=\{(p(x),x)\mid x\in \mathcal{VG}\}.
    \]
\end{defn}

\begin{proof}[Proof of \ref{EQ}$\Rightarrow$\ref{F = W} in Theorem \ref{main}]
    Let the starting point of RW$^\infty$ be any $x\in\mathcal{VG}$. On the one hand, by Lemma \ref{FSF}, $\widehat{\mathcal{T}}^\infty(x)$ is the $\mathfrak{c}$-FSF of $\mathcal{G}$ $\mathbb{P}^\infty_x$-a.s. 
    On the other hand, according to Theorem 1.1 in \cite{hutchcroft2018interlacements}, $\mathcal{T}^{\mathrm{RI}}$ is the $\mathfrak{c}$-WSF of $\mathcal{G}$ $\mathbb{P}^\mathrm{RI}$-a.s. 

    Since $\widehat{\mathcal{T}^\infty}(x)$ and $\mathcal{T}^{\mathrm{RI}}$ are obtained by running the same algorithm on either $\{X^\infty_t\}_{t\ge\tau_\infty}$ or $\{Z_t\}_{t\ge0}$, \ref{EQ} implies that the $\mathfrak{c}$-FSF of $\mathcal{G}$ has the same law as the $\mathfrak{c}$-WSF of $\mathcal{G}$. This gives \ref{HD = R}. 
\end{proof}

\section{Proof of \ref{HD = R}\texorpdfstring{$\Rightarrow$}{⇒}\ref{EQ} and the strong Markov property}
In Section 4.1 we give definitions and cite previous work on $\ref{F = W}\Rightarrow\ref{HD = R}$. 
In Section 4.2, we employ an electrical network argument to prove Proposition \ref{HD to lim}, which establishes that the two models share identical hitting measures on any finite set $K\subset\mathcal{VG}$. This, combined with the fact that—given an excursion starting at $y$—its remainder follows the law of a continuous-time random walk from $y$, implies a form of excursion similarity when excursions are truncated at their first hitting time of $K$. In Section 4.3, we prove Markov-type properties to establish independence between these excursions (Lemma \ref{Z SM} and Lemma \ref{strong Markov}). In Section 4.4, we synthesize this independence with the excursion similarity to demonstrate the equivalence of the truncated models (Lemma \ref{X^n=Z^n}), from which \ref{EQ} is obtained by passing to the limit. Finally in Section 4.5 we prove Corollary \ref{SM}.

\begin{prop} \label{HD to lim}
    For any finite $K\subset\mathcal{VG}$ and $y\in K$, if \ref{HD = R} holds, then
    \begin{equation}\label{hy&cap Xn}
        \lim_{n\to\infty }h^y_K(X_n)=\mathrm{\tilde{e}}_K(y), \quad\forall X_0=w\in\mathcal{VG},
    \end{equation}
    where $h^y_K = h^{\mathbf{1}_y}_K$ is the energy-minimizing harmonic function as defined in Proposition \ref{h^y def} and $\mathrm{\tilde{e}}_K(y)$ is the normalized equilibrium measure as defined in Definition \ref{capacity def}.
\end{prop}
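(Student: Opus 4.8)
The plan is to represent $h^y_K$ as a Green potential plus a constant, to recognize that constant as the almost sure limit along the walk, and then to pin it down by a short linear-algebra computation on $K$ that brings in the equilibrium measure. Throughout, write $\Delta f(x):=\pi(x)f(x)-\sum_{w\sim x}\mathfrak{c}(x,w)f(w)$ for the charge (discrete Laplacian) of a function $f$, and $G(x,z):=\sum_{n\ge 0}p^{(n)}(x,z)/\pi(z)$ for the symmetric Green function of the transient walk. First I would record the structural facts about $h^y_K$ from Proposition \ref{h^y def}: it has finite Dirichlet energy and is harmonic off $K$, so its charge $q:=\Delta h^y_K$ is supported on the finite set $K$. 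Since each single-point Green function $G(\cdot,z)$ has finite energy and charge $\delta_z$, the potential $Gq:=\sum_{z\in K}q(z)G(\cdot,z)$ has finite energy and charge exactly $q$. Hence $h^y_K-Gq$ is harmonic on all of $\mathcal{VG}$ with finite energy, so assumption \ref{HD = R} forces it to equal a constant $c_y$; that is, $h^y_K=c_y+Gq$. This representation is the single place where \ref{HD = R} is used.

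Given the representation, the almost sure limit is immediate: $G(X_n,z)=\mathbb{E}[\#\{m\ge n:X_m=z\}\mid\mathcal{F}_n]\to 0$ almost surely by transience, so $Gq(X_n)\to 0$, and therefore $\lim_n h^y_K(X_n)=c_y$ almost surely, for every starting vertex $w$. It then remains to show $c_y=\tilde{\mathrm{e}}_K(y)$. Evaluating $h^y_K=c_y+Gq$ on $K$ gives the vector identity $G_K q=\mathbf{1}_y-c_y\mathbf{1}$ on $K$, where $G_K=(G(x,z))_{x,z\in K}$ and $\mathbf{1}$ is the all-ones vector. On the other hand, the last-exit decomposition yields $\sum_{z\in K}G(x,z)\mathrm{e}_K(z)=1$ for $x\in K$, i.e. $G_K\mathrm{e}_K=\mathbf{1}$, so $\mathrm{e}_K=G_K^{-1}\mathbf{1}$ and, by symmetry of $G_K$, $\mathrm{e}_K^{\top}G_K=\mathbf{1}^{\top}$. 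Multiplying $G_K q=\mathbf{1}_y-c_y\mathbf{1}$ on the left by $\mathrm{e}_K^{\top}$ collapses the left side to $\mathbf{1}^{\top}q=\sum_{z\in K}q(z)$ and the right side to $\mathrm{e}_K(y)-c_y\,\mathrm{cap}(K)$; equating gives $c_y=\mathrm{e}_K(y)/\mathrm{cap}(K)=\tilde{\mathrm{e}}_K(y)$, provided $\sum_{z\in K}q(z)=0$.

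Thus the crux is the charge-neutrality $\sum_{z\in K}q(z)=0$, which is exactly what distinguishes the free (energy-minimizing) solution $h^y_K$ from the wired harmonic measure. I would obtain it from the Euler--Lagrange condition $\mathrm{Energy}(h^y_K,v)=0$ for every finite-energy $v$ with $v|_K\equiv 0$. Testing against $v=1-g_K$, where $g_K(x)=\mathbb{P}^{\mathrm{RW}}_x[\tau_K<\infty]$ is the equilibrium potential (finite energy, $v|_K=0$), and using $\mathrm{Energy}(h^y_K,1)=0$, gives $\mathrm{Energy}(h^y_K,g_K)=0$. Since $g_K(X_n)\to 0$ almost surely, assumption \ref{HD = R} makes $g_K$ a potential (an energy-limit of finitely supported functions), so $\mathrm{Energy}(h^y_K,g_K)=\sum_x g_K(x)q(x)=\sum_{z\in K}q(z)$ (as $q$ is supported on $K$ and $g_K\equiv 1$ there). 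Combining the two identities yields $\sum_{z\in K}q(z)=0$.

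I expect this charge-neutrality step, together with the careful justification of the Green-potential representation under \ref{HD = R}, to be the main obstacle: it is where the hypothesis does real work, converting a statement about the free object $h^y_K$ into one about the wired equilibrium measure $\mathrm{e}_K$. The remaining manipulations are routine potential theory, and the equilibrium identity $G_K\mathrm{e}_K=\mathbf{1}$ together with the energy-pairing facts are classical (see \cite{lyons-peres}).
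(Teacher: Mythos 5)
Your proof is correct, but it reaches \eqref{hy&cap Xn} by a genuinely different route than the paper in the two places where \ref{HD = R} does real work. The paper identifies $h^y_K$ with an explicit \emph{wired} voltage $\psi$: it uses \ref{HD = R} through Theorem \ref{equis for F=W} to equate the free and wired effective conductances between $k_1$ and $K\setminus\{k_1\}$ (via Exercise 9.42 in \cite{lyons-peres}) and then invokes uniqueness of the energy minimizer, after which charge neutrality $\sum_{t}\Delta\psi(k_t)=0$ is automatic because $\psi$ is by construction the voltage of a unit current between subsets of $K$; it also builds the voltage space $\mathbf{U}$, the functional $\alpha(u)=\lim_n u(X_n)$ (Lemmas \ref{U in BD} and \ref{lim=cons}, the latter via Royden decomposition), and the inverse relation $G_K=\Delta^{-1}$ (Lemma \ref{G symm}). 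You instead bypass current uniqueness and the $\mathbf{U}/\mathbf{V}$ apparatus entirely: you use \ref{HD = R} once to get the Green-potential representation $h^y_K=c_y+Gq$ (since $h^y_K-Gq$ is harmonic with finite energy, hence constant), and you recover charge neutrality $\sum_{z\in K}q(z)=0$ variationally, testing the Euler--Lagrange orthogonality against $v=1-g_K$ with $g_K(x)=\mathbb{P}^{\mathrm{RW}}_x[\tau_K<\infty]$. Both proofs then share the same endgame: symmetry of $G$, the identity $G_K\mathrm{e}_K=\mathbf{1}$ (which you derive probabilistically by last-exit decomposition, the paper electrically via $\Delta\phi=\mathrm{e}_K$ for the wired voltage $\phi$ of Definition \ref{wired infty current}), and pairing against $\mathrm{e}_K$. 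Your route is leaner and more self-contained for this proposition; the paper's buys the reusable framework ($\alpha$, $\mathbf{V}$, $G_K=\Delta^{-1}$) and the conceptually suggestive identification of the free minimizer with a wired voltage, which mirrors the FSF/WSF theme of Theorem \ref{main}. Two small presentational points: your claim $G(X_n,z)\to 0$ a.s.\ deserves a line (nonnegative supermartingale convergence together with $\mathbb{E}[G(X_n,z)]\to 0$, or L\'evy's 0--1 law applied to $\{\exists m\ge n: X_m=z\}$), and your appeal to \ref{HD = R} to place $g_K$ in $\mathbf{D}_0$ is actually unnecessary, since $g_K=\sum_{z\in K}\mathrm{e}_K(z)G(\cdot,z)$ is a potential of a finitely supported measure and lies in $\mathbf{D}_0$ unconditionally; neither affects correctness.
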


\subsection{The Dirichlet functional space and electrical currents}

We start by defining the Dirichlet inner product on the space of all functions that map $\mathcal{VG}$ to $\mathbb{R}$: choose a base point $o\in\mathcal{VG}$, then
\[
    \langle f,g\rangle=f(o)g(o)  +\sum_{(x,y)\in \mathcal{EG}} \mathfrak{c}(x,y)[f(x)-f(y)][g(x)-g(y)], 
\]
provided the sum converges absolutely. Note that $\mathcal{EG}$ consists of unoriented edges. 
In particular, let 
\[
    \text{Energy}(f)=\langle f,f\rangle-f(o)^2=\sum_{(x,y)\in \mathcal{EG}} \mathfrak{c}(x,y)[f(x)-f(y)]^2.
\]
Then we can define a Hilbert space of Dirichlet functions
\[
    \textbf{D}(\mathcal{G}) := \{f : \mathcal{VG} \to \mathbb{R} : \text{ Energy}(f) < \infty\}. 
\]
Let $\textbf{D}_0(\mathcal{G})$ be the closure in $\textbf{D}(\mathcal{G})$ of all finitely supported elements of $\textbf{D}(\mathcal{G})$ and $\textbf{HD}(\mathcal{G})$ be all discrete harmonic elements of $\textbf{D}(\mathcal{G})$.

Let $\mathcal{E}$ be the oriented edge set of $\mathcal{G}$. For antisymmetric functions $\theta_1$ and $\theta_2$ on $\mathcal{E}$, we define an inner product
\[
    \langle \theta_1,\theta_2\rangle_{\mathrm{anti}}= \sum_{(x,y)\in \mathcal{EG}}  [\mathfrak{c}(x,y)]^{-1}\theta_1(x,y)\theta_2(x,y)
\]
provided the sum converges absolutely. Let $\mathrm{Energy}(\theta):=\langle \theta,\theta\rangle_{\mathrm{anti}}$. 
Define the flow space as:
\[
    l^2(\mathcal{E},\mathfrak{c})= \{\theta: \mathcal{E}\to\mathbb{R}\mid \forall (x,y)\in\mathcal{E}, \theta(x,y)=-\theta(y,x), \text{ and Energy}(\theta)<\infty\}.
\]
Define the gradient as $\nabla(f)(x,y):=\mathfrak{c}(x,y)[f(x)-f(y)]$. Then $\mathrm{Energy}(f)=\mathrm{Energy}(\nabla f)$. 

For a finite weighted graph, consider disjoint subsets $A$ and $B$ of the vertex set, the voltage $v$ from $A$ to $B$ with boundary condition $v|_{A}\equiv v_1$ and $v|_{B}\equiv v_2$ is a function that is harmonic on $(A\sqcup B)^c$, where $v_1>v_2$. The corresponding current $\theta$ is defined by \textbf{Ohm's Law} $\theta(x,y)=\mathfrak{c}(x,y)[v(x)-v(y)]$. 
Let $\{\mathcal{G}_n\}_{n\ge 1}$ be an increasing family of finite, connected subgraphs such that $\mathcal{G}=\cup_{n \geq 1}\mathcal{G}_n$. Let the corresponding weighted graphs be $\{(\mathcal{G}_n,\mathfrak{c}_n)\}_n$. Identify the vertices outside $\mathcal{G}_n$ to $z_n$, forming the weighted graph $(\mathcal{G}_n^\mathrm{W},\mathfrak{c}_n^\mathrm{W})$. 

\begin{prop}[free current; Proposition 9.1 in \cite{lyons-peres}]\label{free def}
    Let $(a,b)$ be an edge in $\mathcal{G}$, and $\theta_n$ be the unit current flow in $(\mathcal{G}_n,\mathfrak{c}_n)$ from $a$ to $b$. Then there exists $\theta^{(a,b)}_\mathrm{F}\in l^2(\mathcal{E},\mathfrak{c})$, such that $\mathrm{Energy}(\theta_n - \theta^{(a,b)}_\mathrm{F}) \to 0$ as $n \to \infty$ and $\mathrm{Energy}(\theta^{(a,b)}_\mathrm{F}) = \theta^{(a,b)}_\mathrm{F}(a,b) \mathfrak{c}^{-1}(a,b)$. 
\end{prop}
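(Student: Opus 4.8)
The plan is to realize everything inside the Hilbert space $l^2(\mathcal{E},\mathfrak{c})$ with inner product $\langle\cdot,\cdot\rangle_{\mathrm{anti}}$, and to identify each unit current flow with an orthogonal projection of one fixed reference flow. Write $\chi=\chi^{(a,b)}$ for the single-edge unit flow defined by $\chi(a,b)=1$, $\chi(b,a)=-1$, and $\chi\equiv 0$ on every other oriented edge; then $\chi\in l^2(\mathcal{E},\mathfrak{c})$ with $\mathrm{Energy}(\chi)=\mathfrak{c}^{-1}(a,b)$. Let $\diamondsuit_n\subset l^2(\mathcal{E},\mathfrak{c})$ be the cycle space of $(\mathcal{G}_n,\mathfrak{c}_n)$ (the closed span of its cycles, viewed as flows vanishing off $\mathcal{G}_n$), and let $\diamondsuit$ be the cycle space of $\mathcal{G}$. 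In a finite graph the cycle space is exactly the space of sourceless flows, so the unit flows from $a$ to $b$ in $\mathcal{G}_n$ form the affine subspace $\chi+\diamondsuit_n$. By Thomson's principle the current flow $\theta_n$ is the unique energy-minimizer over this affine set, hence the point of $\chi+\diamondsuit_n$ nearest the origin, namely
\[
    \theta_n=\chi-P_{\diamondsuit_n}\chi,
\]
where $P_{\diamondsuit_n}$ is orthogonal projection onto $\diamondsuit_n$.

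Next I would pass to the limit. Since $\mathcal{G}_n\uparrow\mathcal{G}$ and every cycle of $\mathcal{G}$ is finite, the subspaces $\diamondsuit_n$ increase and $\bigcup_n\diamondsuit_n$ is dense in $\diamondsuit$. By the standard fact that orthogonal projections onto an increasing sequence of closed subspaces converge strongly to the projection onto the closure of their union, $P_{\diamondsuit_n}\chi\to P_{\diamondsuit}\chi$ in energy. Setting $\theta^{(a,b)}_\mathrm{F}:=\chi-P_{\diamondsuit}\chi\in l^2(\mathcal{E},\mathfrak{c})$ then yields $\mathrm{Energy}(\theta_n-\theta^{(a,b)}_\mathrm{F})\to 0$, which is the first assertion. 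Note that $\theta^{(a,b)}_\mathrm{F}=P_{\diamondsuit^\perp}\chi$ lies in $\diamondsuit^\perp=\overline{\nabla\mathbf{D}}$, so it is indeed a gradient-type flow, the free current.

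For the energy identity, since $\theta^{(a,b)}_\mathrm{F}\in\diamondsuit^\perp$ while $\chi-\theta^{(a,b)}_\mathrm{F}=P_{\diamondsuit}\chi\in\diamondsuit$, orthogonality gives
\[
    \mathrm{Energy}(\theta^{(a,b)}_\mathrm{F})=\langle\theta^{(a,b)}_\mathrm{F},\theta^{(a,b)}_\mathrm{F}\rangle_{\mathrm{anti}}=\langle\theta^{(a,b)}_\mathrm{F},\chi\rangle_{\mathrm{anti}}.
\]
Because $\chi$ is supported on the single unoriented edge $(a,b)$, the sum defining $\langle\theta^{(a,b)}_\mathrm{F},\chi\rangle_{\mathrm{anti}}$ collapses to the one term $\mathfrak{c}^{-1}(a,b)\,\theta^{(a,b)}_\mathrm{F}(a,b)$, which is exactly the claimed formula.

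I expect the only genuine obstacle to be the bookkeeping in the first paragraph: verifying cleanly that the sourceless flows in $\mathcal{G}_n$ coincide with $\diamondsuit_n$ (so the constraint set is precisely $\chi+\diamondsuit_n$), that the finite-graph energy minimization agrees with the ambient projection $P_{\diamondsuit_n}$, and that $\bigcup_n\diamondsuit_n$ is dense in $\diamondsuit$ — this last point being what guarantees the limit is genuinely the free current rather than a proper sub-projection. Once these identifications are secured, the convergence and the energy computation are routine Hilbert-space geometry.
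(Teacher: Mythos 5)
Your proposal is correct and is essentially the argument behind the paper's source for this statement (the paper cites Proposition 9.1 of \cite{lyons-peres} rather than reproving it): there too the finite-network unit current is identified as $\chi-P_{\diamondsuit_n}\chi$, the increasing cycle spaces $\diamondsuit_n$ with $\overline{\bigcup_n\diamondsuit_n}=\diamondsuit$ give strong convergence of the projections, and the energy identity $\mathrm{Energy}(\theta^{(a,b)}_\mathrm{F})=\theta^{(a,b)}_\mathrm{F}(a,b)\,\mathfrak{c}^{-1}(a,b)$ follows from the same orthogonality computation against the single-edge flow $\chi$. The bookkeeping points you flag (sourceless flows $=$ cycle space on a finite graph, Thomson's principle $=$ nearest-point projection, density of $\bigcup_n\diamondsuit_n$ since every cycle is finite) are all standard and hold as you expect, so there is no gap.
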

\begin{prop}[wired current; Proposition 9.2 in \cite{lyons-peres}]\label{wired def}
Let $(a,b)$ be an edge in $\mathcal{G}_1$ and $\theta_n$ be the unit current flow in $(\mathcal{G}_n^\mathrm{W},\mathfrak{c}_n^\mathrm{W})$ from $a$ to $b$. There exists $\theta^{(a,b)}_\mathrm{W}\in l^2(\mathcal{E},\mathfrak{c})$, such that $\mathrm{Energy}(\theta_n - \theta^{(a,b)}_\mathrm{W})\to 0 $ as $n \to \infty$ and 
$\mathrm{Energy}(\theta^{(a,b)}_\mathrm{W}) = \theta^{(a,b)}_\mathrm{W}(a,b)\mathfrak{c}^{-1}(a,b)$, which is the minimum energy among all unit flows from $a$ to $b$.
\end{prop}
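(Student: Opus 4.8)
The plan is to realize the finite-network unit currents $\theta_n$ as orthogonal projections in the energy geometry of $l^2(\mathcal{E},\mathfrak{c})$ and then pass to the limit through a \emph{monotone} family of subspaces, in the same spirit as the free current in Proposition \ref{free def} but working with stars rather than cycles. Write $\chi\in l^2(\mathcal{E},\mathfrak{c})$ for the antisymmetric edge indicator of $(a,b)$, i.e.\ $\chi(a,b)=1=-\chi(b,a)$ and $\chi\equiv 0$ on every other edge, so that $\mathrm{Energy}(\chi)=\mathfrak{c}^{-1}(a,b)$ and $\chi$ is a unit flow from $a$ to $b$. For each $n$ let $\mathbf{S}_n:=\nabla\{f:\mathcal{VG}\to\mathbb{R}:\operatorname{supp}f\subseteq\mathcal{VG}_n\}$ be the finitely supported star space attached to $\mathcal{G}_n$. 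The first step is the classical fact that in a finite network the unit current from $a$ to $b$ is the unique energy-minimizing unit flow, equivalently the $\langle\cdot,\cdot\rangle_{\mathrm{anti}}$-orthogonal projection of $\chi$ onto the star space (using that the cycle space is its orthocomplement, so the projection preserves the divergence and hence remains a unit flow); I would apply this inside $\mathcal{G}_n^\mathrm{W}$.

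The crucial bookkeeping step is to check that this finite projection, read inside $l^2(\mathcal{E},\mathfrak{c})$, is exactly $\theta_n=P_{\mathbf{S}_n}\chi$. A function on $\mathcal{G}_n^\mathrm{W}$ lifts to a function on $\mathcal{VG}$ that is constant off $\mathcal{VG}_n$, and modulo constants these are precisely the functions supported in $\mathcal{VG}_n$; extending the wired voltage to be constant off $\mathcal{VG}_n$ lets me regard $\theta_n=\nabla v_n$ as an element of $l^2(\mathcal{E},\mathfrak{c})$. I would then verify by direct computation that bundling the boundary edges into the glued vertex $z_n$ preserves both the energy and the inner product $\langle\cdot,\cdot\rangle_{\mathrm{anti}}$ of gradients supported in $\mathcal{VG}_n$: for a boundary vertex $x$ with exterior neighbours $y_i$, the conductance-weighted contributions $\sum_i\mathfrak{c}(x,y_i)[\,\cdot\,]$ collapse to the single term carried by the edge to $z_n$ of conductance $\sum_i\mathfrak{c}(x,y_i)$. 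Consequently the finite orthogonality relation $\chi-\theta_n\perp\mathbf{S}_n$ transfers verbatim to $l^2(\mathcal{E},\mathfrak{c})$, giving $\theta_n=P_{\mathbf{S}_n}\chi$.

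With this in hand the convergence is immediate. The spaces $\mathbf{S}_n$ are increasing in $n$ and $\bigcup_n\mathbf{S}_n=\nabla\{\text{finitely supported }f\}$, whose closure is $\overline{\mathbf{S}}:=\nabla(\mathbf{D}_0(\mathcal{G}))$. The standard Hilbert-space fact that projections onto an increasing family of closed subspaces converge strongly to the projection onto the closure of their union yields $\mathrm{Energy}(\theta_n-\theta^{(a,b)}_\mathrm{W})\to0$ with $\theta^{(a,b)}_\mathrm{W}:=P_{\overline{\mathbf{S}}}\chi\in l^2(\mathcal{E},\mathfrak{c})$. The energy identity is then a one-line projection computation: since $\chi-\theta^{(a,b)}_\mathrm{W}\perp\overline{\mathbf{S}}\ni\theta^{(a,b)}_\mathrm{W}$, we get $\mathrm{Energy}(\theta^{(a,b)}_\mathrm{W})=\langle\theta^{(a,b)}_\mathrm{W},\chi\rangle_{\mathrm{anti}}=\theta^{(a,b)}_\mathrm{W}(a,b)\,\mathfrak{c}^{-1}(a,b)$, the last equality because $\chi$ is supported on the single unoriented edge $(a,b)$.

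For minimality I would identify $\overline{\mathbf{S}}^\perp$ with the finite-energy source-free flows: by the computation $\langle\theta,\nabla\mathbf{1}_x\rangle_{\mathrm{anti}}=\sum_{y\sim x}\theta(x,y)$, orthogonality of $\theta$ to $\nabla f$ for every finitely supported $f$ is exactly the condition that $\theta$ has zero divergence at every vertex. Hence the set of all unit flows from $a$ to $b$ is the affine space $\chi+\overline{\mathbf{S}}^\perp$, and minimizing $\mathrm{Energy}$ over it is precisely orthogonal projection of $\chi$ onto $\overline{\mathbf{S}}$, namely $\theta^{(a,b)}_\mathrm{W}$; so $\theta^{(a,b)}_\mathrm{W}$ is the minimum-energy unit flow. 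I expect the main obstacle to be the second step---checking that the finite wired projection coincides with $P_{\mathbf{S}_n}$ in $l^2(\mathcal{E},\mathfrak{c})$, i.e.\ that the edge-bundling at $z_n$ is an isometry on the relevant gradients and carries the orthogonality relation across---whereas the monotone-convergence and minimality steps are routine Hilbert-space arguments once the projection picture is in place.
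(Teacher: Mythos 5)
Your proposal is correct, and it coincides with the proof of the result the paper itself does not prove but cites (Proposition 9.2 of \cite{lyons-peres}): there, too, the wired current $\theta_n$ is identified with the orthogonal projection $P_{\mathbf{S}_n}\chi$ of the unit dipole onto the star space of $\mathcal{VG}_n$ (your edge-bundling isometry at $z_n$ is exactly the verification needed, since the stars of $\mathcal{G}_n^\mathrm{W}$ other than the one at $z_n$ span its star space), and the limit, energy identity, and Thomson-principle minimality via the affine space $\chi+\overline{\mathbf{S}}^\perp$ follow by the same monotone-projection Hilbert-space argument you give. No gaps; your argument is essentially the same approach as the source the paper relies on.
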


\begin{defn} \label{free/wired currents def}
    Since $\mathrm{Energy}(\theta)=0\Leftrightarrow \theta$ is the zero function, $\theta^{(a,b)}_\mathrm{F}$ and $\theta^{(a,b)}_\mathrm{W}$ defined in previous propositions are both unique. For $a,b\in\mathcal{VG}$, let $x_1=a, x_2, x_3, ..., x_n=b$ be any path from $a$ to $b$. Then the \textbf{unit free current from $a$ to $b$} and the \textbf{unit wired current from $a$ to $b$} are defined as
    \[
        \sum_{k=1}^{n-1}\theta^{(x_k,x_{k+1})}_\mathrm{F}\quad\text{and}\quad
        \sum_{k=1}^{n-1}\theta^{(x_k,x_{k+1})}_\mathrm{W}\quad,\ \text{respectively.}
    \]
\end{defn}
 
\begin{thm}[Proposition 10.14 in \cite{lyons-peres}]\label{equis for F=W}
    For any $(\mathcal{G},\mathfrak{c})$, let $\mathbb{R}$ denote constant functions, then
    \begin{align*}
        \text{\ref{F = W} $\Leftrightarrow$ free and wired currents are the same, i.e., currents are unique $\Leftrightarrow$ \ref{HD = R}.}
    \end{align*}
\end{thm}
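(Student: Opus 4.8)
The plan is to place all three conditions inside the flow Hilbert space $l^2(\mathcal{E},\mathfrak{c})$ and read the equivalences off a single orthogonal (Hodge-type) decomposition. Let $\overline{\star}$ be the closed span of the finitely supported gradients $\{\nabla\mathbf{1}_x:x\in\mathcal{VG}\}$; since $\nabla$ is bounded with $\mathrm{Energy}(f)=\mathrm{Energy}(\nabla f)$, one has $\overline{\star}=\nabla\mathbf{D}_0(\mathcal{G})$. Let $\overline{\diamond}$ be the closed span of the flows around finite cycles. A finitely supported gradient paired against a cycle via $\langle\cdot,\cdot\rangle_{\mathrm{anti}}$ measures the net flow of the cycle out of a vertex, which is $0$, so $\overline{\star}\perp\overline{\diamond}$. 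Setting $\mathcal{H}:=(\overline{\star}\oplus\overline{\diamond})^{\perp}$, I obtain
\[
    l^2(\mathcal{E},\mathfrak{c})=\overline{\star}\oplus\mathcal{H}\oplus\overline{\diamond}.
\]
The first key step is to identify $\mathcal{H}$: a flow $\theta$ lies in $\mathcal{H}$ iff $\theta\perp\overline{\star}$ (the node law, i.e.\ $\theta$ is divergence free) and $\theta\perp\overline{\diamond}$ (the cycle law, i.e.\ $\theta=\nabla u$), which forces $\theta=\nabla u$ with $u$ discrete harmonic of finite energy. Hence $\mathcal{H}=\nabla\,\mathbf{HD}(\mathcal{G})$, and since $\ker\nabla=\mathbb{R}$ this gives $\mathcal{H}\cong\mathbf{HD}(\mathcal{G})/\mathbb{R}$. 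This is the flow form of Royden's decomposition $\mathbf{D}(\mathcal{G})=\mathbf{D}_0(\mathcal{G})\oplus\mathbf{HD}(\mathcal{G})$.

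Next I express the currents as projections of a reference flow. Fix an edge $(a,b)$, let $\chi_{(a,b)}$ be the single-edge unit flow, and note that the unit flows from $a$ to $b$ form the affine set $\chi_{(a,b)}+(\mathcal{H}\oplus\overline{\diamond})$, because $\mathcal{H}$ and $\overline{\diamond}$ are exactly the divergence-free flows. By Proposition \ref{wired def} the wired current is the \emph{minimum-energy} unit flow, so it is the foot of the perpendicular from the origin to this affine set, namely $\theta^{(a,b)}_{\mathrm{W}}=P_{\overline{\star}}\,\chi_{(a,b)}$; the free current of Proposition \ref{free def}, which is the exhaustion limit whose potentials need not vanish at infinity, is $\theta^{(a,b)}_{\mathrm{F}}=P_{\overline{\star}\oplus\mathcal{H}}\,\chi_{(a,b)}=P_{\nabla\mathbf{D}}\,\chi_{(a,b)}$ (one checks both projections are genuine unit flows since subtracting a divergence-free component preserves $\mathrm{div}=\delta_a-\delta_b$). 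Consequently
\[
    \theta^{(a,b)}_{\mathrm{F}}-\theta^{(a,b)}_{\mathrm{W}}=P_{\mathcal{H}}\,\chi_{(a,b)},
    \qquad
    \mathrm{Energy}\big(\theta^{(a,b)}_{\mathrm{F}}\big)=\mathrm{Energy}\big(\theta^{(a,b)}_{\mathrm{W}}\big)+\big\|P_{\mathcal{H}}\,\chi_{(a,b)}\big\|^2 .
\]

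This identity yields \ref{HD = R}$\iff$(currents unique) at once. If $\mathbf{HD}(\mathcal{G})=\mathbb{R}$ then $\mathcal{H}=\{0\}$, so $P_{\mathcal{H}}=0$ and $\theta^{(a,b)}_{\mathrm{F}}=\theta^{(a,b)}_{\mathrm{W}}$ for every edge, hence for all $a,b$ by Definition \ref{free/wired currents def}. Conversely, if free and wired currents coincide on every edge then $P_{\mathcal{H}}\chi_{(a,b)}=0$ for each oriented edge; the single-edge flows span the finitely supported flows, which are dense in $l^2(\mathcal{E},\mathfrak{c})$, so $P_{\mathcal{H}}=0$, giving $\mathcal{H}=\{0\}$ and $\mathbf{HD}(\mathcal{G})=\mathbb{R}$. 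For \ref{F = W}$\iff$(currents unique) I would invoke the transfer-current (Burton–Pemantle) theorem in its free/wired form: the $\mathfrak{c}$-FSF and $\mathfrak{c}$-WSF are determinantal processes on $\mathcal{EG}$ whose kernels are the free and wired transfer currents $Y_{\mathrm{F}}(e,f)=\theta^{e}_{\mathrm{F}}(f)$ and $Y_{\mathrm{W}}(e,f)=\theta^{e}_{\mathrm{W}}(f)$. Equal currents give equal kernels, hence equal laws; conversely the symmetric kernel is recovered from the law through $Y(e,e)=\mathbb{P}[e\in\mathsf{F}]$ and $Y(e,f)^2=Y(e,e)Y(f,f)-\mathbb{P}[e,f\in\mathsf{F}]$, the signs being fixed by the orientation convention, so equal laws force $Y_{\mathrm{F}}=Y_{\mathrm{W}}$. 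The main obstacle is precisely this last step: the determinantal identification and the recovery of the kernel from the measure (the \ref{F = W}$\Rightarrow$(currents unique) direction), together with the rigorous exhaustion-limit identifications $\theta_{\mathrm{W}}=P_{\overline{\star}}\chi$ and $\theta_{\mathrm{F}}=P_{\overline{\star}\oplus\mathcal{H}}\chi$. These are the points where Lyons–Peres do the substantive work, and I would cite their Chapters 9–10 for them rather than reprove the transfer-current theorem here.
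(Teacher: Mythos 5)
The paper does not actually prove this statement---it is quoted verbatim as Proposition 10.14 of \cite{lyons-peres}---so the relevant comparison is with the Lyons--Peres proof. Your Hodge-type decomposition $l^2(\mathcal{E},\mathfrak{c})=\overline{\star}\oplus\mathcal{H}\oplus\overline{\diamond}$ with $\mathcal{H}=\nabla\mathbf{HD}(\mathcal{G})$, the identifications $\theta^e_{\mathrm{W}}=P_{\overline{\star}}\chi^{e}$ and $\theta^e_{\mathrm{F}}=P_{\overline{\star}\oplus\mathcal{H}}\chi^{e}$, the resulting identity $\theta^e_{\mathrm{F}}-\theta^e_{\mathrm{W}}=P_{\mathcal{H}}\chi^{e}$, and the density argument giving \ref{HD = R} $\Leftrightarrow$ uniqueness of currents are all correct and are precisely the Lyons--Peres argument (note that your claim $\overline{\star}=\nabla\mathbf{D}_0(\mathcal{G})$, i.e.\ closedness of the image, uses transience, which is a standing assumption here).

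The genuine gap is in your direction \ref{F = W} $\Rightarrow$ (currents unique). You propose to recover the full transfer-current kernel from the forest law via $Y(e,f)^2=Y(e,e)Y(f,f)-\mathbb{P}[e,f\in\mathsf{F}]$ with ``the signs fixed by the orientation convention.'' That last clause does not work: a determinantal measure with symmetric kernel determines the kernel only up to conjugation by a diagonal $\pm1$ matrix, so equality of the FSF and WSF laws yields only $Y_{\mathrm{F}}(e,f)^2=Y_{\mathrm{W}}(e,f)^2$ off the diagonal, and there is no canonical way to pin down the signs pair by pair from the measure alone. Fortunately the off-diagonal entries are not needed, and the repair is already contained in your own Pythagorean identity. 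By Kirchhoff's theorem applied along the exhaustion, $\mathbb{P}[e\in\mathrm{FSF}]=\theta^{e}_{\mathrm{F}}(e)$ and $\mathbb{P}[e\in\mathrm{WSF}]=\theta^{e}_{\mathrm{W}}(e)$; combining this with the energy identities of Propositions \ref{free def} and \ref{wired def} and with $\mathrm{Energy}(\theta^e_{\mathrm{F}})=\mathrm{Energy}(\theta^e_{\mathrm{W}})+\big\|P_{\mathcal{H}}\chi^{e}\big\|^2$ gives
\begin{equation*}
\mathbb{P}[e\in\mathrm{FSF}]-\mathbb{P}[e\in\mathrm{WSF}]=\mathfrak{c}(e)\,\big\|P_{\mathcal{H}}\chi^{e}\big\|^2\ \ge 0,
\end{equation*}
so equality of the laws (indeed, merely of the single-edge marginals) forces $P_{\mathcal{H}}\chi^{e}=0$ for every edge, and your density argument then gives $\mathcal{H}=\{0\}$. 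This one-edge-marginal route is essentially how \cite{lyons-peres} argue (they additionally note the stochastic domination of WSF by FSF, which makes equality of one-edge marginals equivalent to equality of laws); your forward direction, equal currents $\Rightarrow$ equal kernels $\Rightarrow$ equal determinantal laws, is fine as written.
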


\begin{defn}[Proposition 2.12 in \cite{lyons-peres}]\label{wired infty current}
    Let $\theta_n$ be the unit current in $(\mathcal{G}_n^\mathrm{W},\mathfrak{c}_n^\mathrm{W})$ from $a$ to $z_n$. The pointwise limit $\theta$ of $\{\theta_n\}$ on $(\mathcal{G}, \mathfrak{c})$ is called the \textbf{unit wired current from $a$ to $\infty$}, which is the unique unit flow on $\mathcal{G}$ from $a$ to $\infty$ of minimum energy. Let $v_n$ be the voltage on $(\mathcal{G}_n^\mathrm{W},\mathfrak{c}_n^\mathrm{W})$ corresponding to $\theta_n$ and with $v_n(z_n) := 0$. Then $v := \lim v_n$ exists on $\mathcal{G}$. Moreover, $v$ satisfies $\nabla v=\theta$, $v(a) = \mathrm{Energy}(v) = \mathcal{R}[a \leftrightarrow \infty]$, and $v(x)/v(a) = P_x[\tau_a < \infty]$ for all $x$.
\end{defn}

For any current $\theta_0$ of a finite weighted graph, let its corresponding voltage be $u_0$, then $\theta_0$ satisfies the \textbf{Kirchhoff's Cycle Law}: for any cycle $x_1,x_2,....,x_{n+1}=x_1$,
\[
    \sum_{k=1}^{n} \theta_0(x_k,x_{k+1})\mathfrak{c}^{-1}(x_k,x_{k+1})=\sum_{k=1}^{n} [u_0(x_k)-u_0(x_{k+1})]=0. 
\]
This property is preserved when taking a limit in either energy or pointwise sense. Therefore, fix $o\in\mathcal{VG}$, for any $\theta\in\mathbf{\Theta}$, the following function 
\[
    u(o):=0, u(a):=\sum_{k=1}^{n}\theta(x_k,x_{k+1})\mathfrak{c}^{-1}(x_k,x_{k+1}), \text{ where }x_1=a,x_2,...,x_{n+1}=o\text{ is any path from $a$ to $o$, }
\]
is well-defined. Moreover, $u+\mathbb{R}=\{u:\mathcal{VG}\to\mathbb{R}\mid \nabla u=\theta\}$. 

We now extend the previous definitions of currents from vertices $a, b$ to disjoint finite sets $A, B \subset \mathcal{VG}$, where $B$ could be $\{\infty\}$ for wired case. Consider the free/wired current $\theta$ from $a$ to $b$ after identifying vertices in $A$ to $a$ and $B$ to $b$. By Kirchhoff's Cycle Law, $\{u:\mathcal{VG}\to\mathbb{R}\mid \nabla u=\theta\}$ is a family of functions that differ by an arbitrary constant. Let $u'$ be an element in it, then $\Delta u' (a)\ne 0$. Define $u=u'(a)$ on $A$, $u=u'(b)$ on $B$, and $u=u'$ on $(A\sqcup B)^c$. The \textbf{unit free/wired current from $A$ to $B$} is defined as $[\Delta u' (a)]^{-1}\nabla u$, which still satisfies the Kirchhoff's Cycle Law. 

\begin{defn}
    The current space $\mathbf{\Theta}$ is the vector space spanned by all currents of the following form: the unit free/wired current from $A$ to $B$, and the unit wired current from $A$ to $\infty$, where $A$ and $B$ are arbitrary disjoint finite subsets of $\mathcal{VG}$. 

    By Kirchhoff's Cycle Law, for any $\theta\in\mathbf{\Theta}$, $\{u:\mathcal{VG}\to\mathbb{R}\mid \nabla u=\theta\}$ is still a family of functions that differ by an arbitrary constant. Define it as the set of voltages that corresponds to $\theta$. 
    Then define the voltage space by 
    \[
        \mathbf{U}=\{u:\mathcal{VG}\to\mathbb{R}\mid \nabla u\in \mathbf{\Theta}\}.
    \]
    In particular, the \textbf{free (resp.\ wired) voltage function with boundary values $u_1$ at $A$ and $u_2$ at $B$} ($u_1>u_2$) is defined as the unique function $u$ with the boundary values such that $\nabla u$ is a constant multiple of the unit free (resp.\ wired) current from $A$ to $B$, where $B$ could be $\infty$ for wired condition. We say $u$ has \textbf{boundary condition on $S$} if there exists $(A,B)$ and $u_1>u_2$ such that $A\sqcup B\subset S$ and $u$ is the free/wired voltage function with boundary values $u_1$ at $A$ and $u_2$ at $B$.  
    
    A \textbf{net current} function is a function $i$ on $\mathcal{VG}$, such that for some $\theta\in\mathbf{\Theta}$, $i(x)=\sum_{y\sim x} \theta(x,y)$, or equivalently, $i=\Delta u$ for some $u\in\mathbf{U}$. 
    For $u\in\mathbf{U}$ with boundary conditions on a finite set $S$, we say current $\theta=\nabla u$ and net current $i=\Delta u$ have \textbf{boundary conditions on $S$}. In particular, $i|_{S^c}\equiv 0$, since this property is preserved when taking a limit in either energy or pointwise sense.
\end{defn}

\begin{lem}\label{U in BD}
    Any element in $\mathbf{U}$ is bounded and has finite Dirichlet energy.
\end{lem}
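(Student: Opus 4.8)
The plan is to reduce both claims to the generating currents of $\mathbf{\Theta}$ and to treat finite energy and boundedness separately: finite energy is essentially built into the construction, while boundedness is the substantive part.

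First I would dispose of finite Dirichlet energy. Since $\mathrm{Energy}(u)=\mathrm{Energy}(\nabla u)$ for every $u:\mathcal{VG}\to\mathbb{R}$, it suffices to check that $\mathbf{\Theta}\subset l^2(\mathcal{E},\mathfrak{c})$. Each generator has finite energy: $\theta^{(a,b)}_\mathrm{F}$ and $\theta^{(a,b)}_\mathrm{W}$ lie in $l^2(\mathcal{E},\mathfrak{c})$ by Propositions \ref{free def} and \ref{wired def}; the unit wired current from $a$ to $\infty$ has energy $\mathcal{R}[a\leftrightarrow\infty]<\infty$ by Definition \ref{wired infty current} (here transience is used); and the free/wired currents from $A$ to $B$ are, by the construction preceding their definition, finite scalar multiples of single-vertex free/wired currents and hence also lie in $l^2(\mathcal{E},\mathfrak{c})$. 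As $\mathbf{\Theta}$ is the linear span of these, every $\theta\in\mathbf{\Theta}$ has finite energy, so every $u\in\mathbf{U}$ has finite Dirichlet energy.

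For boundedness I would again argue by linearity. Given $u\in\mathbf{U}$, write $\nabla u=\sum_{i=1}^N c_i\theta_i$ with each $\theta_i$ a generator, pick a voltage $u_i$ for each $\theta_i$, and note that $u-\sum_i c_iu_i$ has vanishing gradient and is therefore constant on the connected graph $\mathcal{G}$. Thus it is enough to bound the voltage of a single generator. The wired current from $A$ to $\infty$ is immediate: by Definition \ref{wired infty current}, its voltage $v$ (normalized to vanish at $\infty$) satisfies $0\le v(x)/v(a)=P_x[\tau_a<\infty]\le 1$, so $0\le v\le \mathcal{R}[a\leftrightarrow\infty]<\infty$.

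The remaining generators I would handle by the discrete maximum principle on the finite approximations together with a pointwise limit, and this is where I expect the main difficulty. Take the free current $\theta^{(a,b)}_\mathrm{F}$; the wired case, and the reduction of the set currents from $A$ to $B$ to this one (by identifying $A$ and $B$ each to a single vertex), are analogous. Let $\theta_n$ and $v_n$ be the unit current flow and its voltage in $(\mathcal{G}_n,\mathfrak{c}_n)$ from $a$ to $b$, normalized so that $v_n(b)=0$. Since $v_n$ is discrete harmonic on $\mathcal{VG}_n\setminus\{a,b\}$ with a unit source at $a$ and a unit sink at $b$, the maximum principle gives $0=v_n(b)\le v_n(x)\le v_n(a)$ for all $x\in\mathcal{VG}_n$, while for a unit flow $v_n(a)=\mathrm{Energy}(\theta_n)$. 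By Proposition \ref{free def}, $\mathrm{Energy}(\theta_n)\to\mathrm{Energy}(\theta^{(a,b)}_\mathrm{F})<\infty$, so $\sup_n v_n(a)<\infty$. Finally, energy convergence $\mathrm{Energy}(\theta_n-\theta^{(a,b)}_\mathrm{F})\to 0$ forces $\theta_n(e)\to\theta^{(a,b)}_\mathrm{F}(e)$ on every edge $e$, and since each $v_n(x)$ is a finite path sum of values $\theta_n\,\mathfrak{c}^{-1}$, this yields $v_n(x)\to v(x)$ pointwise, where $v$ is the voltage of $\theta^{(a,b)}_\mathrm{F}$ with $v(b)=0$. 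Passing to the limit in $0\le v_n(x)\le v_n(a)$ then gives $0\le v(x)\le\sup_n v_n(a)<\infty$. The delicate point is that the maximum-principle bound must survive the limit; I would address this by tracking the uniform bound $\sup_n v_n(a)<\infty$ and the pointwise convergence simultaneously, rather than relying on energy convergence alone, which controls $v$ only in energy and does not by itself yield a supremum bound.
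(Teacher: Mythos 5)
Your proof is correct, and for the substantive part---boundedness of the $a$-to-$b$ voltages---it takes a genuinely different route from the paper. The skeletons agree: both reduce by linearity to the generators of $\mathbf{\Theta}$, both dispatch finite energy via Propositions \ref{free def}, \ref{wired def} and Definition \ref{wired infty current}, and your Case for the wired current to $\infty$ is identical to the paper's Case 3. But for the free (and wired) current between two vertices, the paper argues directly on the infinite graph: it asserts that harmonicity of $u$ off $\{a,b\}$ is preserved under limits, invokes a maximum principle to conclude $u(b)=\inf u$ and $u(a)=\sup u$, and then bounds $u(a)-u(b)$ by telescoping along a path and using $|u_k(x_k)-u_k(x_{k+1})|=\mathrm{Energy}(\theta^{(x_k,x_{k+1})}_\mathrm{F})$ from Proposition \ref{free def}. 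You instead apply the maximum principle only on the finite approximations $(\mathcal{G}_n,\mathfrak{c}_n)$, extract the uniform bound $v_n(a)=\mathrm{Energy}(\theta_n)\to\mathrm{Energy}(\theta^{(a,b)}_\mathrm{F})$, and transfer it through edgewise (hence pointwise, via path sums) convergence. Your version buys rigor precisely where the paper is tersest: on an infinite graph, harmonicity off $\{a,b\}$ alone does not force the extrema to sit at $a$ and $b$, so the finite-graph bound surviving a pointwise limit is arguably the honest justification of the paper's one-line maximum-principle claim. What the paper's telescoping buys, in exchange, is a built-in treatment of non-adjacent endpoints: Proposition \ref{free def} as stated gives energy convergence only when $(a,b)$ is an edge, whereas after identifying $A$ to $a$ and $B$ to $b$ the two vertices need not be adjacent in the quotient. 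Your sentence deferring the set currents to the edge case therefore needs one extra line: either decompose the quotient current as a path-sum of edge currents as in Definition \ref{free/wired currents def} and bound each voltage summand separately (the paper's device), or observe that superposition on finite networks gives $\theta_n^{(a,b)}=\sum_k\theta_n^{(x_k,x_{k+1})}$, so energy convergence for non-adjacent pairs follows from the edge case by the triangle inequality. This is a routine patch, not a gap, and with it your argument is complete.
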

\begin{proof}
    Note that both boundedness and finite Dirichlet energy are preserved under finite linear combinations. Therefore, we only need to prove the following three corner cases. For the voltage from $A$ to $B$, it boils down to the case of $a$ to $b$ in the new graph obtained by identifying $A$ to $a$ and $B$ to $b$. 
    
    \textit{Case 1: $u\in\mathbf{U}$ corresponds to the unit free current from $a$ to $b$. }Let the corresponding net current be $i$. Since the $\Delta u_{(\{a,b\})^c}=i|_{(\{a,b\})^c}\equiv 0$ is preserved when taking a limit in either energy or pointwise sense, by the maximum principle of harmonic functions, 
    \begin{equation}\label{u_A u_B}
        u(b)=\inf_{x\in\mathcal{VG}} u(x), u(a)=\sup_{x\in\mathcal{VG}} u(x), \text{ therefore } \sup_{x,y\in\mathcal{
        VG}}|u(x)-u(y)|=u(a)-u(b). 
    \end{equation}
    Let $x_1=a, x_2,..., x_n=b$ be a path from $a$ to $b$ such that $\{x_k\}_{k=1}^n\cap (\{a,b\})=\{x_1,x_n\}$. Take $u_k\in\mathbf{U}$ such that $u_k+\mathbb{R}=\nabla^{-1}\theta^{(x_k,x_{k+1})}_\mathrm{F}$, then $u+\mathbb{R}=\sum_{k=1}^{n-1} u_k +\mathbb{R}$. Then
    \[
        \mathrm{Energy}(u)=\mathrm{Energy}(\sum_{k=1}^{n-1}u_k)\le (n-1)\sum_{k=1}^{n-1}\mathrm{Energy}(u_k)
        =(n-1)\sum_{k=1}^{n-1}\mathrm{Energy}(\theta^{(x_k,x_{k+1})}_\mathrm{F})<\infty, 
    \]
    \[
        \sup_{x,y\in\mathcal{VG}}|u(x)-u(y)|=u(a)-u(b)\le\sum_{k=1}^{n-1}\sum_{j=1}^{n-1} |u_k(x_j)-u_k(x_{j+1})|
        \overset{\eqref{u_A u_B}}{\le}\sum_{k=1}^{n-1} (n-1) |u_k(x_k)-u_k(x_{k+1})|. 
    \]
    By Proposition \ref{free def}, $|u_k(x_k)-u_k(x_{k+1})|=\mathrm{Energy}(\theta^{(x_k,x_{k+1})}_\mathrm{F})<\infty$, so $\sup_{x,y\in\mathcal{VG}}|u(x)-u(y)|<\infty$.  

    \textit{Case 2: $u\in\mathbf{U}$ corresponds to the unit wired current from $a$ to $b$. }This is analogous to Case 1. 

    \textit{Case 3: $u\in\mathbf{U}$ corresponds to the unit wired current from $a$ to $\infty$. }By Definition \ref{wired infty current}, $u\ge 0$.     By Theorem 2.11 in \cite{lyons-peres}, transience implies that $\mathcal{R}[a \leftrightarrow \infty]$, the effective resistance between $a$ and $\infty$, is finite. Therefore, 
    \[
        0\le u(x)=u(a)P_x[\tau_a < \infty]\le \mathcal{R}[a \leftrightarrow \infty]<\infty, \quad \mathrm{Energy}(u)=\mathcal{R}[a \leftrightarrow \infty]<\infty.
    \]
    
    This gives the proof.
\end{proof}

\subsection{Convergence of harmonic functions}
In this section we will prove Proposition \ref{HD to lim}. Consider (wired) voltages and net currents on the infinite $(\mathcal{G},\mathfrak{c})$ with boundary conditions only on $K\cup\{\infty\}$. We first show that under \ref{HD = R}, all such voltages can be represented by an electrical network with $|K|+1$ points, i.e., the voltage normalized by ``voltage at $\infty$'' can be obtained via a symmetric linear transformation of the net current (see Lemma \ref{G symm}). By Lemma \ref{U in BD}, we interpret $\lim_{n\to\infty} u(X_n)$ as ``voltage at $\infty$'' (guaranteed by Lemma \ref{lim=cons}). 
Once this framework is established, $\mathrm{e}_K(y)$ corresponds to the (wired) net current flowing out from $y$ under a certain boundary condition, while $h^y_K$ corresponds to the (free) voltage function under another boundary condition. \ref{HD = R} and Theorem \ref{equis for F=W} imply that free and wired currents coincide, eliminating the need to distinguish between them. Using the linear relation between net current and normalized voltage, we then derive equation \eqref{hy&cap Xn}.

\begin{lem}\label{lim=cons}
    For $(\mathcal{G},\mathfrak{c})$, if $\textbf{HD}(\mathcal{G})=\mathbb{R}$, for any $f\in\mathbf{D}$, $\lim_{n\to \infty} f(X_n)$ is a constant a.s. This constant is independent of the choice of $X_0$. 
\end{lem}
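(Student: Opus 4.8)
The plan is to combine the Royden decomposition of the Dirichlet space with the almost-sure convergence of Dirichlet functions along the transient walk. Recall the Royden decomposition $\mathbf{D}(\mathcal{G})=\mathbf{D}_0(\mathcal{G})\oplus\mathbf{HD}(\mathcal{G})$ (see \cite{lyons-peres}), in which $\mathbf{HD}(\mathcal{G})$ is the energy-orthogonal complement of the finitely supported functions. Under the hypothesis $\mathbf{HD}(\mathcal{G})=\mathbb{R}$ this becomes $\mathbf{D}(\mathcal{G})=\mathbf{D}_0(\mathcal{G})\oplus\mathbb{R}$, so every $f\in\mathbf{D}$ decomposes uniquely as $f=f_0+c$ with $f_0\in\mathbf{D}_0(\mathcal{G})$ and $c\in\mathbb{R}$. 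The constant $c$ is determined by $f$ alone, so the entire statement reduces to the claim that $f_0(X_n)\to 0$ almost surely, for every starting vertex; granting this, $f(X_n)=f_0(X_n)+c\to c$, a single constant valid for all choices of $X_0$.

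To prove $f_0(X_n)\to 0$ I would argue in two stages. If $g\in\mathbf{D}$ is finitely supported, then by transience the walk visits the finite set $\mathrm{supp}(g)$ only finitely often, so $g(X_n)=0$ for all large $n$ and the limit is $0$. Since $\mathbf{D}_0(\mathcal{G})$ is by definition the Dirichlet-norm closure of the finitely supported functions, I would then choose finitely supported $g_k$ with $\|f_0-g_k\|\to 0$ and upgrade the convergence through a maximal inequality controlling $\sup_n|(f_0-g_k)(X_n)|$ in terms of $\|f_0-g_k\|$. The quantitative input is the increment estimate $\sum_{n\ge 0}\mathbb{E}_x[(u(X_{n+1})-u(X_n))^2]\le 2\,g(x,x)\,\mathrm{Energy}(u)$, valid for any $u\in\mathbf{D}$, where $g(x,y)=G(x,y)/\pi(y)$ is the symmetric Green function and $g(x,x)<\infty$ by transience; together with Doob's $L^2$ inequality this bounds the relevant maximum.

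The hypothesis $\mathbf{HD}(\mathcal{G})=\mathbb{R}$ is exactly what makes the harmonic ingredient trivial: the $\mathbf{HD}$-component of $f$ is the constant $c$, whose evaluation along the walk is constant with no argument needed, so the otherwise random and starting-point-dependent boundary limit of the harmonic part collapses to a deterministic scalar. The step I expect to be the main obstacle is the maximal inequality extending convergence from finitely supported functions to all of $\mathbf{D}_0$: because a non-harmonic $u$ makes $u(X_n)$ fail to be a martingale, one must pass to its Doob decomposition $u(X_n)=u(X_0)+M_n+A_n$ and check that the drift term $A_n$ does not spoil the bound for $u\in\mathbf{D}_0$. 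Should a self-contained treatment prove cumbersome, I would instead invoke the packaged statement from \cite{lyons-peres} that $f\in\mathbf{D}_0$ implies $f(X_n)\to 0$ almost surely, which is precisely the input required.
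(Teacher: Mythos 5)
Your proposal is correct and takes essentially the same route as the paper: under $\mathbf{HD}(\mathcal{G})=\mathbb{R}$, apply the Royden decomposition to write $f=f_0+c$ with $f_0\in\mathbf{D}_0(\mathcal{G})$, then use the fact that $f_0(X_n)\to 0$ a.s.\ for $f_0\in\mathbf{D}_0$, which the paper imports directly as Theorem~9.11 of \cite{lyons-peres} and which your stated fallback invokes verbatim. Your optional self-contained sketch of that input (finitely supported approximation, the crossing estimate via the symmetric Green function, and a maximal inequality) is a reasonable outline of the standard proof of that theorem, but it adds nothing beyond the citation the paper already uses.
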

\begin{proof}
    By transience and Exercise 9.6(f) from \cite{lyons-peres}, we can do Royden Decomposition to $f\in\mathbf{D}$, and uniquely decompose it into the sum of $f_{\mathbf{D}_0}\in\mathbf{D}_0$ and $f_{\mathbf{HD}}\in \mathbf{HD}$. By Theorem 9.11 from \cite{lyons-peres}, $\lim_{n\to \infty} f(X_n)=\lim_{n\to \infty} f_{\mathbf{HD}}(X_n)$. Since $\mathbf{HD}(\mathcal{G})=\mathbb{R}$, we have $f_{\mathbf{HD}}(x)=c$ for all $x$, where the constant $c$ is independent of the choice of $X_0$. Then $\lim_{n\to \infty} f(X_n)=c$ a.s.
\end{proof}

By Lemma \ref{U in BD} and Lemma \ref{lim=cons}, we can define $\alpha: \mathbf{U}\to \mathbb{R}$ such that $\alpha(u)=\lim_{n\to\infty} u(X_n)$. Then $\alpha$ is linear. Define \textbf{the voltage space normalized by ``voltage at $\infty$''} as the vector space
\[
    \textbf{V}=\{v:\mathcal{VG}\to\mathbb{R}\mid \alpha(v)=0\}\subset\mathbf{U}. 
\]
Let $\mathbf{0}$ denote the zero function. Since $\Delta v=\mathbf{0}$ implies $v$ is in $\mathbf{HD}(\mathcal{G})\cap\mathbf{V}$, $\mathbf{HD}(\mathcal{G})=\mathbb{R}$ implies $\Delta$ is injective on $\mathbf{V}$. 

Fix a finite $K\subset\mathcal{VG}$. For any $a\in K$, consider the voltage function $u$ corresponding to unit (wired) current from vertex $a$ to $\infty$, then $\Delta(u)=\mathbf{1}_a$. In this light, the (wired) net current space with boundary conditions only on $K\cup\{\infty\}$ is defined as $\mathbf{I}_K=\{i:\mathcal{VG}\to\mathbb{R}\mid i(x)=0,\forall x\notin K\}\subset\Delta(\mathbf{U})=\Delta(\mathbf{V})$. Since $\Delta$ is injective on $\mathbf{V}$, we can define a bijection $\Delta^{-1}: \mathbf{I}_K\to\Delta^{-1}(\mathbf{I}_K)\subset\mathbf{V}$.

\begin{lem}\label{G symm}
    Green's function $G(x,y)$ is defined as $\frac{1}{\pi(y)}\sum_{n\ge 0}\mathbb{P}^{\mathrm{RW}}_x[X_n=y]$. Define a map $G_K: \mathbf{I}_K\to \Delta^{-1}(\mathbf{I}_K)$ such that $(G_Kf)(x)=\sum_{y\in K} G(x,y)f(y)$. Then 
    \begin{enumerate}
        \item[(i)] $G$ is symmetric.
        \item[(ii)] $G_K=\Delta^{-1}$, so $G_K\Delta $ is an identity map. 
    \end{enumerate}
\end{lem}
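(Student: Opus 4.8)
The plan is to treat the two claims separately, deriving (i) from reversibility and (ii) from the defining Green-function identity together with the abstract framework already set up for $\mathbf{U}$ and $\mathbf{V}$.

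For (i) I would invoke reversibility of the weighted walk. Write $p_n(x,y):=\mathbb{P}^{\mathrm{RW}}_x[X_n=y]$. Since $\mathfrak{c}$ is symmetric, detailed balance $\pi(x)p(x,y)=\mathfrak{c}(x,y)=\pi(y)p(y,x)$ holds and propagates to the $n$-step kernel, giving $\pi(x)p_n(x,y)=\pi(y)p_n(y,x)$ for every $n$. Multiplying $G(x,y)=\pi(y)^{-1}\sum_n p_n(x,y)$ by $\pi(y)$ and substituting $p_n(x,y)=\frac{\pi(y)}{\pi(x)}p_n(y,x)$ yields $\pi(y)G(x,y)=\frac{\pi(y)}{\pi(x)}\sum_n p_n(y,x)=\pi(y)G(y,x)$, so $G(x,y)=G(y,x)$.

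For (ii) the algebraic core is the identity $\Delta_x G(x,y)=\delta_{x,y}$. Write $g(x,y)=\sum_{n\ge0}p_n(x,y)=\pi(y)G(x,y)$ and note that $\Delta u(x)=\sum_{y\sim x}\mathfrak{c}(x,y)[u(x)-u(y)]=\pi(x)[u(x)-\sum_z p(x,z)u(z)]$, i.e.\ $\Delta=\pi\cdot(I-P)$ where $P$ is the transition operator. The first-step decomposition $g(x,y)=\delta_{x,y}+\sum_z p(x,z)g(z,y)$ says $(I-P)_x g(\cdot,y)=\delta_{\cdot,y}$, hence $\Delta_x G(\cdot,y)=\frac{\pi(x)}{\pi(y)}\delta_{x,y}=\delta_{x,y}$. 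Consequently, for $f\in\mathbf{I}_K$ (supported on $K$), $\Delta(G_Kf)(x)=\sum_{y\in K}f(y)\delta_{x,y}$ equals $f(x)$ for $x\in K$ and $0=f(x)$ for $x\notin K$; thus $\Delta(G_Kf)=f$. It then remains only to locate $G_Kf$ in $\mathbf{V}$, after which injectivity of $\Delta$ on $\mathbf{V}$—which follows from $\mathbf{HD}(\mathcal{G})=\mathbb{R}$, as recorded just before the lemma—forces $G_Kf=\Delta^{-1}f$, settling both the well-definedness of $G_K$ and the claim $G_K=\Delta^{-1}$.

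To place $G_Kf$ in $\mathbf{V}$ I would establish two facts. First, membership in $\mathbf{U}$: the identities $G(x,y)=G(y,y)\,\mathbb{P}^{\mathrm{RW}}_x[\tau_y<\infty]$ (first-entrance decomposition) and $v_y(x)/v_y(y)=\mathbb{P}^{\mathrm{RW}}_x[\tau_y<\infty]$ from Definition \ref{wired infty current} show that $G(\cdot,y)=\frac{G(y,y)}{v_y(y)}\,v_y$ is a scalar multiple of the voltage $v_y$ of the unit wired current from $y$ to $\infty$; since $\nabla v_y\in\mathbf{\Theta}$ and $\mathbf{\Theta}$ is a vector space, $G(\cdot,y)\in\mathbf{U}$, and hence so is the finite combination $G_Kf=\sum_{y\in K}f(y)G(\cdot,y)$. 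Second, $\alpha(G_Kf)=0$: it suffices that $\lim_{n\to\infty}G(X_n,y)=0$ a.s.\ for each $y\in K$, which I would prove by a uniformly integrable martingale argument. With $N_y=\#\{m\ge0:X_m=y\}$, transience gives $\mathbb{E}^{\mathrm{RW}}_w[N_y]=g(w,y)<\infty$, so the martingale $M_n=\mathbb{E}^{\mathrm{RW}}_w[N_y\mid\mathcal{F}_n]$ (with $\mathcal{F}_n=\sigma(X_0,\dots,X_n)$) converges a.s.\ to $N_y$; on the other hand $M_n=\#\{m\le n:X_m=y\}+g(X_n,y)-\mathbf{1}_{X_n=y}$, and comparing limits while using that $X_n=y$ only finitely often yields $g(X_n,y)\to0$, i.e.\ $G(X_n,y)\to0$ a.s. Hence $\alpha(G_Kf)=\sum_{y\in K}f(y)\lim_n G(X_n,y)=0$ and $G_Kf\in\mathbf{V}$, completing (ii). The two algebraic identities ($\Delta=\pi(I-P)$ and $\Delta_xG=\delta$) are routine; I expect the main obstacle to be the placement of $G_Kf$ in $\mathbf{V}$, and in particular the transience input $G(X_n,y)\to0$, which is what encodes that the Green potential of a point charge carries no mass at infinity, though it reduces to standard martingale convergence once $\mathbb{E}^{\mathrm{RW}}_w[N_y]<\infty$ is invoked.
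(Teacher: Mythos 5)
Your proposal is correct, and its computational skeleton matches the paper's: part (i) is the reversibility identity $\pi(x)p_n(x,y)=\pi(y)p_n(y,x)$ (which the paper proves by expanding $n$-step probabilities over paths and rearranging the conductance factors — the same fact, derived by hand), and part (ii) rests on $\Delta_x G(x,y)=\mathbf{1}_{x=y}$ followed by applying $\Delta^{-1}$. Where you genuinely go beyond the paper is in legitimizing that last application. The paper simply states ``Note that $\sum_{z\sim x}\mathfrak{c}(x,z)(G(x,y)-G(z,y))=\mathbf{1}_{x=y}$'' and then ``left-multiplies by $\Delta^{-1}$,'' which tacitly requires $G_Kf$ to lie in $\mathbf{V}$ — i.e.\ $G_Kf\in\mathbf{U}$ and $\alpha(G_Kf)=0$ — since $\Delta^{-1}$ is only defined as the inverse of $\Delta$ restricted to $\mathbf{V}$; the paper asserts the codomain in the lemma statement and never checks it. You supply both missing verifications: the identification $G(\cdot,y)=\frac{G(y,y)}{v_y(y)}v_y$ with the wired voltage of Definition \ref{wired infty current} (using $G(x,y)=G(y,y)\mathbb{P}^{\mathrm{RW}}_x[\tau_y<\infty]$), which puts $G(\cdot,y)$ in $\mathbf{U}$, and the uniformly integrable martingale argument with $M_n=\#\{m\le n:X_m=y\}+g(X_n,y)-\mathbf{1}_{X_n=y}$ showing $G(X_n,y)\to0$ a.s., which gives $\alpha(G_Kf)=0$ directly rather than through Lemma \ref{lim=cons}. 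Both steps check out (the first-step decomposition $g(x,y)=\delta_{x,y}+\sum_z p(x,z)g(z,y)$ and $\Delta=\pi(I-P)$ are exactly right, and transience supplies $g(w,y)<\infty$ for the Doob martingale). So your write-up is not merely equivalent but strictly more complete than the paper's proof; the only cost is length, and the martingale step could be shortened by noting $g(X_n,y)=g(y,y)\mathbb{P}^{\mathrm{RW}}_{X_n}[\tau_y<\infty]$ and invoking the standard fact that this hitting probability tends to $0$ along a transient walk.
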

\begin{proof}
    For (i), note that
    \begin{align*}
        G(x,y)&=\sum_{n\ge 0}\frac{1}{\pi(y)}\mathbb{P}^{\mathrm{RW}}_x[X_n=y]\\
        &=\frac{1}{\pi(y)}\mathbf{1}_{x=y}+\sum_{n\ge 1}\sum_{p\text{: path from $x$ to $y$ of length $n$}} \frac{1}{\pi(y)}\frac{\mathfrak{c}(x,p_1)}{\pi(x)}\frac{\mathfrak{c}(p_1,p_2)}{\pi(p_1)}...\frac{\mathfrak{c}(p_{n-1},y)}{\pi(p_{n-1})}\\
        &=\frac{1}{\pi(x)}\mathbf{1}_{x=y}+\sum_{n\ge 1}\sum_{p\text{: path from $x$ to $y$ of length $n$}} \frac{1}{\pi(x)}\frac{\mathfrak{c}(x,p_1)}{\pi(p_1)}\frac{\mathfrak{c}(p_1,p_2)}{\pi(p_2)}...\frac{\mathfrak{c}(p_{n-1},y)}{\pi(y)}\\
        &=\sum_{n\ge 0}\frac{1}{\pi(x)}\mathbb{P}^{\mathrm{RW}}_y[X_n=x]=G(y,x).
    \end{align*}
    For (ii), consider any $f\in\mathbf{I}_K$.
    \begin{align*}
        \Delta(G_Kf)(x)
        =&\sum_{z\sim x}\mathfrak{c}(x,z)((G_Kf)(x)-(G_Kf)(z))\\
        =&\sum_{z\sim x}\mathfrak{c}(x,z)\sum_{y\in K}(G(x,y)-G(z,y))f(y)\\
        =&\sum_{y\in K} f(y)\sum_{z\sim x}\mathfrak{c}(x,z)(G(x,y)-G(z,y)).
    \end{align*}
    Note that $\sum_{z\sim x}\mathfrak{c}(x,z)(G(x,y)-G(z,y))=\mathbf{1}_{x=y}$, therefore $\Delta(G_Kf)(x)=f(x)$. Left-multiplying a $\Delta^{-1}$ to both sides and we get:
    \[
        G_K f=(\Delta^{-1}\Delta) G_K f=\Delta^{-1}(\Delta G_K )f=\Delta^{-1}f\quad \forall f\in\mathbf{I}_K. 
    \]
    This gives the proof.
\end{proof}

\begin{proof}[Proof of Proposition \ref{HD to lim}]
    To clarify the proof, we will specify the type (free/wired) of the electrical network in parentheses. Despite that, by \ref{HD = R} and Theorem \ref{equis for F=W}, the free and wired currents coincide.  
    
    Let $\phi\in\mathbf{U}$ be the (wired) voltage function with boundary values 0 at $\infty$ and 1 at vertices in $K$. Let $H_K=\inf\{n\ge0\mid X_n\in K\}, \widetilde{H}_K=\inf\{n>0\mid X_n\in K\}$. By Definition \ref{wired infty current}, we have
    \[
        \phi(z)=\frac{\phi(z)}{1}=\mathbb{P}^{\mathrm{RW}}_z[H_K<\infty],\quad \phi|_K\equiv 1.
    \]
    By definition, $\alpha(\phi)=0$, so $\phi\in\mathbf{V}$. For any $y\in K$, 
    \begin{align*}
        \mathrm{e}_K(y)
        =\pi(y)\mathbb{P}^{\mathrm{RW}}_y[\widetilde{H}_K=\infty]
        =&\sum_{z\sim y,z\notin K}\mathfrak{c}(y,z)\big(1-\mathbb{P}^{\mathrm{RW}}_z[\widetilde{H}_K<\infty]\big)\\
        =&\sum_{z\sim y, z\notin K}\mathfrak{c}(y,z)\big(1-\mathbb{P}^{\mathrm{RW}}_z[H_K<\infty]\big)\\
        =&\sum_{z\sim y}\mathfrak{c}(y,z)\big(1-\phi(z)\big)
        =\sum_{z\sim y}\mathfrak{c}(y,z)\big(\phi(y)-\phi(z)\big)=\Delta\phi(y).
    \end{align*}
    Assume $K=\{k_1,k_2,...,k_m\}$. Since $\Delta\phi\in \mathbf{I}_K$, by Lemma \ref{G symm} (ii), $\forall t=1,2,...,m$,
    \begin{equation}\label{sum a = cap}
        1=\phi(k_t)=G_K\Delta\phi(k_t)
        =\sum_{j=1}^m G(k_t,k_j)\Delta\phi(k_j)=\sum_{j=1}^m G(k_t,k_j)\mathrm{e}_K(k_j). 
    \end{equation}
    
    On the other hand, by Proposition \ref{wired def}, let $\gamma$ be the voltage function of the (wired) unit current from $k_1$ to $K\setminus\{k_1\}$ with $\gamma|_{K\setminus\{k_1\}}\equiv 0$, then $\gamma(k_1)=\gamma(k_1)-0=\ $Energy$(\gamma)$. The (wired) effective resistance between $k_1$ and $K\setminus\{k_1\}$, $\mathcal{R}_w$, is defined as Energy$(\gamma)$, and the (wired) effective conductance, $\mathcal{C}_w$, is defined as its reciprocal. Then $\psi=\mathcal{C}_w\gamma\in\mathbf{U}$ is the (wired) voltage function with boundary values 0 at $K\setminus\{k_1\}$ and 1 at $k_1$. Hence, 
    \begin{align*}
        \mathrm{Energy}(\psi)=\mathcal{C}_w^2\mathrm{Energy}(\gamma)=\mathcal{C}_w^2\mathcal{R}_w=\mathcal{C}_w,
    \end{align*}
    Denote the (free) effective conductance between $k_1$ and $K\setminus\{k_1\}$ as $\mathcal{C}_f$, then
    \begin{align*}
        \mathcal{C}_w&=\mathcal{C}_f && (\text{Theorem \ref{equis for F=W}})\\
        &=\min\{\mathrm{Energy}(F)| F\in\mathbf{D}, F(k_1)=1, F|_{K\setminus\{k_1\}}\equiv 0\} && (\text{Exercise 9.42 in \cite{lyons-peres}}) \\
        &=\min\{\mathrm{Energy}(F)| F(k_1)=1, F|_{K\setminus\{k_1\}}\equiv 0\}
        =\mathrm{Energy}(h^{k_1}_K).
    \end{align*}
    This gives $\mathrm{Energy}(\psi)=\mathrm{Energy}(h^{k_1}_K)$. By uniqueness of energy-minimizing harmonic functions in Proposition 1.2 from \cite{gs-reflected-rw}, $h^{k_1}_K=\psi$. Since $\Delta(\psi-\alpha(\psi))=\Delta\psi\in\mathbf{I}_K$, by Lemma \ref{G symm} (ii), $\forall j=1,2,...,m$,
    \[
       \mathbf{1}_{j=1}-\alpha(\psi)
       =(\psi-\alpha(\psi))(k_j)
       =G_K\Delta\psi(k_j)
       =\sum_{t=1}^m G(k_j,k_t)\Delta\psi(k_t).
    \]
    Multiply each of the above $m$ equations by the corresponding coefficient $\mathrm{e}_K(k_j)$ and sum them up:
    \begin{align*}
       \mathrm{e}_K(k_1)-\mathrm{cap}(K)\alpha(\psi)
       =&\sum_{j=1}^m \sum_{t=1}^m G(k_j,k_t)\Delta\psi(k_t)\mathrm{e}_K(k_j)\\
       =&\sum_{t=1}^m\Delta \psi(k_t)\sum_{j=1}^m G(k_j,k_t)\mathrm{e}_K(k_j)\\
       =&\sum_{t=1}^m\Delta \psi(k_t)\sum_{j=1}^m G(k_t,k_j)\mathrm{e}_K(k_j)  &&(\text{Lemma \ref{G symm} (i)})\\
       \overset{\eqref{sum a = cap}}{=}&\sum_{t=1}^m\Delta \psi(k_t).
    \end{align*}
    
    Since $\Delta\psi$ corresponds to a current from $k_1$ to $K\setminus\{k_1\}$, by definition of net currents, $\sum_{t=1}^m\Delta \psi(k_t)=0$.
    Therefore,
    \[
        \lim_{n\to\infty} h^{k_1}_K(X_n)=\alpha(h^{k_1}_K)=\alpha(\psi)=\tilde{\mathrm{e}}_K(k_1).
    \]
    Similar arguments can be applied to $k_2,...,k_m$ as well.
\end{proof}

\subsection{Independence of excursions}
Here we prove independence between certain excursions for both models, which will be used in Lemma \ref{X^n=Z^n}. For $\{Z_t\}$ we prove that it inherits the strong Markov property in Lemma \ref{R SM}. For $\{X^\infty_t\}$ we prove that it has a restricted version of the strong Markov property under assumption \ref{HD = R}, therefore $\mathbb{P}^\infty_\infty$ can be defined.

Recall that $\{\mathcal{G}_n\}_{n\ge 1}$ is an increasing family of finite, connected subgraphs such that $\mathcal{G}=\cup_{n \geq 1}\mathcal{G}_n$.
\begin{lem}\label{Z SM}
    Recall that $\{Z_t\}$ given in Definition \ref{Z_t def} is the process version of RI. For $x\in\mathcal{VG}$, let $\mathcal{F}^\mathrm{RI}_t$ be the sigma-algebra of $\{Z_s\}_{s\le t}$, and $\tau$ be any stopping time for this filtration. 
    Let $\{X_k\}_{k\ge 0}$ be a random walk started from $x$ that is independent of $\{Z_t\}$, and $t(k)\sim $Exp$(\mathfrak{m}(X_k))$ be conditionally independent random variables conditioned on $\{X_k\}$. Define $\{Z_t\}$ started from $x$ by
    \[
        Z^x_t:= 
        \begin{cases}
            X_k,&\quad \text{ if } \sum_{i<k}t(i)\le t< \sum_{i\le k}t(i)\\
            Z_{t-\sum_{i}t(i)},&\quad \text{ if } t\ge \sum_{i}t(i).\\
        \end{cases} 
    \]
    Denote $\{Z_t\}$ started from $\infty$ by $\{Z_t\}$ itself. Then on the event $\{\tau<\infty\}$, the conditional law of $Z|_{[\tau,\infty)}$ given $Z|_{[0,\tau]}$ is the same as the law of $\{Z_t\}$ started from $Z_\tau$.
\end{lem}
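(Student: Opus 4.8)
The plan is to decompose the post-$\tau$ trajectory $Z|_{[\tau,\infty)}$ into the remainder of the excursion being traversed at time $\tau$ followed by all excursions concatenated after it, and to match each piece with the corresponding piece of $\{Z_t\}$ started from $Z_\tau$. Recall that consecutive holding intervals within an excursion abut, $T(m,n+1)=T(m,n)+t(m,n)$, so the intervals $[T(m,n),T(m,n)+t(m,n))$ cover $[0,\infty)$ except for the countable set of excursion-boundary times, at which $Z=\infty$ (Corollary \ref{hat tau infty a.s.}). Hence $Z_\tau=\infty$ precisely when $\tau$ is such a boundary, and otherwise $\tau\in[T(m,n),T(m,n)+t(m,n))$ for a unique $(m,n)$, with $Z_\tau=w_m(n)=:x$. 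I would treat the two cases in parallel.

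For the excursions concatenated strictly after the current one — those with mark $u_k$ larger than the mark $u(\tau)$ of the current (or, in the boundary case, of the next) excursion — I claim that, conditionally on the past, they form a fresh random-interlacement point process, and running Definition \ref{Z_t def} on it produces $\{Z_t\}$ started from $\infty$. This is exactly where Lemma \ref{R SM} is used: the strong Markov property of the point process $R$ says that, after a stopping level, the restriction of $\boldsymbol{\omega}$ to larger marks is (after translating the mark axis by $u(\tau)$) an independent copy of $\boldsymbol{\omega}$ with intensity $\nu\otimes\lambda$. In the case $Z_\tau=\infty$ this already identifies $Z|_{[\tau,\infty)}$ with $\{Z_t\}$ started from $\infty$, finishing that case.

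In the case $Z_\tau=x\in\mathcal{VG}$ it remains to handle the remainder of the current excursion and to verify the concatenation, for which two inputs are needed. First, memorylessness: since $t(m,n)=[\mathfrak{m}(x)]^{-1}\tilde{t}(m,n)$ with $\tilde{t}(m,n)\sim\mathrm{Exp}(1)$ conditionally independent, the residual holding time $T(m,n)+t(m,n)-\tau$ is $\mathrm{Exp}(\mathfrak{m}(x))$ and independent of the past. Second, the excursion structure: by the definition of $\mathbb{Q}_K$ (with $K=\{x\}$) the forward part of an excursion from its first visit to $x$ has the law $\mathbb{P}^{\mathrm{RW}}_x$, and applying the strong Markov property of the underlying walk extends this to the visit indexed by $n$, so that $(w_m(n+j))_{j\ge0}$ is distributed as $\mathbb{P}^{\mathrm{RW}}_x$ independently of the trajectory up to index $n$. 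Equipping this forward walk with the holding times $t(m,n+j)$, prepending the residual holding time at $x$, and noting its total $Z$-duration is a.s. finite by the argument for \ref{prop1} in Lemma \ref{m^**}, yields precisely the continuous-time walk from $x$ appearing in the definition of $Z^x$; concatenating it with the fresh interlacement from $\infty$ of the previous paragraph reproduces the explicit construction in Lemma \ref{Z SM}.

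The main obstacle is transferring the strong Markov property, proved in Lemma \ref{R SM} for the mark-filtration of $R$, to the given $Z$-stopping time $\tau$, across two mismatches: the filtrations live on different time scales ($u$-time versus $Z$-time), and $R$ alone does not record the holding times that $\mathcal{F}^{\mathrm{RI}}_t$ sees. I would resolve both by enriching $R$ to the marked point process $\widehat{R}$ obtained by attaching to each point $(w_m^*,u_m)$ its holding-time sequence $(\tilde{t}(m,n))_n$; the argument of Lemma \ref{R SM} applies verbatim to $\widehat{R}$ (its restriction to any $W^*_{\mathcal{VG}_n}$ is again compound Poisson), so $\widehat{R}$ is strong Markov for its mark-filtration $\widehat{\mathcal{F}}_u$. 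One then checks that $u(\tau)$ is an $\widehat{\mathcal{F}}_u$-stopping time — the event $\{u(\tau)\le u\}$ is determined by the excursions of mark at most $u$ together with their holding times — so the strong Markov property of $\widehat{R}$ delivers the independence and distributional identities used above. Since $\mathcal{F}^{\mathrm{RI}}_\tau\subset\widehat{\mathcal{F}}_\tau$ and strong Markov for the finer filtration implies it for the coarser, the statement for $\mathcal{F}^{\mathrm{RI}}$ follows.
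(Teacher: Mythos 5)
Your proposal follows the paper's proof essentially step for step: the same two-case split on $Z_\tau=\infty$ versus $Z_\tau=x\in\mathcal{VG}$, the same use of Lemma \ref{R SM} to realize the excursions with larger marks as a fresh interlacement (giving $\{Z_t\}$ started from $\infty$), and the same identification of the remainder of the current excursion --- forward walk law $\mathbb{P}^{\mathrm{RW}}_x$ for $(w_m(n+j))_{j\ge0}$ plus the memoryless residual holding time --- with the continuous-time walk in the definition of $Z^x$. Your final paragraph, enriching $R$ to the marked process $\widehat{R}$ carrying the holding-time sequences and checking that $u(\tau)$ is a stopping time for the mark-filtration, is in fact more careful than the paper, which applies Lemma \ref{R SM} at the level $\eta=\sup_{m\in M_1}u_m$ without verifying that $\eta$ is a stopping time for $R$'s own filtration (it is not, since it depends on the $\tilde{t}(m,n)$), so your patch tightens a point the paper leaves implicit rather than diverging from its argument.
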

\begin{proof}
    Denote the corresponding sigma-algebra of $\{Z_t\}_{0\le t\le \tau}$ as  $\mathcal{F}^\mathrm{RI}_\tau$. Recall that RI is a random measure-valued process, with sample points $\boldsymbol{\omega}=\sum_{m\ge 0}\delta_{(w^*_m,u_m)}$. Recall definition of $t(m,n)$ and $T(m,n)$ from Definition \ref{T(m,n) def} and define 
    \[
        T(m)=\lim_{n\to\infty} [T(m,n)+t(m,n)].
    \]
    
    By Definition \ref{Z_t def}, $Z_0=\infty$ a.s. Therefore, $\{Z_t\}$ ``starts from $\infty$''. 
    
    \textit{Case 1: Condition on $\{\tau<\infty, Z_\tau=\infty\}$}. Let $M_1=\{m\mid T(m)\le \tau\}$ and $M_2=\{m\mid T(m)> \tau\}$. Let $\eta=\sup_{m\in M_1} u_m\le \inf_{m\in M_2} u_m<\infty$ and $\hat{R}_t=R_{t+\eta}-R_\eta$. 
    By Lemma \ref{R SM}, since $\eta$ is finite, $\{\hat{R}_t\}_{t\ge 0}\overset{d}{=}\{R_t\}_{t\ge 0}$. 
    
    Let $\hat{Z}_t$ be the process version RI generated by $\hat{R}_t$, then $\{\hat{Z}_t\}_{t\ge0}\overset{d}{=}\{Z_t\}_{t\ge0}$. For all $m\in M_2$, 
    \[
        T(m,n)=\sum_{(k,j)\in I(m,n)}t(k,j)=\sup_{m\in M_1}T(m)+\sum_{(k,j)\in \hat{I}(m,n)}t(k,j)=\tau+\hat{T}(m,n).
    \]
    Note also that $\hat{Z}_0=\infty=Z_\tau$ a.s., so
    \[
        \{\mathbb{E}^\mathrm{RI}[Z_{\tau+t}|\mathcal{F}^\mathrm{RI}_\tau]\}_{t\ge0}
       \overset{d}{=}\{\hat{Z}_t\}_{t\ge0}
       \overset{d}{=}\{Z_t\}_{t\ge0}. 
    \]
    
    \textit{Case 2: Condition on $\{\tau<\infty, Z_\tau=x\in\mathcal{VG}\}$}. Let $\tau_0=\inf\{t>\tau\mid Z_t=\infty\}$ be a stopping time. By Case 1, 
    \[
        \{Z_t\}_{t\ge 0}\overset{d}{=}
        \{\mathbb{E}^\mathrm{RI}[Z_{\tau_0+t}|\mathcal{F}^\mathrm{RI}_{\tau_0}]\}_{t\ge 0}. 
    \]
    Assume $\tau\in[T(m,n),T(m,n)+t(m,n))$, by definition of RI, 
    $\{w_m(n+k)\}_{k\ge 0}$ has the law of random walk started from $x$. Note also that $Z_0=\infty$, therefore
    \[
        \{Z^x_t\}_{0\le t<\sum_i t(i)}\overset{d}{=}
        \{\mathbb{E}^\mathrm{RI}[Z_{\tau+t}|\mathcal{F}^\mathrm{RI}_{\tau}]\}_{0\ge t<\tau_0}, 
    \]
    \[
        Z^x_{\sum_it(i)}\overset{d}{=}Z_0=\infty,\quad \mathbb{E}^\mathrm{RI}[Z_{\tau_0}|\mathcal{F}^\mathrm{RI}_{\tau}]=\infty,
    \]
    \[
        \big\{\ \mathbb{E}^\mathrm{RI}\big[\ Z^x_{t+\sum_i t(i)}\mid \sigma(Z^x_t:t\le\sum_{i} t(i))\ \big]\ \big\}_{t\ge 0}
        \overset{d}{=}\{Z_t\}_{t\ge0}\overset{d}{=}
        \{\mathbb{E}^\mathrm{RI}[Z_{\tau_0+t}|\mathcal{F}^\mathrm{RI}_{\tau_0}]\}_{t\ge0}. 
    \]
    This means 
    \[
        \{\mathbb{E}^\mathrm{RI}[Z_{\tau+t}|\mathcal{F}^\mathrm{RI}_{\tau}]\}_{t>0}
        \overset{d}{=}\{Z^x_t\}_{t>0},
    \]
    which completes the proof. 
\end{proof}

The following lemma establishes a restricted strong Markov property for RW$^\infty$, which holds specifically for stopping times of the form $\tau_\infty$.
\begin{lem}\label{strong Markov}
    Assume \ref{HD = R} holds. Consider RW$^\infty$ started from $z\in\mathcal{VG}$ and let $\tau_\infty=\inf\{t:X^\infty_t=\infty\}$. Then the law of $\left\{X^\infty_s\right\}_{s\ge\tau_\infty}$ does not depend on $\left\{X^\infty_s\right\}_{s\le\tau_\infty}$ and the choice of $z$. We define this conditional law to be $\mathbb{P}^\infty_\infty$. Moreover, let $\{X_n\}$ be random walk on $(\mathcal{G},\mathfrak{c})$ with arbitrary starting point, and then
    \begin{equation}\label{P_inf charac}
        \mathbb{P}_\infty^\infty[X^\infty_{\tau_K}=y]=\lim_{m\to\infty} h^y_K(X_m)\quad a.s.
    \end{equation}
\end{lem}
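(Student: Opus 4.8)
The plan is to reduce everything to a single statement about hitting one finite set. Since Proposition \ref{HD to lim} already identifies the right-hand side of \eqref{P_inf charac} as the constant $\tilde{\mathrm{e}}_K(y)$, it suffices to construct $\mathbb{P}^\infty_\infty$ and show that, for every finite $K\subset\mathcal{VG}$ and $y\in K$, the first vertex at which $\{X^\infty_t\}$ meets $K$ after $\tau_\infty$ is distributed as $\tilde{\mathrm{e}}_K$, independently of the past and of $z$. Let $\mathcal{F}_t$ be the natural filtration of $\{X^\infty_t\}$, write $\sigma_K:=\inf\{t>\tau_\infty:X^\infty_t\in K\}$ and, for $K\subset\mathcal{VG}_n$, $\eta_n:=\inf\{t>\tau_\infty:X^\infty_t\in\mathcal{VG}_n\}$, which is finite a.s.\ by recurrence (Theorem \ref{RW^infty props}(v)), satisfies $\tau_\infty<\eta_n\le\sigma_K$, and has $X^\infty_{\eta_n}\in\mathcal{VG}$. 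Since $\eta_n$ is a stopping time with $X^\infty_{\eta_n}$ a vertex, Lemma \ref{VG SM} lets me restart the walk there, and Theorem \ref{RW^infty props}(vi) identifies the hitting law of $K$ from a vertex with $h^y_K$; so I first record, using the tower property over $\mathcal{F}_{\tau_\infty}\subseteq\mathcal{F}_{\eta_n}$,
\[
\mathbb{P}^\infty_z[X^\infty_{\sigma_K}=y\mid\mathcal{F}_{\eta_n}]=h^y_K(X^\infty_{\eta_n}),\qquad \mathbb{P}^\infty_z[X^\infty_{\sigma_K}=y\mid\mathcal{F}_{\tau_\infty}]=\mathbb{E}^\infty_z\big[h^y_K(X^\infty_{\eta_n})\mid\mathcal{F}_{\tau_\infty}\big].
\]

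The main device is that $\{h^y_K(X^\infty_{\eta_n})\}_n$ is a backward (reverse) martingale. Indeed $\eta_{n+1}\le\eta_n$, so $\{\mathcal{F}_{\eta_n}\}_n$ is a decreasing filtration; and because $h^y_K$ is discrete harmonic off $K$ and $K\subset\mathcal{VG}_n$, restarting at $\eta_{n+1}$ via Lemma \ref{VG SM} and conditioning on the first hit of $\mathcal{VG}_n$ (which occurs at $\eta_n$) gives, by Theorem \ref{RW^infty props}(vi), the relation $\mathbb{E}^\infty_z[h^y_K(X^\infty_{\eta_n})\mid\mathcal{F}_{\eta_{n+1}}]=\mathbb{E}^\infty_{x'}[h^y_K(X^\infty_{\tau_{\mathcal{VG}_n}})]=h^y_K(x')$ with $x'=X^\infty_{\eta_{n+1}}$. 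By the backward martingale convergence theorem, $h^y_K(X^\infty_{\eta_n})$ converges a.s.\ and in $L^1$ to a limit $L_y$, and the $L^1$ convergence lets me pass to the limit above to get $\mathbb{P}^\infty_z[X^\infty_{\sigma_K}=y\mid\mathcal{F}_{\tau_\infty}]=\mathbb{E}^\infty_z[L_y\mid\mathcal{F}_{\tau_\infty}]$.

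It remains to identify $L_y=\tilde{\mathrm{e}}_K(y)$ a.s., and this is the crux. I plan to argue that $X^\infty_{\eta_n}\to\infty$ (leaving every finite set, since for large $n$ the walk first meets the enlarging shell $\mathcal{VG}_n$ during an excursion that has barely descended from $\infty$, and $\eta_n\downarrow\tau_\infty$), and then to reverse time along the excursion by which the walk descends from $\infty$ into $\mathcal{VG}_n$. Reversibility of the jump chain of $\{X^\infty_t\}$—which before $\tau_\infty$ is exactly the random walk on $(\mathcal{G},\mathfrak{c})$ by Theorem \ref{RW^infty props}(iii)—turns this entry path into a transient random-walk trajectory escaping to infinity, along which the points $X^\infty_{\eta_n}$ appear in order. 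Since $h^y_K\in\mathbf{D}(\mathcal{G})$, Proposition \ref{HD to lim} (equivalently Lemma \ref{lim=cons} applied to $h^y_K$) forces $h^y_K\to\tilde{\mathrm{e}}_K(y)$ along that trajectory, whence $L_y=\tilde{\mathrm{e}}_K(y)$. I expect the real obstacle to be precisely this transfer: the limit of Proposition \ref{HD to lim} is an a.s.\ statement for the \emph{unconditioned} walk, so I must verify that the conditioning implicit in the time-reversal (an $h$-transform realizing the descent-from-infinity excursion) is absolutely continuous with respect to the random walk on the tail field on which the constant limit $\tilde{\mathrm{e}}_K(y)$ is measurable.

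With $\mathbb{P}^\infty_z[X^\infty_{\sigma_K}=y\mid\mathcal{F}_{\tau_\infty}]=\tilde{\mathrm{e}}_K(y)$ established, the first-hit vertex of $K$ after $\tau_\infty$ is independent of $\mathcal{F}_{\tau_\infty}$ and of $z$. Applying Lemma \ref{VG SM} at the stopping time $\sigma_K$ then shows that the whole of $\{X^\infty_s\}_{s\ge\sigma_K}$ has a law determined solely by $\tilde{\mathrm{e}}_K$, hence independent of the past and of $z$. Letting $K=\mathcal{VG}_m\uparrow\mathcal{VG}$ makes the discarded initial segment $[\tau_\infty,\sigma_{\mathcal{VG}_m}]$ shrink (since $\sigma_{\mathcal{VG}_m}\downarrow\tau_\infty$), so by right continuity (Theorem \ref{RW^infty props}(ii)) the law of all of $\{X^\infty_s\}_{s\ge\tau_\infty}$ is independent of $\{X^\infty_s\}_{s\le\tau_\infty}$ and of $z$; this common law defines $\mathbb{P}^\infty_\infty$. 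Finally, reading the identity under $\mathbb{P}^\infty_\infty$ gives $\mathbb{P}^\infty_\infty[X^\infty_{\tau_K}=y]=\tilde{\mathrm{e}}_K(y)$, which by Proposition \ref{HD to lim} equals $\lim_{m\to\infty}h^y_K(X_m)$ a.s., establishing \eqref{P_inf charac}.
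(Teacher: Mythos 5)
Your scaffolding is mostly sound: the reduction to the law of $X^\infty_{\sigma_K}$, the reverse-martingale property of $h^y_K(X^\infty_{\eta_n})$ along the decreasing stopping times $\eta_n$ (the consistency identity $\mathbb{E}^\infty_{x'}[h^y_K(X^\infty_{\tau_{\mathcal{VG}_n}})]=h^y_K(x')$ does hold, by Theorem \ref{RW^infty props}(vi) together with uniqueness in Proposition \ref{h^y def}), and the final reassembly via Lemma \ref{VG SM} at $\sigma_K$ with $K=\mathcal{VG}_m\uparrow\mathcal{VG}$. But the step you yourself flag as the obstacle is a genuine gap, and it is the entire content of the lemma. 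To identify the reverse-martingale limit $L_y$ with $\tilde{\mathrm{e}}_K(y)$ you propose to time-reverse the descent of $X^\infty$ from $\infty$ into an unconditioned transient random-walk trajectory and to verify absolute continuity of the implicit $h$-transform on the tail field. The post-$\tau_\infty$ entry path, however, has no a priori description: RW$^\infty$ is in general not Markov at $\infty$ (Proposition 5.3 in \cite{gs-reflected-rw}), and on graphs failing \ref{HD = R} the analogous identification is simply false, so no reversal argument can succeed without invoking \ref{HD = R} in an essential way---which your sketch defers to exactly the entrance-law statement being proved. As written, the argument is circular at its crux; nothing in Theorem \ref{RW^infty props} or Lemma \ref{VG SM} tells you that the trajectory by which the walk comes down from $\infty$ is a reversed random walk, and Proposition \ref{HD to lim} (an a.s.\ statement for the honest walk $\{X_m\}$) cannot be transferred to the points $X^\infty_{\eta_n}$ without it.

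The paper's proof avoids the post-$\tau_\infty$ excursion entirely by running the martingale \emph{forward} along the pre-$\tau_\infty$ discrete chain $\hat X_0,\hat X_1,\dots$, which by Theorem \ref{RW^infty props}(iii) is exactly the transient random walk on $(\mathcal{G},\mathfrak{c})$, so Proposition \ref{HD to lim} applies verbatim with no reversal or change of measure. Setting $\phi(x)=\mathbb{P}^\infty_x[X^\infty_{T_K}=y]$ with $T_K=\sigma_K$, Lemma \ref{VG SM} gives $\mathbb{E}^\infty_z[\mathbf{1}_{X^\infty_{T_K}=y}\mid\{\hat X_m\}_{m\le M}]=\phi(\hat X_M)$; this bounded martingale is uniformly integrable, so the conditional probability given the whole pre-$\tau_\infty$ history equals $\lim_{M\to\infty}\phi(\hat X_M)$, and transience (the chain a.s.\ eventually never returns to $K$ before $\tau_\infty$) allows replacing $\phi(\hat X_M)$ by $h^y_K(\hat X_M)$ in the limit, which is the constant $\tilde{\mathrm{e}}_K(y)$ by Proposition \ref{HD to lim}. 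If you want to salvage your outline, compute the limit from the past of $\tau_\infty$, not its future: your tower-property display already reduces everything to $\mathbb{P}^\infty_z[X^\infty_{\sigma_K}=y\mid\mathcal{F}_{\tau_\infty}]$, and this is where the forward argument should be inserted. (One further small point: your assertion $\sigma_{\mathcal{VG}_m}\downarrow\tau_\infty$ needs justification, e.g.\ via Theorem \ref{RW^infty props}(i) and Fubini; the paper sidesteps it by noting only that $X^\infty\equiv\infty$ on $[\tau_\infty,\lim_m\sigma_{\mathcal{VG}_m}]$, which suffices.)
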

\begin{proof}
     By Proposition \ref{HD to lim}, for arbitrary starting point of $\{X_m\}$, 
    \[
        \lim_{m\to\infty} h^y_K(X_m)=\tilde{\mathrm{e}}_K(y).
    \]

    Let $T_K =\inf\left\{t>\tau_\infty : X^\infty_t\in K\right\}$. Since $\tau_\infty$ is a stopping time, $T_K$ is also a stopping time. Define the vertices (can be repeated) that $X^\infty|_{[0,\tau_\infty]}$ hit in time order as $\hat{X}_0, \hat{X}_1,...$ By independence in Theorem \ref{RW^infty props} (iii), we have 
    \begin{align*}
        \mathbb{P}^\infty_z[X^\infty_{T_K}=y\mid \{X^\infty_t\}_{t<\tau_\infty}]
        =\mathbb{P}^\infty_z[X^\infty_{T_K}=y\mid \{\hat{X}_m\}_{m\ge 0}].
    \end{align*}     
    Let $\phi(x)=\mathbb{P}^\infty_{x}[X^\infty_{T_K}=y]$. Since it's harmonic, $\{\phi(\hat{X}_M)\}_{ M\in\mathbb{N}_0}$ is a martingale. By Lemma \ref{VG SM}, 
    \[
        \mathbb{E}^\infty_z[\mathbf{1}_{X^\infty_{T_K}=y}\mid \{\hat{X}_m\}_{0\le m\le M}]
        =\mathbb{P}^\infty_z[X^\infty_{T_K}=y\mid \{\hat{X}_m\}_{0\le m\le M}]
        =\phi(\hat{X}_M). 
    \]
    Since $\mathbf{1}_{X^\infty_{T_K}=y}\in L^1$, we know that $\{\phi(\hat{X}_M)\}_{ M\in\mathbb{N}_0}$ is a uniformly integrable martingale, so 
    \begin{align*}
        \mathbb{P}^\infty_z[X^\infty_{T_K}=y\mid \{X^\infty_t\}_{t<\tau_\infty}]
        =\mathbb{P}^\infty_z[X^\infty_{T_K}=y\mid \{\hat{X}_m\}_{m\ge 0}]
        =\lim_{M\to\infty} \phi(\hat{X}_m)\quad a.s.
    \end{align*}
    Since $\{X_m\}_{m\ge 0}$ will eventually leave $K$ with probability 1, 
    \begin{align*}
        \mathbb{P}^\infty_z[X^\infty_{T_K}=y\mid \{X^\infty_t\}_{t<\tau_\infty}]
        =&\lim_{M\to\infty} \mathbb{P}^\infty_{\hat{X}_M}[X^\infty_{T_K}=y]\\
        =&\lim_{M\to\infty} \mathbb{P}^\infty_{\hat{X}_M}[X^\infty_{\tau_K}=y]
        =\lim_{M\to\infty} h^y_K(\hat{X}_M) = \tilde{\mathrm{e}}_K(y),
    \end{align*}
    which is a constant that is independent of $\left\{X^\infty_s\right\}_{s\le\tau_\infty}$. This gives \eqref{P_inf charac}.
    
    By Lemma \ref{VG SM}, $\left\{X^\infty_s\right\}_{s\ge T_K}$ and $\left\{X^\infty_s\right\}_{s\le\tau_\infty}$ are independent. In particular, we can take $K=\mathcal{VG}_n$ for any $n$. Since $T_{\mathcal{VG}_n}$ decreases with $n$, by the definition of $\{X^\infty_t\}$, $\forall t\in [\tau_\infty, \lim_{n\to \infty}T_{\mathcal{VG}_n}]$, $X^\infty_{t}=\infty$. This gives the desired result.
\end{proof}

\subsection{Proof of \ref{HD = R}\texorpdfstring{$\Rightarrow$}{⇒}\ref{EQ}}
This subsection is dedicated to prove \ref{HD = R}$\Rightarrow$\ref{EQ}. In Lemma \ref{X^n=Z^n} we prove the truncated versions of $\{X^\infty_t\}_{t\ge\tau_\infty}$ and $\{Z_t\}_{t\ge 0}$ are equivalent. In Lemma \ref{d(Z,Z^n)} we show for both $\{X^\infty_t\}_{t\ge\tau_\infty}$ and $\{Z_t\}_{t\ge 0}$, the sequence of truncated processes converges to the original process. Combining these results yields \ref{EQ}.

Recall $\{\mathcal{G}_n\}_{n\ge 1}$ is an increasing family of finite, connected subgraphs such that $\mathcal{G}=\cup_{n \geq 1}\mathcal{G}_n$. For either $\{X^\infty_t\}_{t\ge\tau_\infty}$ or $\{Z_t\}_{t\ge 0}$, we restrict our attention to excursions that intersect a finite set $\mathcal{VG}_n$. For any such excursion, we start observing its behavior once it hits $\mathcal{VG}_n$, continue observing until it hits $\infty$, we then stop observing until the arrival of another such excursion. By concatenating what we have observed, we get $X^n$ or $Z^n$.

\begin{lem}\label{X^n=Z^n}
    Fix $n\ge 1$. 
    For any $m\ge 1$, define
    \[
        \sigma^n_0=\tau_\infty,\ 
        \tau_m^n=\inf\left\{t>\sigma_{m-1}^n\mid X^\infty_t\in\mathcal{VG}_n\right\},\
        \sigma_m^n=\inf\left\{t>\tau_{m}^n\mid X^\infty_t=\infty\right\}.
    \]
    \[
        \hat{\sigma}^n_0:=0,\ 
        \hat{\tau}_m^n=\inf\left\{t>\hat{\sigma}_{m-1}^n\mid Z_t\in\mathcal{VG}_n\right\},\
        \hat{\sigma}_m^n=\inf\left\{t>\hat{\tau}_{m}^n\mid Z_t=\infty\right\}.
    \]
    
    For any $t\in [0,\ \infty)$, let $k(t)=\max\left\{k\ge 0\mid \sum_{m=1}^k (\sigma_m^n-\tau_m^n)<t\right\}$. Let
    \[
        X^n_t:=X^\infty_{\tau_{k(t)+1}^n+t-\sum_{m=1}^{k(t)} (\sigma_m^n-\tau_m^n)}.
    \]
    
    Similarly we define $Z^n_t$. Assume that \ref{HD = R} and \eqref{hy&cap Xn} hold, then: 
    \begin{align*}
        \left\{X^n_t\right\}_{t\ge 0} \text{ under } \mathbb{P}^\infty_x \text{ and } \left\{Z^n_t\right\}_{t\ge 0} \text{ under } \mathbb{P}^\mathrm{RI} \text{ are identically distributed}.
    \end{align*}
\end{lem}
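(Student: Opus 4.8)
The plan is to realize both $\{X^n_t\}_{t\ge0}$ and $\{Z^n_t\}_{t\ge0}$ as concatenations of independent and identically distributed excursions, and then to verify that the law of a single excursion agrees for the two models. By an \emph{excursion} I mean the pair consisting of the entry vertex into $\mathcal{VG}_n$ and the trajectory that follows it, run until the process returns to $\infty$. Once both processes are presented as i.i.d.\ concatenations governed by a common single-excursion law, the asserted equality in distribution is immediate. Note first that, by Remark \ref{P^inf_inf rmk} and Lemma \ref{strong Markov}, the restriction $X^\infty|_{[\tau_\infty,\infty)}$ has law $\mathbb{P}^\infty_\infty$ irrespective of the starting vertex $x$, so it suffices to treat $\{X^n_t\}$ under $\mathbb{P}^\infty_\infty$.

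I would first pin down the single-excursion law. \textbf{Entry vertex.} For RW$^\infty$, equation \eqref{P_inf charac} of Lemma \ref{strong Markov} together with the hypothesis \eqref{hy&cap Xn} gives $\mathbb{P}^\infty_\infty[X^\infty_{\tau^n_1}=y]=\lim_m h^y_{\mathcal{VG}_n}(X_m)=\tilde{\mathrm{e}}_{\mathcal{VG}_n}(y)$, so the entry vertex is distributed as $\tilde{\mathrm{e}}_{\mathcal{VG}_n}$. For RI I read off the same law from $\mathbb{Q}_{\mathcal{VG}_n}$: since its backward coordinate is conditioned on $\{\widetilde{H}_{\mathcal{VG}_n}=\infty\}$ while its forward coordinate is an unconditioned walk, the coordinate $X_0=y$ is exactly the first entrance into $\mathcal{VG}_n$ and carries weight $\mathrm{e}_{\mathcal{VG}_n}(y)$; normalizing by $\mathrm{cap}(\mathcal{VG}_n)$, the entry vertex of the smallest-$u$ excursion meeting $\mathcal{VG}_n$ is again distributed as $\tilde{\mathrm{e}}_{\mathcal{VG}_n}$. \textbf{Trajectory after entry.} Conditioned on the entry vertex $y$, Lemma \ref{VG SM} identifies the RW$^\infty$ continuation with RW$^\infty$ started at $y$, and by Theorem \ref{RW^infty props}\,(iii)--(iv) its portion up to $\sigma^n_1=\tau_\infty$ is precisely a continuous-time random walk from $y$ with holding rates $\mathfrak{m}$, absorbed at $\infty$. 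On the RI side the forward coordinate is $\mathbb{P}^{\mathrm{RW}}_y$, and the holding times $t(m,j)=[\mathfrak{m}(w_m(j))]^{-1}\tilde t(m,j)$ are $\mathrm{Exp}(\mathfrak{m}(w_m(j)))$, so the recorded trajectory is again a continuous-time random walk from $y$ with rates $\mathfrak{m}$, absorbed at $\infty$. Both single-excursion laws thus share the same transient discrete skeleton from $y$ and the same $\mathrm{Exp}(\mathfrak{m})$ holding times, hence coincide.

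Next I would establish the i.i.d.\ structure of the excursions. For RW$^\infty$ I iterate the two available Markov properties: Lemma \ref{VG SM} at the entry time $\tau^n_1$ turns the continuation into RW$^\infty$ from the entry vertex, and Lemma \ref{strong Markov} applied to that restarted walk shows the post-$\sigma^n_1$ process is again $\mathbb{P}^\infty_\infty$, independently of the first entry vertex, the first excursion, and the starting configuration (the independence of the starting vertex is exactly the content of Lemma \ref{strong Markov}). Iterating gives i.i.d.\ excursions, while Corollary \ref{tau_infty a.s.} ensures $\infty$ is revisited arbitrarily late, so all $\tau^n_m,\sigma^n_m$ are finite and the concatenation never terminates. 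For RI the analogous conclusion comes from Lemma \ref{Z SM}: each $\hat\sigma^n_m$ is a stopping time at which $Z=\infty$, so the strong Markov property restarts $Z$ from $\infty$ independently of the past, and Corollary \ref{hat tau infty a.s.} guarantees infinitely many excursions reach $\mathcal{VG}_n$; equivalently, the excursions hitting $\mathcal{VG}_n$ form a Poisson thinning of $\boldsymbol\omega$ and so are i.i.d.\ when ordered by $u$.

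Combining the two steps, both $\{X^n_t\}$ and $\{Z^n_t\}$ are concatenations of i.i.d.\ excursions sampled from the same single-excursion law, which yields $\{X^n_t\}_{t\ge0}\overset{d}{=}\{Z^n_t\}_{t\ge0}$. I expect the main obstacle to be on the RI side: correctly extracting the first-entrance decomposition from $\mathbb{Q}_{\mathcal{VG}_n}$ (verifying that the conditioned-backward / free-forward structure really places the first entrance at the coordinate $X_0$, so that the entry law is $\tilde{\mathrm{e}}_{\mathcal{VG}_n}$ and the forward law is $\mathbb{P}^{\mathrm{RW}}_y$), and then checking that assigning holding times and concatenating in the interlacement ordering preserves independence of the recorded pieces. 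This last point is precisely where the strong Markov property of $Z$ (Lemma \ref{Z SM}) is indispensable, beyond the bare independence of the underlying Poisson points.
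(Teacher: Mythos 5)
Your proposal is correct and follows essentially the same route as the paper's proof: both decompose $X^n$ and $Z^n$ into i.i.d.\ excursions (independence via Lemma \ref{strong Markov} together with Lemma \ref{VG SM} on the RW$^\infty$ side and Lemma \ref{Z SM} on the RI side, finiteness of the stopping times via Theorem \ref{RW^infty props}(v) and Corollaries \ref{tau_infty a.s.} and \ref{hat tau infty a.s.}), identify the common single-excursion law as an entry vertex distributed as $\tilde{\mathrm{e}}_{\mathcal{VG}_n}$ (from \eqref{P_inf charac} and \eqref{hy&cap Xn}) followed by a random walk from $y$ with $\mathrm{Exp}(\mathfrak{m})$ holding times sampled independently of the skeleton, and then conclude. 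Your explicit first-entrance reading of $\mathbb{Q}_{\mathcal{VG}_n}$ merely spells out a step the paper leaves implicit when it asserts the RI excursion law $\tilde{\mathrm{e}}_{\mathcal{VG}_n}(y)\mathbb{P}^{\mathrm{RW}}_y[B]$.
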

\begin{proof}
    On the one hand, thanks to Theorem \ref{RW^infty props} (v) and Corollary \ref{tau_infty a.s.}, $\tau^n_m<\infty, \sigma^n_m<\infty$ a.s. Since \ref{HD = R} holds, by Lemma \ref{strong Markov} and Lemma \ref{VG SM}, $\left\{X^\infty|_{[\tau^n_m,\sigma^n_m)}\right\}_{m\ge 1}$ are mutually independent. By \eqref{P_inf charac} and \eqref{hy&cap Xn},
    \[
        \mathbb{P}^\infty_z[X^\infty_{T_K}=y \mid \{X^\infty_s\}_{s\le\tau_\infty}]=\tilde{\mathrm{e}}_K(y). 
    \]
    Hence, for any $B\in \mathcal{W}_+$ such that all elements in $B$ starts from $y\in \mathcal{VG}_n$, 
    \begin{align*}
        &\mathbb{P}^\infty_\infty[ \text{the order of vertices that }X^\infty|_{[\tau^n_m,\sigma^n_m)} \text{ hits in $\mathcal{VG}$ is exactly the same as that of an element in $B$} ]\\
        =&\tilde{\mathrm{e}}_{\mathcal{VG}_n}(y)\mathbb{P}^{\mathrm{RW}}_y[B].
    \end{align*}

    On the other hand, thanks to Corollary \ref{hat tau infty a.s.}, $\hat{\tau}^n_m<\infty, \hat{\sigma}^n_m<\infty$ a.s. By Lemma \ref{Z SM}, $\left\{Z|_{[\hat{\tau}^n_m,\hat{\sigma}^n_m)}\right\}_{m\ge 1}$ is i.i.d. with distribution
    \begin{align*}
        &\mathbb{P}^\mathrm{RI}[ \text{the order of vertices that } Z|_{[\hat{\tau}^n_m,\hat{\sigma}^n_m)}\text{ hits in $\mathcal{VG}$ is exactly the same as that of an element in $B$} ]\\
        =&\tilde{\mathrm{e}}_{\mathcal{VG}_n}(y)\mathbb{P}^{\mathrm{RW}}_y[B].
    \end{align*}
    Since exponential waiting times are sampled independently of the hitting sequence and governed by the public parameter $\mathfrak{m}$, the lemma is thereby established.
\end{proof}

Now we define a metric between continuous-time processes and prove that truncated versions are close to their original process in the sense of this metric. 
\begin{defn}\label{metric}
    Let $M$ be the set of all Lebesgue measurable functions $f:\ \left(0,\ \infty\right)\rightarrow\mathcal{VG}\cup\left\{\infty\right\}$ modulo re-definition on a set of zero Lebesgue measure. Then we can define a metric on $M$ as
    \[
        d(f,g)=\int_0^{\infty} e^{-t}\mathbf{1}_{f(t)\ne g(t)} dt.
    \]
\end{defn}

\begin{lem}\label{d(Z,Z^n)}
    $\lim_{n\rightarrow\infty}\mathbb{E}^\infty_\infty[d(X^\infty,X^n)]=0,\ \lim_{n\rightarrow\infty}\mathbb{E}^\mathrm{RI}[d(Z,Z^n)]=0$.
\end{lem}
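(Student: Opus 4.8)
The plan is to prove the stronger almost-sure statements $d(Z,Z^n)\to 0$ and $d(X^\infty,X^n)\to 0$, from which the two displayed limits follow by dominated convergence: since $d(f,g)\le\int_0^\infty e^{-t}\,dt=1$, bounded a.s.\ convergence transfers to convergence of expectations. Both statements are handled by the same mechanism, so I describe it for $Z$ and indicate the changes for $X^\infty$ at the end. Writing $d(Z,Z^n)=\int_0^\infty e^{-t}\mathbf{1}_{Z_t\ne Z^n_t}\,dt$, it suffices, again by dominated convergence but now in $t$ against the integrable weight $e^{-t}$, to show that for almost every fixed $t$ one has $Z^n_t=Z_t$ for all large $n$, almost surely.

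The key is to realise $Z^n$ as a time change of $Z$. By construction $Z^n$ is obtained from $Z$ by deleting two families of vertex-visits — those lying in excursions $w_m$ with $w_m^*\notin W^*_{\mathcal{VG}_n}$, and, inside each surviving excursion, the visits $w_m(j)$ with $j<a^n_m:=\inf\{i\in\mathbb{Z}\mid w_m(i)\in\mathcal{VG}_n\}$ — and then reconcatenating the surviving visits in the interlacement order. Let $D^n\subset[0,\infty)$ be the set of interlacement-times occupied by deleted visits and set $C^n(s)=s-|D^n\cap[0,s)|$; then $Z^n_t=Z_{\Phi^n(t)}$ with $\Phi^n(t)=\inf\{s\mid C^n(s)>t\}$. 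I will show that for every fixed $s$, $|D^n\cap[0,s)|\to 0$ almost surely. Granting this, $C^n(t+\varepsilon)=(t+\varepsilon)-|D^n\cap[0,t+\varepsilon)|>t$ for large $n$, whence $t\le\Phi^n(t)\le t+\varepsilon$ (the lower bound because $C^n(s)\le s$); letting $\varepsilon\downarrow 0$ gives $\Phi^n(t)\to t$. For almost every $t$ the point $t$ lies in the interior of a single visit interval $[T(m,j),T(m,j)+t(m,j))$, so $\Phi^n(t)\to t$ forces $Z^n_t=Z_{\Phi^n(t)}=Z_t=w_m(j)$ for all large $n$, which is exactly what is needed.

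It remains to prove $|D^n\cap[0,s)|\to 0$, the technical heart. Because the visits tile $[0,\infty)$ up to the measure-zero set $\{t\mid Z_t=\infty\}$, it is enough to show that each fixed visit $(m,j)$ is deleted for only finitely many $n$ and then invoke dominated convergence (the indicator that the visit containing $r$ is deleted tends to $0$ for a.e.\ $r\in[0,s)$ and is bounded by $1$). For the first deletion type, every excursion meets some finite set, so $w_m^*\in W^*_{\mathcal{VG}_n}$ once $n$ is large. For the second, since $w_m$ emanates from infinity we have $a^n_m\downarrow-\infty$ as $\mathcal{VG}_n\uparrow\mathcal{VG}$, so for fixed $j$ eventually $a^n_m\le j$ and the visit survives; equivalently, the deleted prefix carries total holding time $\sum_{i<a^n_m}t(m,i)$, the left tail of the convergent sum $\sum_{i\in\mathbb{Z}}t(m,i)$ (finite by Lemma~\ref{m^**}), and hence vanishes. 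This confirms the claim for $Z$.

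For $X^\infty$ under $\mathbb{P}^\infty_\infty$ the argument is identical in spirit, with excursions now the connected components of $\{t\ge\tau_\infty\mid X^\infty_t\ne\infty\}$. Property (i) of Theorem~\ref{RW^infty props} ensures these tile time up to a null set, Lemma~\ref{2.3} guarantees each excursion makes only finitely many jumps on compact subintervals, and Corollary~\ref{tau_infty a.s.} ensures infinitely many excursions. The type-(a) deletions vanish because each excursion visits a vertex, eventually in $\mathcal{VG}_n$, and the type-(b) deletions vanish because the first hitting time of $\mathcal{VG}_n$ within an excursion decreases to the left endpoint of that excursion interval (again using property (i), which places vertex-times arbitrarily close to the endpoint), so the deleted initial duration tends to $0$. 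The same time-change bookkeeping then yields $d(X^\infty,X^n)\to 0$ a.s., completing the proof. I expect the main obstacle to be making the estimate $|D^n\cap[0,s)|\to0$ rigorous — in particular controlling the infinitely many pre-first-hit visits inside each excursion — which is resolved by the convergent-tail observation together with the fact that first entrances into $\mathcal{VG}_n$ recede to the start of each excursion.
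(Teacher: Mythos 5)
Your proposal is correct, and its overall skeleton is the same as the paper's: view the truncated process as a time change of the original one, show the skipped (deleted) time before any fixed horizon tends to zero, use the fact that typical times lie in the interior of a single vertex-visit interval to upgrade $\Phi^n(t)\to t$ to eventual pointwise equality, and integrate via Fubini and dominated convergence. The one place you genuinely diverge is the local-constancy step for $Z$: the paper proves the \emph{fixed-time} statement that for every deterministic $t>0$, almost surely $t$ lies inside some visit interval $(T(m,n),T(m,n)+t(m,n))$, and this requires its uniformization trick — introducing an independent $U\sim\mathrm{Uniform}(0,t)$ and exploiting mutual absolute continuity of $\{T(m,n)\}$ and $\{T(m,n)+U\}$ to transfer the tiling identity $T(k,j)=\sum_{(m,n)\in I(k,j)}t(m,n)$ into a statement at the fixed time $t$. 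You stop at the tiling identity itself (a.s.\ the non-visit times are Lebesgue-null) and apply Fubini to get the statement for \emph{almost every} fixed $t$, which suffices because the quantity being estimated is an integral in $t$ against $e^{-t}$. This is a more elementary route that bypasses the paper's most delicate computation, at the cost of yielding a slightly weaker conclusion (the paper's version gives $\mathbb{P}[Z_t=\infty]=0$ for every fixed $t$, not just almost every $t$). Conversely, your write-up is more careful than the paper's on the other half of the argument: the paper simply asserts that the remaining time $\Delta^n_t$ tends to $0$, whereas you justify the analogous claim $|D^n\cap[0,s)|\to 0$ by showing each fixed visit is deleted for only finitely many $n$ (type-(a) deletions die because every excursion meets $\mathcal{VG}_n$ eventually; type-(b) because $a^n_m\downarrow-\infty$ and the deleted prefix is the left tail of a convergent sum, finite by Lemma \ref{m^**} and Corollary \ref{hat tau infty a.s.}), together with monotonicity of $D^n$. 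Your treatment of the $X^\infty$ side, using Theorem \ref{RW^infty props}(i), Lemma \ref{2.3}, and Corollary \ref{tau_infty a.s.} to control excursion components, matches the paper's intent while again filling in the vanishing-deleted-time step that the paper leaves implicit.
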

\begin{proof}
    Fix t. Since $\{X^\infty_t\}$ started from $\infty$ can be viewed as $\{X^\infty_t\}$ started from $\mathcal{VG}$ on $(\tau_\infty,\infty)$, we can still apply Theorem \ref{RW^infty props} $(i)$, so $\mathbb{P}^\infty_\infty-$a.s. we can pick $\epsilon>0$ such that $X^\infty_s=X^\infty_t$ for all $s\in(t-\epsilon, t+\epsilon)$. Additionally, we define remaining time $\Delta^n_t$ as the Lebesgue measure of $[0,t]\setminus \cup_{m=1}^\infty[\tau^n_m,\sigma^n_m)$, so $\lim_{n\rightarrow\infty} \Delta^n_t=0$. Therefore,
    \[
        \mathbb{P}^\infty_\infty[X^\infty_t\ne X^n_t]
        \le \mathbb{P}^\infty_\infty[\Delta^n_t\ge \epsilon]\rightarrow 0  \quad(n\rightarrow\infty).
    \]
    By Fubini's theorem and Dominated Convergence theorem,
    \[
        \mathbb{E}^\infty_\infty[d(X^\infty,X^n)]
        =\int_0^{\infty} e^{-t}\mathbb{P}^\infty_\infty[X^\infty_t\ne X^n_t] dt\rightarrow 0 \quad(n\rightarrow\infty).
    \]
    
    Similar proof can be applied to $\{Z_t\}$. It remains to check that for any $t>0$, a.s. we can pick $\epsilon>0$ such that $Z_s=Z_t$ for all $s\in(t-\epsilon, t+\epsilon)$. This is just an analogy to Lemma 3.7 in \cite{gs-reflected-rw}.
    
    Note that for any $(k,j)$, 
    \[
        T(k,j)=\sum_{(m,n)\in I(k,j)} t(m,n)=\sum_{(m,n)\in I(k,j)} \text{Leb}((T(m,n),T(m,n)+t(m,n))).
    \]
    here $I(k,j)$ is as defined in \eqref{I(k,j)}. Since all these open intervals are disjoint, we know that
    \[
        \text{Leb}([0,t]\setminus \cup_{m\ge 1,n\in\mathbb{Z}}(t-T(m,n)-t(m,n),t-T(m,n)))=0\quad a.s.
    \]  
    Let $U\sim \text{Uniform}(0,t)$ be independent from everything else. Then for any $(k,j)$, $\left\{t(m,n)\right\}_{m\le 1, n\in\mathbb{Z}}$ and $\left\{t(m,n)+U\mathbf{1}_{m=j,n=k}\right\}_{m\le 1, n\in\mathbb{Z}}$ are mutually absolutely continuous. Therefore, $\left\{T(m,n)\right\}_{(m,n)\in I(k,j)}$ and $\left\{T(m,n)+U\right\}_{(m,n)\in I(k,j)}$ are mutually absolutely continuous.
    So we have
    \begin{align*}
        &\mathbb{P}[t\in [0,\infty)\setminus \cup_{m\le 1, n\in\mathbb{Z}}(T(m,n)+U,T(m,n)+t(m,n)+U)]\\
        \;\le &\inf_{(k,j)}\mathbb{P}[t\in [0,\infty)\setminus \cup_{(m,n)\in I(k,j)}(T(m,n)+U,T(m,n)+t(m,n)+U)]\\
        \;=&\inf_{(k,j)}\mathbb{P}[U\in [0,t]\setminus \cup_{(m,n)\in I(k,j)}(t-T(m,n)-t(m,n),t-T(m,n))]\\
        \;=& \frac{1}{t}\inf_{(k,j)} \mathbb{E}[\text{Leb}([0,t]\setminus \cup_{(m,n)\in I(k,j)}(t-T(m,n)-t(m,n),t-T(m,n)))]\\
        \;=& \frac{1}{t} \mathbb{E}[\text{Leb}([0,t]\setminus \cup_{m\ge 1,n\in\mathbb{Z}}(t-T(m,n)-t(m,n),t-T(m,n)))]
        =0.
    \end{align*}
    By the absolute continuity, 
    \[
        \mathbb{P}[t\in [0,\infty)\setminus \cup_{m\le 1, n\in\mathbb{Z}}(T(m,n),T(m,n)+t(m,n))]=0.
    \]
    Thus, a.s. we can pick $\epsilon>0$ such that $Z_s=Z_t$ for all $s\in(t-\epsilon, t+\epsilon)$.
\end{proof}

\begin{proof}[Proof of \ref{HD = R}$\Rightarrow$\ref{EQ} in Theorem \ref{main}]
    Let $\mu_X$ and $\mu_Z$ be the laws of $X^\infty|_{[\tau_\infty,\infty)}$ and $Z|_{[0,\infty)}$ on the metric space $(M,d)$ defined in Definition \ref{metric}. Fix $\epsilon > 0$. By Lemma \ref{d(Z,Z^n)}, choose $N$ such that  
    \[
        \mathbb{E}^\infty_\infty[d(X^\infty,X^N)] < \epsilon^2/8 \quad \text{and} \quad \mathbb{E}^\mathrm{RI}[d(Z,Z^N)] < \epsilon^2/8.
    \]  
    Under \ref{HD = R} and \eqref{hy&cap Xn}, Lemma \ref{X^n=Z^n} gives a coupling $\mathbb{P}^N$ with $d(X^N, Z^N) = 0$ a.s. Then by Markov's inequality,  
    \begin{align*}
        \mathbb{P}^N[d(X^\infty,Z) \ge \epsilon] 
        \le \mathbb{P}^\infty[d(X^\infty,X^N) \ge \frac{\epsilon}{2}] +\mathbb{P}^\mathrm{RI}[d(Z,Z^N) \ge \frac{\epsilon}{2}] 
        \le\frac{\mathbb{E}^\infty_\infty[d(X^\infty,X^N)] + \mathbb{E}^\mathrm{RI}[d(Z,Z^N)]}{\epsilon/2} \le \epsilon.
    \end{align*}
    For any Borel set $A \subset M$, let $A_\epsilon =  \{f \in M :\ \exists\ a \in A \text{ such that } d(a,f)<\epsilon\}$. Then  
    \[
        \mu_Z(A_\epsilon) + \epsilon \ge \mu_X(A) \quad \text{and} \quad \mu_X(A_\epsilon) + \epsilon \ge \mu_Z(A),
    \]  
    Since $\epsilon$ is arbitrary, the Prokhorov distance $\pi(\mu_X, \mu_Z) = 0$. Hence $\mu_X = \mu_Z$, which implies \ref{EQ}. 
\end{proof}

\subsection{Proof of Corollary \ref{SM}}

\begin{proof}[Proof of Corollary \ref{SM}]
    Since the graph satisfies conditions in Theorem \ref{main}, $\left\{X_t^\infty\right\}_{t\ge0}$ under $\mathbb{P}^\infty_\infty$ and $\left\{Z_t\right\}_{t\ge0}$ under $\mathbb{P}^\mathrm{RI}$ are identically distributed. By Lemma \ref{Z SM}, $\{Z_t\}$ has the strong Markov property, so $\left\{X_t^\infty\right\}_{t\ge0}$ under $\mathbb{P}^\infty_\infty$ has the strong Markov property. 
    
    Additionally, for RW$^\infty$ started from $z\in\mathcal{VG}$, by Lemma \ref{strong Markov}, $\left\{X_t^\infty\right\}_{t>\tau_\infty}$ is independent of $\left\{X_t^\infty\right\}_{t<\tau_\infty}$ and $z$. By Lemma \ref{VG SM}, RW$^\infty$ started from $z\in\mathcal{VG}$ has the strong Markov property. 
\end{proof}

\section{Examples}
In this section we give examples for condition \ref{HD = R}. By Theorem \ref{main}, for any graph that satisfies \ref{HD = R}, RI and RW$^\infty$ are equivalent and have the strong Markov property. For any that does not, RI and RW$^\infty$ are not equivalent. 

\begin{defn}[Ends]
    For an infinite graph $\mathcal{G}$, we define an equivalence relation on all self-avoiding path on $\mathcal{G}$ as follows:  
    $(x_n)_{n\in\mathbb{N}_0}$ is equivalent to $(y_n)_{n\in\mathbb{N}_0}$ if and only if for any finite $V\subset\mathcal{VG}$, there exists a unique component of $\mathcal{VG}\setminus V$ that contains infinitely many $x_n$ and infinitely many $y_n$. The equivalence classes are called ends of graph $\mathcal{G}$. 
\end{defn}
\begin{rem}\label{end(x)}
In particular, Example \ref{Z^d+N} demonstrates that there exists graphs with multiple ends for which \ref{HD = R} holds, while Example \ref{Zd + Zd} demonstrates that \ref{HD = R} could fail for one-ended graphs. Therefore, the concept of ends is not an appropriate characterization of ``points at infinity'' in our setting. 
\end{rem}

\subsection{\texorpdfstring{$\mathbb{Z}^d$}{Zd} and its variants}

\begin{exmp}[$\mathbb{Z}^d$ with unit conductance]\label{Z^d}
    Consider lattice $\mathbb{Z}^d(d\ge 3)$ with the same conductance on each edge. Then random walk on this graph is transient. By Theorem 9.7 in \cite{lyons-peres}, consider boxes of radius $n$ centered at the origin, we have that currents are unique. By Theorem \ref{equis for F=W}, \ref{HD = R} stands. 
\end{exmp}

\begin{exmp}[a graph with multiple ends for which \ref{HD = R} holds]\label{Z^d+N}
    Consider the graph $\mathcal{G}$ obtained by identifying the origins of $\mathbb{N}_0$ and $\mathbb{Z}^d (d\ge 3)$ and equip the graph with unit conductance. Since the $\mathbb{N}_0$ part is recurrent, this graph is still transient. Consider any $f\in\mathbf{HD}(\mathcal{G})$, harmonicity implies $f$ must be an arithmetic sequence on $\mathbb{N}_0$, therefore finiteness of Dirichlet energy implies $f$ must be constant on $\mathbb{N}_0$. In this light, we can see that in fact $f|_{\mathbb{Z}^d}\in \mathbf{HD}(\mathbb{Z}^d)=\mathbb{R}$, so we get $f$ is the zero function. That is to say, \ref{HD = R} is satisfied. 
\end{exmp}

\begin{exmp}\label{question}
    We consider a ``ladder-like'' variant of the previous example. Consider the graph $\mathcal{G}$ obtained by adding edges and conductances of $c_k$ between $k\in\mathbb{N}_0$ and $(k,0,0,...,0)\in\mathbb{Z}^d$ for any $k=1,2,3,...$ to the graph in the previous example, where the value of $c_k$ is to be determined. For random walk started from a non-origin vertex in $\mathbb{Z}^d$, there is positive probability that it never hit the origin, so this graph is transient. 
    By Corollary 1.2 in \cite{carmesin2012characterization}, we can take $S$ in the corollary to be the set of edges that we just added, and then $\sum_{h=1}^{\infty} c_h<\infty$ satisfies the corollary condition. Since the previous example satisfies \ref{HD = R}, we know that $(\mathcal{G},\mathfrak{c})$ satisfies \ref{HD = R}. 
\end{exmp}

\subsection{Other examples}

\begin{exmp}
    Consider $\mathcal{G}\times A$, where $(\mathcal{G},\mathfrak{c}_1)$ is locally finite, transient, and satisfies \ref{HD = R}, and $A$ is a finite graph equipped with any conductance function $\mathfrak{c}_2$. The edge set is defined as
    \[
        E:=\{[(x,y),(x',y')]\mid x=x' \text{ and } y\sim y' \text{ in } A \text{ or } y=y' \text{ and } x\sim x' \text{ in } \mathcal{G} \},
    \]
    and the conductance function is defined as
    \[
        \mathfrak{c}([(x,y),(x',y')]):=
        \begin{cases}
             \mathfrak{c}_2(y,y'),&\quad \text{if }x=x'\\
             \mathfrak{c}_1(x,x'),&\quad \text{if }y=y'.
        \end{cases}
    \]
    By Exercise 9.45(a) of \cite{lyons-peres}, $(\mathcal{G},\mathfrak{c}_1 )$ satisfies \ref{HD = R} yields $(\mathcal{G}\times A,\mathfrak{c})$ satisfies \ref{HD = R}. 

    As a consequence, if random walk on the Cayley graph of a finitely generated abelian group $H$ is transient, this Cayley graph equipped with unit conductance satisfies \ref{HD = R}. The reason is as follows. By the Fundamental Theorem of Finitely Generated Abelian Groups (see, for example, Section 5.2 of \cite{dummit2004abstract}), $H$ can be represented as $\mathbb{Z}^d\times A$ for some $d\ge 0$ and $A$ finite with unit conductance. Transience and coarse-graining ensure that $d\ge 3$. By Example \ref{Z^d}, we can apply the conclusion of the previous paragraph. 
\end{exmp}

\begin{exmp}
    \ref{HD = R} holds for many Cayley graphs (with unit conductance). 
    
    (i) Construct a Cayley graph with a finite generating set $S$ of an infinite discrete Kazhdan group such that $S^{-1}=S$. Such graphs exists because by Theorem 1.3.1 in \cite{bekka2007kazhdan}), any discrete Kazhdan group is finitely generated. 
    By \cite{bekka1997group}, the Cayley graph of any infinite, finitely generated Kazhdan group satisfies \ref{HD = R}. 
   In particular, \ref{HD = R} holds for $SL_n(\mathbb{Z})(n\ge 3)$. 
    
    (ii) Define $x$ and $y$ as below and consider a Cayley graph of the discrete Heisenberg group $H_3(\mathbb{Z})$ with generating set $\{x,y,x^{-1},y^{-1}\}$. 
    \[
        {\displaystyle x={\begin{pmatrix}1&1&0\\0&1&0\\0&0&1\\\end{pmatrix}},\ \ y={\begin{pmatrix}1&0&0\\0&1&1\\0&0&1\\\end{pmatrix}},\ \ } 
        {\displaystyle z={\begin{pmatrix}1&0&1\\0&1&0\\0&0&1\\\end{pmatrix}}=xyx^{-1}y^{-1}}.
    \]
    Consider the center $\{kz\mid k\in\mathbb{Z}\}\cong\mathbb{Z}$, this is an infinite, normal, infinite-index subgroup that is finitely generated, so \ref{HD = R} is satisfied (see Theorem 6.8 of \cite{Gaboriau2002}). 
\end{exmp}

\begin{defn}
    An automorphism on $(\mathcal{G},\mathfrak{c})$ is a bijection $\phi:\ \mathcal{VG}\to\mathcal{VG}$ such that 
    \[
        \forall x,y\in\mathcal{VG}, p(x,y)=p(\phi(x),\phi(y)). 
    \]
    
    $(\mathcal{G},\mathfrak{c})$ is said to be transitive if $\forall x,y\in \mathcal{VG}$, there exists an automorphism that maps $y$ to $x$. $(\mathcal{G},\mathfrak{c})$ is said to be quasi-transitive if it has only finitely many orbits for the action of the automorphism group on $\mathcal{VG}$. 
\end{defn}

\begin{exmp}
    Consider any amenable transient graph $\mathcal{G}$. 
    If it is any of the following cases, \ref{HD = R} holds: (i) transitive; (ii) quasi-transitive and equipped with unit conductance; (iii) quasi-transitive and has bounded degrees.
    By Theorem 9 in \cite{elek1998quasi}, \ref{HD = R} is satisfied for (i). By Theorem \ref{equis for F=W} and Exercise 10.11 in \cite{lyons-peres}, \ref{HD = R} is satisfied for (ii). Based on (ii), by Theorem 9.9 in \cite{lyons-peres}, \ref{HD = R} is satisfied for (iii). 
\end{exmp}

\subsection{Examples that fail the condition of Theorem \ref{main}}

\begin{exmp}
    All transient trees do not satisfy \ref{EQ} (see Exercise 9.35 in \cite{lyons-peres}). 
    
    More generally, \ref{EQ} fails on any transient planar graph with bounded $\pi(\cdot)$. Recall the stationary measure $\pi(\cdot)$ is defined as $\pi(x)=\sum_{y\sim x}\mathfrak{c}(x,y)$. By Theorem \ref{equis for F=W} and Theorem 9.12 of \cite{lyons-peres}, \ref{HD = R} is not satisfied. Since we have Theorem \ref{main}, \ref{EQ} fails. 
\end{exmp}

\begin{exmp}[\ref{HD = R} could fail for one-ended graphs]\label{Zd + Zd}
    Take $d\ge 3$ so that $\mathbb{Z}^d$ is transient. Let the vertex set $\mathcal{VG}=\mathbb{Z}^d\times\{0,1\}$. The edge set $\mathcal{EG}$ is defined as
    \[
        \mathcal{EG}:=\{[(x,y),(x',y')]\mid x=x' \text{ and } y\ne y' \text{ in } \{0,1\} \text{ or } y=y' \text{ and } x\sim x' \text{ in } \mathbb{Z}^d \}.
    \]
    Then for any finite set $A$, $\mathcal{G}\setminus A$ has only one component, so $\mathcal{G}$ is one-ended. 
    For $x\in\mathbb{Z}^d$, let $|x|$ be the graph distance on $\mathbb{Z}^d$ between $x$ and the origin. The conductance function is defined as
    \[
        \forall (x,y)\sim(x',y'),\quad \mathfrak{c}([(x,y),(x',y')]):=
        \begin{cases}
             (|x|+1)^{-(d+1)},&\quad \text{if }x=x'\\
             1,&\quad \text{if }y=y'.
        \end{cases}
    \]

    Define the cut set between $\mathbb{Z}^d\times\{0\}$ and $\mathbb{Z}^d\times\{1\}$ by
    \[
        F=\{[(x,y),(x',y')])\mid x=x'\}.
    \]
    Then
    \[
        \sum_{e\in F}\mathfrak{c}(e)\le 1+\sum_{n\ge 1}\sum_{x:|x|=n}n^{-(d+1)}
        \le 1+\sum_{n\ge 1} C_d n^{d-1}n^{-(d+1)}\le 1+C_d\sum_{n\ge 1} n^{-2}<\infty.
    \]
    
    By Corollary 1.3 from \cite{carmesin2012characterization}, \ref{HD = R} fails. 
\end{exmp}

\section*{Acknowledgments} 
This project was initiated as part of the UChicago REU program. The author would like to thank Ewain Gwynne for introducing this problem, Peter May for organizing the REU, and is also grateful to Ewain Gwynne, Xinyi Li, Yuval Peres and Wenyuan Yang for many stimulating discussions.
\bibliographystyle{plain}
\bibliography{RI_and_RWinfty}

\end{document}